\theoremstyle{plain}
\newtheorem{theorem}{Theorem}[section]
\newtheorem{lemma}[theorem]{Lemma}
\newtheorem{proposition}[theorem]{Proposition}
\theoremstyle{definition}
\newtheorem{definition}[theorem]{Definition}
\theoremstyle{remark}
\newtheorem{remark}[theorem]{Remark}
\theoremstyle{conjecture}
\newtheorem{conjecture}[theorem]{Conjecture}
\newcommand{\kerz}{\operatorname{\ker_{\mathbb Z}}}
\newcommand{\swapj}[1]{\stackrel{#1}{\leftrightarrow}}
\newcommand{\swapoperation}[1]{\stackrel{#1}{\longleftrightarrow}}
\newcommand{\rank}{\operatorname{rank}}
\newcommand{\N}{{\mathbb N}}
\newcommand{\Z}{{\mathbb Z}}
\newcommand{\cF}{{\mathcal F}}
\newcommand{\cI}{{\mathcal I}}
\newcommand{\cP}{{\mathcal P}}
\newcommand{\cS}{{\mathcal S}}
\newcommand{\idp}{i^{\prime\prime}}
\newcommand{\kdp}{k^{\prime\prime}}
\newcommand{\kpr}{k_{\mathrm{pr}}}
\newcommand{\kim}{k_{\mathrm{im}}}
\bmdefine{\Ba}{a}
\bmdefine{\Bb}{b}
\bmdefine{\Bc}{c}
\bmdefine{\Be}{e}
\bmdefine{\Bg}{g}
\bmdefine{\Br}{r}
\bmdefine{\Bt}{t}
\bmdefine{\Bw}{w}
\bmdefine{\Bx}{x}
\bmdefine{\By}{y}
\bmdefine{\Bz}{z}
\newcommand{\bmb}{{\bm b}}
\newcommand{\weight}{v}
\newcommand{\fbsz}[1]{\nu(#1)}
\begin{document}

\title{Markov degree of configurations defined by fibers of a configuration}

\author{
Takayuki Koyama\thanks{Graduate School of Information Science and Technology, University of Tokyo}, \ 
Mitsunori Ogawa\footnotemark[1] \ 
and Akimichi Takemura\footnotemark[1]\ %
}
\date{July, 2014}
\maketitle

\begin{abstract}
We consider a series of configurations defined by fibers of a given base configuration.  We prove
that Markov degree of the configurations is bounded from above by the Markov complexity of the
base configuration.  
As important examples of base configurations we consider incidence matrices of graphs and 
study the maximum Markov degree of configurations defined by fibers of the incidence matrices.
In particular we give a proof that the Markov degree for two-way transportation polytopes is three.
\end{abstract}

\noindent
{\it Keywords and phrases:} \ 
algebraic statistics,
Markov basis,
transportation polytopes

\section{Introduction}
\label{sec:intro}
The study of Markov bases has been developing rapidly since
the seminal paper of Diaconis and Sturmfels (\cite{diaconis1998algebraic}), which established the equivalence of 
a Markov basis for a discrete exponential model in statistics
and a generating set of a corresponding toric ideal.
See \cite{aoki2012markov}, \cite{drton-sturmfels-sullivant-lecture} and \cite{hibi_book_13} 
for terminology of algebraic statistics and toric ideals used in this paper.

When we study Markov bases for a specific problem, usually we are not faced with a single
configuration, but rather with a series of configurations, possibly parameterized by a few
parameters. For example, Markov bases associated with complete bipartite graphs $K_{I,J}$ 
(in statistical terms, independence model of $I\times J$ two-way contingency tables) are parameterized 
by $I$ and $J$.  In this case, Markov bases consist of moves of degree two
irrespective of $I$ and $J$.  In more general cases, some measure of complexity of Markov bases
grows with the parameter and we are interested in bounding the growth.

There are some typical procedures to generate a series of configurations based on a given set of configurations.
Perhaps the most important construction is the higher Lawrence lifting of a configuration, for which 
Santos and Sturmfels  (\cite{santos2003higher}) described the growth by the notion of Graver complexity.
Another important  construction is the nested configuration (\cite{ohsugi2009toric}), where generated
series of configurations basically inherit nice properties of original configurations.
In this paper we define a new procedure to generate a series of configurations using 
fibers of a given  configuration, which we call the {\em base} configuration.  
This  construction is closely related to the higher
Lawrence lifting of the base configuration and using this fact we 
prove that {\em Markov degree} of the configurations is bounded from above by the {\em Markov complexity} of the
base configuration.  

There are some nice problems, such as the
complete bipartite graphs,  where the moves  of degree two forms a Markov basis.  
When a minimal Markov basis contains a move of degree three or higher, 
it is usually very hard to control measures of complexity of Markov bases.
A notable exception is the conjecture by \cite{diaconis2006markov} that the Markov degree associated with the Birkhoff polytope is three, i.e., the toric ideal associated with the Birkhoff polytope is generated by
binomials of degree at most three. This conjecture was proved in \cite{YamaguchiOgawaTakemura}.
In view of \cite{haase2009poly} and \cite{YamaguchiOgawaTakemura}, 
Christian Haase (personal communication, 2013) suggested that the Markov degree associated with two-way transportation polytopes 
and flow polytopes is three.
Very recently Domokos and  Jo\'o (\cite{domokos-joo}) gave a proof of this general conjecture.
Adapting the arguments in \cite{YamaguchiOgawaTakemura},
we give a proof that the Markov degree associated with two-way transportation polytopes 
is three in Section \ref{subsec:MD-for-two-way}.
Two-way transportation polytopes are important examples in our framework,
since  they are fibers of the incidence matrix of a complete bipartite graph.

The organization of this paper is as follows. 
In  Section \ref{sec:main} we set up the framework of this paper and
prove the main theorem that the Markov degree of the configurations
defined by fibers of a base configuration is 
bounded from above by the Markov complexity of the base configuration.  
In the remaining sections of this paper we investigate 
the maximum Markov degree and the Markov complexity
of some important base configurations.
In Section \ref{sec:complete-graphs} we study incidence matrices of complete graphs and
in Section \ref{sec:complete-bipartite-graphs} we study those of complete bipartite
graphs as base configurations.
We end the paper with some discussions in Section \ref{sec:discussion}.

\section{Main result}
\label{sec:main}

Let $A$ be a $d\times n$ configuration matrix.
Elements of the integer kernel $\kerz A$ of $A$ are called {\em moves} for $A$.
As in Section 1.5.1  of \cite{hibi_book_13} we assume that
there exists a $d$-dimensional row vector ${\bm \weight}$ such that ${\bm \weight}A = (1,1,\dots,1)$.
Let $\N=\{0,1,2,\dots\}$ denote the set of non-negative integers and let
$\N A = \{ A\Bx \mid \Bx\in \N^n\}$. For $\Bb \in \N A$
\[
\cF_{A,\Bb}=\{ \Bx\in \N^n \mid A\Bx=\Bb\}
\]
is the $\Bb$-fiber of $A$.  
Each fiber $\cF_{A,\Bb}$ is a finite set and non-empty for $\Bb \in \N A$. We denote the size of $\cF_{A,\Bb}$ by $\fbsz{\Bb}=|\cF_{A,\Bb}|$.
Hence with an appropriate term order the elements of $\cF_{A,\Bb}$ are enumerated as
\[
\cF_{A,\Bb}=\{ \Bx_1, \dots, \Bx_{\fbsz{\Bb}} \}.
\]
We look at $\Bx_i$, $i=1,\dots,\fbsz{\Bb}$, as $n$-dimensional column vectors and we define
an $n\times \fbsz{\Bb}$ matrix as
\[
A_\Bb = (\Bx_1, \dots, \Bx_{\fbsz{\Bb}}).
\]
Note that $\Bx\in \cF_{A,\Bb}$ implies 
\[{\bm \weight} \Bb = {\bm \weight} A\Bx= |\Bx|=x_1 + \dots + x_n > 0
\]
if 
$\Bx\neq 0$. Hence for $\tilde {\bm \weight}={\bm \weight}A/({\bm \weight}\Bb)$
\begin{equation}
\label{eq:Ab-configuration}
\tilde {\bm \weight} A_\Bb = (1,1,\dots,1),
\end{equation}
and $A_\Bb$ is a configuration.

Consider the set of moves for $A_\Bb$ of degree at most $m$.  The {\em Markov degree} ${\rm MD}(A_\Bb)$ of $A_\Bb$
is the minimum value of $m$ such that the moves of degree at most $m$ form a Markov basis (cf.\ \cite{YamaguchiOgawaTakemura}, \cite{haws-etal}).  
We are interested in the maximum of ${\rm MD}(A_\Bb)$ when $\Bb$ ranges over $\N A$:
\[
\max_{\Bb\in \N A} {\rm MD}(A_\Bb).
\]

Let $A^{(N)}$ denote the $N$-th Lawrence lifting of $A$ (cf.\ \cite{santos2003higher}).
The moves for $A^{(N)}$ are written as $\Bz=(\Bz_1, \dots \Bz_N)$, such that $\sum_{k=1}^N \Bz_k=0$ 
and $\Bz_k\in \kerz A$, $k=1,\dots,N$.  In this paper, we call $\Bz_k$ the $k$-th  {\em layer}
or {\em slice} of $\Bz$. The {\em type} of $\Bz$ is the number of non-zero layers
among $\Bz_1, \dots, \Bz_N$:
\[
{\rm type}(\Bz)= | \{ k \mid \Bz_k \neq 0 \} |.
\]
Let ${\mathcal G}\left(A^{(N)}\right)$ denote the Graver basis of $A^{(N)}$.  Then
the {\em Graver complexity} of $A$ is 
defined (cf.\ \cite{santos2003higher}, \cite{berstein09}, \cite{kudo-takemura}) as
\begin{equation*}
{\rm GC}(A)=\sup\left(\{0\}\cup\Set{{\rm type}(x) | x\in
\bigcup_{N\geq 1}{\mathcal G}\left(A^{(N)}\right)}\right),
\end{equation*}
where $\{0\}$ is needed for the case that the columns of $A$ are linearly independent.
Santos and Sturmfels (\cite{santos2003higher}) gave an explicit expression for the Graver complexity,
which we will use for computing the Graver complexity of some configurations.
The {\em Markov complexity} ${\rm MC}(A)$ of $A$ is defined as
the minimum value of $m$ such that the moves of type at most $m$ form a Markov basis
for every $A^{(N)}$.
Note that ${\rm MC}(A)\le {\rm GC}(A)$
since a minimal Markov basis is contained in the Graver basis.

Now we are ready to state our main theorem.

\begin{theorem} \label{thm:main}
The Markov degree of $A_\Bb$ is bounded from above by the Markov complexity of $A$:
\begin{equation}
\label{eq:main-result}
\max_{\Bb\in \N A} {\rm MD}(A_\Bb)  \le {\rm MC}(A).
\end{equation}
\end{theorem}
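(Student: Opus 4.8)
The plan is to exploit the relationship, announced in the introduction, between fibers of $A_\Bb$ and fibers of the higher Lawrence lifting $A^{(N)}$, and to push a Markov basis of $A^{(N)}$ down to $A_\Bb$. First I would describe the fibers of $A_\Bb$ concretely. A point $\By\in\N^{\fbsz{\Bb}}$ records, for each fiber element $\Bx_i\in\cF_{A,\Bb}$, how many copies of $\Bx_i$ are taken, so $A_\Bb\By=\sum_i y_i\Bx_i\in\N^n$ and, by \eqref{eq:Ab-configuration}, its degree is $|\By|$. Thus two points $\By,\By'$ lie in the same fiber of $A_\Bb$ precisely when $|\By|=|\By'|=:N$ and $A_\Bb\By=A_\Bb\By'=:\Bc$; that is, the two size-$N$ multisets of fiber elements they encode add up to the same vector $\Bc$.

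Next I would lift such a pair to $A^{(N)}$. Listing the multiset encoded by $\By$ with repetition as $\Bx_{i_1},\dots,\Bx_{i_N}$, and that of $\By'$ as $\Bx_{j_1},\dots,\Bx_{j_N}$, I form the two points $\Bw^{(0)}=(\Bx_{i_1},\dots,\Bx_{i_N})$ and $\Bw^{(L)}=(\Bx_{j_1},\dots,\Bx_{j_N})$ of $\N^{Nn}$. Each of their layers is a fiber element, so $A\Bx_{i_k}=A\Bx_{j_k}=\Bb$, while the layers sum to $\Bc$ in both cases; hence applying $A^{(N)}$ to either point gives $(\Bb,\dots,\Bb,\Bc)$, so $\Bw^{(0)}$ and $\Bw^{(L)}$ lie in a common fiber of $A^{(N)}$. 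By the definition of ${\rm MC}(A)$, the moves of $A^{(N)}$ of type at most ${\rm MC}(A)$ form a Markov basis, so there is a path $\Bw^{(0)}\to\Bw^{(1)}\to\cdots\to\Bw^{(L)}$ in $\N^{Nn}$ whose steps $\Bz^{(j)}=\Bw^{(j)}-\Bw^{(j-1)}$ are such moves; in particular $\sum_k\Bz^{(j)}_k=0$ and each layer $\Bz^{(j)}_k\in\kerz A$, and at most ${\rm MC}(A)$ of these layers are nonzero.

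Then I would descend the path to $A_\Bb$ by forgetting the ordering of the layers. For any $(\Ba_1,\dots,\Ba_N)\in\N^{Nn}$ whose layers all lie in $\cF_{A,\Bb}$, set $\phi(\Ba_1,\dots,\Ba_N)=\sum_{k=1}^N\Be_{\iota(\Ba_k)}\in\N^{\fbsz{\Bb}}$, where $\iota(\Ba_k)$ denotes the index of $\Ba_k$ among $\Bx_1,\dots,\Bx_{\fbsz{\Bb}}$ and $\Be_1,\dots,\Be_{\fbsz{\Bb}}$ are the unit vectors. Every intermediate point $\Bw^{(j)}$ is admissible for $\phi$: its layers are nonnegative because the path stays in $\N^{Nn}$, and $A\Bw^{(j)}_k=A\Bw^{(0)}_k=\Bb$ since each $\Bz^{(l)}_k\in\kerz A$, so every layer is again an element of $\cF_{A,\Bb}$. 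Consequently $\phi(\Bw^{(0)})=\By$, $\phi(\Bw^{(L)})=\By'$, every $\phi(\Bw^{(j)})$ lies in the $\Bc$-fiber of $A_\Bb$, and the path descends to a walk $\By\to\cdots\to\By'$ inside that fiber.

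Finally I would bound the degree of each descended step. For a layer $k$ with $\Bz^{(j)}_k\neq0$ the values $\Bw^{(j-1)}_k$ and $\Bw^{(j)}_k$ are distinct fiber elements, contributing $\Be_{\iota(\Bw^{(j)}_k)}-\Be_{\iota(\Bw^{(j-1)}_k)}$ to $\phi(\Bw^{(j)})-\phi(\Bw^{(j-1)})$, while unchanged layers contribute nothing; hence the positive part of this $A_\Bb$-move has size at most the number of changed layers, i.e.\ at most ${\rm MC}(A)$. Since by \eqref{eq:Ab-configuration} the degree of a move for $A_\Bb$ is the size of its positive part, each descended step is a move of degree at most ${\rm MC}(A)$. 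Therefore $\By$ and $\By'$ are connected within their fiber by moves of degree at most ${\rm MC}(A)$; as $\By,\By'$ and $\Bb$ were arbitrary, these moves form a Markov basis of $A_\Bb$ for every $\Bb\in\N A$, giving \eqref{eq:main-result}. I expect the delicate step to be this last descent: one must check that a type-$t$ move of $A^{(N)}$ really produces a degree-$\le t$ move of $A_\Bb$ and that every intermediate point stays nonnegative and in the correct fiber, so that the connectivity furnished upstairs by ${\rm MC}(A)$ survives passage through the many-to-one map $\phi$.
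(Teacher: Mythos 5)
Your proposal is correct and follows essentially the same route as the paper: you lift the two points of a fiber of $A_\Bb$ to the fiber of $A^{(N)}$ over $(\Bb^{(N)},\Bc)$, connect them there by moves of type at most ${\rm MC}(A)$, and push the path back down through the layer-forgetting map (your $\phi$ is exactly the paper's $f_\bmb$), bounding each descended move's degree by the number of changed layers. The verification that intermediate points stay non-negative and in the correct fiber, which you flag as the delicate step, is handled in the paper by the same observation that each layer remains in $\cF_{A,\Bb}$ along the path.
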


Before giving a proof, we discuss how a fiber of $A_\Bb$ is embedded in a fiber of
some $A^{(N)}$.  For $\Bc \in \N A_\Bb$ consider an element $\By=(y_1,\dots,y_{\fbsz{\Bb}})$
of $\cF_{A_\Bb, \Bc}$. By
\[
\Bc = A_\Bb \By = \Bx_1 y_1 + \dots + \Bx_{\fbsz{\Bb}} y_{\fbsz{\Bb}}
\]
and by \eqref{eq:Ab-configuration},  we see that
$
|\By|=y_1 + \dots + y_{\fbsz{\Bb}}=\tilde {\bm \weight} \Bc
$
is common for all $\By\in \cF_{A_\Bb, \Bc}$. Let $N= |\By|, \By\in \cF_{A_\Bb, \Bc}$.
Then $\By \in \cF_{A_\Bb, \Bc}$ is identified with a multiset $\{ \Bw_1, \dots, \Bw_N\}$
of elements (columns) of $A_\Bb$, where $\Bx_i$ is repeated $y_i$ times, e.g.:
\begin{equation*}
\Bx_1 = \Bw_1= \dots = \Bw_{y_1}, \ \ \Bx_2 = \Bw_{y_1+1}= \dots = \Bw_{y_1+y_2},\  \dots
\end{equation*}
In this notation 
\begin{equation}
\label{eq:Ab-fiber}
\Bw_k\in \cF_{A,\Bb}, \ k=1,\dots,N, \ \  \text{and}  \ \ 
\Bc = \Bw_1 + \dots + \Bw_N.
\end{equation}

Define a $(dN+n)$-dimensional integer vector $(\Bb^{(N)},\Bc)$ as
\begin{equation}
\label{eq:brc}
(\Bb^{(N)},\Bc) = \begin{pmatrix} \Bb \\ \vdots \\ \Bb \\ \Bc
\end{pmatrix},
\end{equation}
where $\Bb$ is repeated $N$ times on the right-hand side. For $(\Bb^{(N)},\Bc)\in \N A^{(N)}$, an 
element of the fiber $\cF_{A^{(N)}, (\Bb^{(N)},\Bc)}$ of $A^{(N)}$ 
is written as $\Bw=(\Bw_1,\dots,\Bw_N)$, where 
$\Bw_k \in \cF_{A,\Bb}, k=1,\dots,N$,  and $\Bw_1 + \dots + \Bw_N = \Bc$. This is the same
as \eqref{eq:Ab-fiber}.  Hence any element of the fiber $\cF_{A_\Bb, \Bc}$ of $A_\Bb$ corresponds
to an element of the fiber $\cF_{A^{(N)}, (\Bb^{(N)},\Bc)}$ of $A^{(N)}$. This correspondence
between $\cF_{A_\Bb,\Bc}$ and $\cF_{A^{(N)}, (\Bb^{(N)},\Bc)}$  is one-to-one except
for the permutation of vectors $\Bw_1,\dots,\Bw_N$.  Note that the same $N$ 
$\Bb$'s on the right-hand side of \eqref{eq:brc} may be different for general fibers of $A^{(N)}$.
Hence the set of fibers $\N A_\Bb$ for $A_\Bb$ is a subset of the set of fibers
$\cup_{N\ge 1} \N A^{(N)}$. As discussed in \cite{hara-takemura-yoshida-logistic}, Markov bases
for a subset of fibers may be smaller than the full Markov bases. This fact is reflected in the
inequality in \eqref{eq:main-result}.

Now we give a proof of Theorem \ref{thm:main}.
\begin{proof}
Define a map 
$f_\bmb:\mathcal F_{A^{(N)},(\bmb^{(N)},\bm c)}\rightarrow\mathcal F_{A_\bmb,\bm c}$ by 
\begin{align*}
f_\bmb(\bm w)=\bm y=(y_1,\dots,y_{\fbsz{\Bb}}), %
\quad y_i = | \{k \mid \Bw_k = \Bx_i \}|.
\end{align*}
Then $f_\bmb$ is a surjection and furthermore 
\[
f_\bmb(\bm w)=\sum_{k=1}^N f_\bmb((\bm 0,\dots,\bm 0,\bm w_k,\bm 0,\dots,\bm 0)) = 
\sum_{k=1}^N (0,\dots,0,\underset{\scriptsize i:\bm x_i = \bm w_k}{1}, 0,\dots,0).
\]
For any $\bm y^{(s)}, \bm y^{(t)}\in\mathcal F_{A_\bmb,\bm c}$ we choose
\begin{align*}
\bm w^{(s)}\in f_\bmb^{-1}(\bm y^{(s)}), \quad \bm w^{(t)}\in f_\bmb^{-1}(\bm y^{(t)})
\end{align*}
and we connect  $\bm w^{(s)}$ and $\bm w^{(t)}$ by a Markov basis consisting of moves of type at most ${\rm MC}(A)$ of $A^{(N)}$.
Denote the path from $\bm w^{(s)}$ to $\bm w^{(t)}$ in $\cF_{A^{(N)},(\bmb^{(N)},\bm c)}$ as
\begin{align*}
\bm w^{(s)}=\bm w^{(0)}\rightarrow\bm w^{(1)}\rightarrow\cdots\rightarrow\bm w^{(T)}=\bm w^{(t)}.
\end{align*}
Let $\bm y^{(l)}=f_\bmb(\bm w^{(l)})$, $l=0,1\dots,T$.
Then 
\begin{align*}
A_\bmb \bm y^{(l)}=\sum_{i=1}^{\fbsz{\Bb}}y^{(l)}_i\bm x_i=\bm w_1^{(l)}+\cdots+\bm w_N^{(l)}=\bm c
\end{align*}
and %
$\bm y^{(l)}\in \mathcal F_{A_\bmb,\bm c}$. Hence 
$\bm y^{(l+1)}-\bm y^{(l)}$ is a move for  $A_\bmb$.  Its degree is bounded as
\begin{align*}
\frac{1}{2} \, |\bm y^{(l+1)}-\bm y^{(l)}|
&= \frac{1}{2} \, |f_\bmb(\bm w^{(l+1)})-f_\bmb(\bm w^{(l)})| \nonumber \\
&= \frac{1}{2} \, |\sum_{k:\bm w^{(l+1)}_k\neq\bm w^{(l)}_k}f_\bmb((\bm 0,\dots,\bm 0,\bm w^{(l+1)}_k,\bm 0,\dots,\bm 0))-f_\bmb((\bm 0,\dots,\bm 0,\bm w^{(l)}_k,\bm 0,\dots,\bm 0))| \nonumber
  \\
&\leq \frac{1}{2} \, \sum_{k:\bm w^{(l+1)}_k\neq\bm w^{(l)}_k}|f_\bmb((\bm 0,\dots,\bm 0,\bm w^{(l+1)}_k,\bm 0,\dots,\bm 0))-f_\bmb((\bm 0,\dots,\bm 0,\bm w^{(l)}_k,\bm 0,\dots,\bm 0))| \\
&= |\{ k \mid \bm w^{(l+1)}_k\neq \bm w^{(l)}_k\} | 
= {\rm type}(\bm w^{(l+1)}-\bm w^{(l)}) \\
&\le {\rm MC}(A).
\end{align*}
Thus $\bm y^{(s)}$ and $\bm y^{(t)}$
can be connected by moves of degree less than or equal to ${\rm MC}(A)$.
\end{proof}

In Theorem \ref{thm:main} an interesting question is when 
\eqref{eq:main-result} holds with equality.  At this point we give a simple but important example.
As the base configuration consider a $1\times n$ row vector $A=(1,1,\dots,1)$.  
Then for any positive integer $b$,
the fiber $A_b$ is the configuration of Veronese-type (Chapter 14 of \cite{sturmfels1996}), whose
Markov degree is two.  Hence $\max_{\Bb\in \N A} {\rm MD}(A_\Bb)=2$.  
On the other hand, $A^{(N)}$ is the configuration matrix of the 
complete bipartite graph $K_{n,N}$.  Since $A^{(N)}$, $N\ge 2$, has a Markov basis consisting of
moves of degree two, we have ${\rm MC}(A)=2$.  Hence the equality in \eqref{eq:main-result} holds
for this case.  Also note that  ${\rm GC}(A)=n$, since the elements of Graver basis corresponds
to cycles of $K_{n,N}$.

For bounding the Markov complexity ${\rm MC}(A)$ from below, we will find an indispensable move
for the higher Lawrence lifting $A^{(N)}$ of $A$.  The following proposition is useful for this purpose.
We use the notation $[N]=\{1,2,\dots,N\}$.

\begin{proposition}
\label{lem:indispensable-for-lawrence}
Let $\Bz=(\Bz_1,\dots,\Bz_N)$ be a move for $A^{(N)}$ such that
each slice $\Bz_k$ is a non-zero indispensable move for $A$.  Then
$\Bz$ is indispensable if and only if 
\[
\sum_{k\in M} \Bz_k \neq 0
\]
for every non-empty proper subset $M$ of $[N]$.
\end{proposition}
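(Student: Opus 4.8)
The plan is to reduce everything to the standard characterization that a move is indispensable precisely when its fiber consists of exactly its positive and negative parts and nothing else. Write $\Bz^+=(\Bz_1^+,\dots,\Bz_N^+)$ and $\Bz^-=(\Bz_1^-,\dots,\Bz_N^-)$ for the positive and negative parts of $\Bz$; since the $nN$ coordinates split into the $N$ layers, taking positive parts commutes with slicing, so these are indeed the layerwise positive and negative parts. Both $\Bz^+$ and $\Bz^-$ lie in one fiber $\cF_{A^{(N)},(\Bb^{(N)},\Bc)}$, and $\Bz$ is indispensable if and only if this fiber has no further element. Thus the entire argument comes down to describing this fiber explicitly.

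Next I would analyze an arbitrary $\Bw=(\Bw_1,\dots,\Bw_N)$ in that fiber. The Lawrence structure of $A^{(N)}$ imposes, for each layer, the constraint $A\Bw_k=A\Bz_k^+$, together with the single global constraint $\sum_{k=1}^N\Bw_k=\sum_{k=1}^N\Bz_k^+$. This is exactly where the hypothesis is used: because each slice $\Bz_k$ is a non-zero indispensable move for $A$, the fiber $\cF_{A,A\Bz_k^+}$ equals $\{\Bz_k^+,\Bz_k^-\}$, so the layer constraint pins $\Bw_k$ to one of the two values $\Bz_k^+$ or $\Bz_k^-$ for every $k$. Each fiber element is therefore encoded by the subset $S=\{k\in[N]\mid \Bw_k=\Bz_k^-\}$.

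It then remains to impose the global constraint. Substituting $\Bw_k=\Bz_k^-$ for $k\in S$ and $\Bw_k=\Bz_k^+$ for $k\notin S$ into $\sum_k\Bw_k=\sum_k\Bz_k^+$ and cancelling the common terms collapses the constraint to $\sum_{k\in S}\Bz_k=0$. Hence the fiber is in bijection with $\{S\subseteq[N]\mid \sum_{k\in S}\Bz_k=0\}$. The subsets $S=\emptyset$ and $S=[N]$ always belong to this set (the latter because $\Bz$ is a move for $A^{(N)}$, so $\sum_{k}\Bz_k=0$), and they correspond to $\Bz^+$ and $\Bz^-$ respectively; these are distinct since every slice is non-zero. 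Consequently the fiber reduces to exactly these two points if and only if no non-empty proper subset $M\subseteq[N]$ satisfies $\sum_{k\in M}\Bz_k=0$, which is the stated criterion, and both directions of the equivalence follow simultaneously.

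The only steps I would flag as non-routine are the two conceptual ones: invoking the two-element-fiber characterization of indispensability (which I would cite from the algebraic-statistics references already present), and justifying that each layer is forced into $\{\Bz_k^+,\Bz_k^-\}$. The latter is the crux, since without the indispensability of the slices the layer fibers could be strictly larger and the clean bijection with subsets of $[N]$ would break down. The rest, in particular the reduction of the global constraint to $\sum_{k\in S}\Bz_k=0$, is elementary bookkeeping.
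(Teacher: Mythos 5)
Your proposal is correct and follows essentially the same route as the paper's own proof: both reduce indispensability to the two-element-fiber criterion, use the indispensability of each slice to force every layer of a fiber element into $\{\Bz_k^+,\Bz_k^-\}$, and collapse the remaining global (sum) constraint to the condition $\sum_{k\in M}\Bz_k=0$ over the subset encoding the fiber element. The only cosmetic difference is that you index fiber elements by the set of layers equal to $\Bz_k^-$ while the paper uses the complementary set of layers equal to $\Bz_k^+$, which is equivalent since the total sum of slices vanishes.
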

\begin{proof}  Write $\Bz$ by its positive part and negative part as $\Bz=\Bz^+ - \Bz^-$ and
let $\Bb^{(N)}= A^{(N)} \Bz^+$. $\Bz$ is an indispensable move if and only if
${\mathcal F}_{A^{(N)},\Bb^{(N)}}=\{ \Bz^+, \Bz^-\}$ is a two-element set.
Also write each slice $\Bz_k$ as $\Bz_k = \Bz_k^+ - \Bz^-_k$ and 
let $\Bb_k = A \Bz_k^+$. 
We are assuming that ${\mathcal F}_{A,\Bb_k}=\{ \Bz_k^+, \Bz_k^-\}$
is a two-element set for each $k$. 
Let $\Bx=(\Bx_1, \dots, \Bx_N)\in {\mathcal F}_{A^{(N)},\Bb^{(N)}}$.
Then $A \Bx_k=\Bb_k$ for each $k$ and hence $\Bx_k$ is either $\Bz_k^+$ or $\Bz_k^-$.
Let $M=\{ k \mid \Bx_k = \Bz^+_k\}$. %
Then $\Bx$ is different from both $\Bz^+$ and $\Bz^-$ 
if and only if $M$ is a non-empty proper subset of $[N]$.  
Now $\sum_{k=1}^N \Bx_k = \sum_{k=1}^N \Bz_k^- \ = \Bc$ (say) implies
\begin{equation}
\label{eq:lawrence-indispensable}
0=\sum_{k=1}^N (\Bx_k- \Bz_k^-) = \sum_{k\in M} (\Bz_k^+ - \Bz_k^-)=\sum_{k\in M} \Bz_k.
\end{equation}
Hence $\Bz$ is indispensable if and only if \eqref{eq:lawrence-indispensable} hold only for
$M=\emptyset$ or $M=[N]$.
\end{proof}

Note that $\sum_{k\in M} \Bz_k=0$ if and only if $\sum_{k\in M^C} \Bz_k=0$ and
any slice $k$ is either in $M$ or in $M^C$. Hence in order to prove that $\Bz$ is indispensable,
we can start from arbitrary  slice $\Bz_k$ and show that any sum of slices including $k$
does not vanish except for the sum of all slices.

\section{Complete graphs as base configurations}
\label{sec:complete-graphs}

In this section we study the maximum Markov degree and
the Markov complexity when the base configuration $A$ is an
incidence matrix of a small complete graph without
self-loops (Section \ref{subsec:complete-graph}) or
with self-loops (Section \ref{subsec:with-loops}).

In $\Bb = A \Bx$, %
the elements of $\Bx$ are the non-negative integer weights of the edges and 
the elements of $\Bb$ are degrees of vertices, where the
degree of a vertex $v$ is the sum of weights of the edges having $v$ as an endpoint. 
Note that one self-loop $\{v,v\}$ gives two degrees to the vertex $v$.
  
In the following,  by $\Bg$ we denote
a graph with non-negative weights attached to the edges.
The elements of a fiber ${\mathcal F}_{A,\Bb}$
are the graphs $\Bg$ with the same degree sequence $\Bb$.
See Figure \ref{fig:2-2-2-examples} below for an example.

Elements of a fiber ${\mathcal F}_{A_\Bb,\Bc}$ can be identified with  multisets of graphs $\Bg$ 
such that the sum of weights of each edge is common.
A move of degree $k$ for the configuration $A_\Bb$ corresponds to replacing $k$ graphs
$\Bg_1, \dots, \Bg_k \in {\mathcal F}_{A,\Bb}$ with $\hat \Bg_1, \dots, \hat \Bg_k \in {\mathcal F}_{A,\Bb}$ such that the sum of weights of each edge is preserved.

\subsection{Complete graph on  four vertices without self-loops}
\label{subsec:complete-graph}

In this section we take the incidence matrix of the complete graph $K_4$ on four vertices 
without self-loops as the  base configuration $A$.   At the end of this section 
we give some comments on larger complete graphs.  In particular we present
a conjecture on $K_5$.

Let 
\begin{equation}
\label{eq:k4-noloop}
A=
\begin{pmatrix}
1&1&1&0&0&0\\
1&0&0&1&1&0\\
0&1&0&1&0&1\\
0&0&1&0&1&1
\end{pmatrix}.
\end{equation}
We prove that both sides of \eqref{eq:main-result} are two and the equality holds for this $A$.

\begin{theorem}
\label{thm:k4-noloop}
For $A$ in \eqref{eq:k4-noloop}
\[
\max_{\Bb\in \N A} {\rm MD}(A_\Bb)  = {\rm MC}(A) = 2.
\]
\end{theorem}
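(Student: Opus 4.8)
By Theorem~\ref{thm:main}, it suffices to prove the two bounds
\[
{\rm MC}(A)\le 2
\qquad\text{and}\qquad
\max_{\Bb\in\N A}{\rm MD}(A_\Bb)\ge 2,
\]
since the main theorem gives $\max_\Bb {\rm MD}(A_\Bb)\le {\rm MC}(A)$, and these two inequalities sandwich both quantities at the value $2$. The plan is to attack each bound separately. The lower bound $\max_\Bb {\rm MD}(A_\Bb)\ge 2$ should be the easy direction: I would exhibit a single degree $\Bb$ whose fiber $\cF_{A,\Bb}$ contains at least two distinct weighted graphs with the same degree sequence, so that $A_\Bb$ has more than one column and its toric ideal is nontrivial; any configuration with a nonzero move has Markov degree at least two, so a single well-chosen $\Bb$ (for instance the degree sequence realized both by a suitable $3$-cycle and by a different configuration of weighted edges) settles this.

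The real work is the upper bound ${\rm MC}(A)\le 2$, i.e.\ proving that for every $N$ the higher Lawrence lifting $A^{(N)}$ of the $K_4$ incidence matrix has a Markov basis consisting entirely of moves of type at most two. First I would set up the combinatorial picture established in the preamble to this section: an element of a fiber of $A^{(N)}$ is an $N$-tuple $(\Bw_1,\dots,\Bw_N)$ of weighted graphs on $K_4$, all sharing a fixed degree sequence $\Bb$ across slices, and with the edgewise sums $\Bw_1+\dots+\Bw_N$ fixed (these are exactly the marginal constraints imposed by $A^{(N)}$). A type two move swaps weight between two slices while preserving both each slice's degree sequence and the total edge sums. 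I would then argue that any two fiber elements can be connected by such swaps. The natural strategy is a local-move argument: given two tuples in the same $A^{(N)}$-fiber that are not yet equal, I would show there always exists a type two move reducing, say, the $\ell_1$ distance (or some nonnegative integer potential) between them, so that iterating reaches equality.

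The crux — and the step I expect to be the main obstacle — is producing that distance reducing type two move from the structure of $K_4$. On $K_4$ the moves for the base configuration $A$ itself are generated by the two cycle structures: the triangles and the single $4$-cycle, i.e.\ swaps of the form $e_{ij}+e_{kl}\leftrightarrow e_{ik}+e_{jl}$ around a $4$-cycle and analogous triangle relabelings. Concretely, if slices $\Bw_a$ and $\Bw_b$ differ from the corresponding slices of the target tuple, the edge deficits must cancel between them because the total edge sums agree; I would use this cancellation, together with the explicit cycle space of $K_4$, to locate a pair of slices and a cycle along which a compensating unit of weight can be transferred without violating any degree sequence. Making this selection always possible, while keeping all entries non-negative, is the delicate combinatorial core, and I would organize the verification by a case analysis on which edges carry the discrepancy and on the small cycle basis of $K_4$.

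Finally, I would remark that the argument as structured does not obviously extend to larger complete graphs, which is consistent with the conjectural status the authors flag for $K_5$; the explicit smallness of the cycle space of $K_4$ is what makes the type two connectivity argument go through, and this is where one should expect the phenomenon to break for larger graphs.
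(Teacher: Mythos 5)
Your overall architecture coincides with the paper's: use Theorem \ref{thm:main} to reduce everything to ${\rm MC}(A)=2$, dispose of the lower bound with a single witness $\Bb$, and prove ${\rm MC}(A)\le 2$ by an $\ell_1$ distance-reduction argument over the slices of $A^{(N)}$ using type-two swaps. However, the proposal stops exactly where the theorem's content lies: the step you defer as ``the delicate combinatorial core'' is the entire proof, and nothing in your sketch shows it can be carried out. The paper closes this gap with two structural lemmas about $K_4$. Lemma \ref{lem:lemma1-4v-noloop} states that every move $\Bz$ for $A$ takes \emph{equal} values on opposite edges, $z(ab)=z(cd)$, $z(ac)=z(bd)$, $z(ad)=z(bc)$; Lemma \ref{lem:lemma2-4v-noloop} states that any nonzero difference of two graphs in a fiber contains an alternating Hamiltonian $4$-cycle. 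These are what make the pairing of slices work: if $z_1(bc)<0$ on a discrepant slice, the zero-sum constraint gives another slice $k$ with $z_k(bc)>0$, and the opposite-edge locking of Lemma \ref{lem:lemma1-4v-noloop} forces $z_k(ad)>0$ \emph{on that same slice}, so a single $4$-cycle swap between slices $1$ and $k$ is simultaneously feasible (all entries stay non-negative, since one only subtracts from edges with strictly positive excess) and strictly decreases $S$. Without this locking, the compensating excesses on $bc$ and on $ad$ could a priori live on two different slices, in which case no type-two move along that cycle is available and one would seem to need type three; your appeal to ``the edge deficits must cancel between them'' does not rule this out.

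Compounding this, the picture of the move space on which you propose to organize the case analysis is incorrect. For the unsigned incidence matrix \eqref{eq:k4-noloop} of the non-bipartite graph $K_4$, triangles are \emph{not} moves: the alternating sum $\Be_{ab}-\Be_{bc}+\Be_{ac}$ has image $2$ at vertex $a$, not zero, and in general odd cycles never lie in $\kerz A$. The kernel is $2$-dimensional and is spanned by the three alternating $4$-cycles (not ``the single $4$-cycle''); equivalently, it consists exactly of the vectors constant on the three opposite-edge pairs with the three values summing to zero --- which is precisely the paper's Lemma \ref{lem:lemma1-4v-noloop}, i.e.\ the key fact your argument is missing. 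Finally, your easy direction has a flaw: a fiber $\cF_{A,\Bb}$ with several elements does not imply that $A_\Bb$ has a nonzero move (for $\Bb=(1,1,1,1)$ the fiber consists of the three perfect matchings, whose disjoint supports give a trivial kernel and Markov degree $0$), and your suggested witness, the degree sequence of a $3$-cycle, actually has a singleton fiber. A correct witness is $\Bb=(2,2,2,2)$: twice a $4$-cycle equals the sum of the two doubled perfect matchings it spans, a genuine degree-two relation.
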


By {\tt 4ti2}(\cite{4ti2}) we easily obtain ${\rm GC}(A) = 3$, which equals 
the maximum 1-norm of ${\mathcal G}({\mathcal G}(A))$ (Theorem 3 of \cite{santos2003higher}).

Denote the four vertices as $a,b,c,d$, corresponding to the rows of $A$.
There are six edges corresponding to the columns  of $A$.
Let $E=\{ab,ac,ad,bc,bd,cd\}$ denote the edge set.
A graph $\Bg$ is identified with a 6-dimensional non-negative integer vector
\[
\Bg=(g(ab),g(ac),g(ad),g(bc),g(bd),g(cd))\in \N^6,
\]
whose elements
represent weights of the edges. 
For two graphs $\Bg, \hat \Bg$ in the same fiber of $A$, we write $\Bz=\Bg-\hat \Bg=(z(ab),\dots,z(cd))$, 
which is a move for $A$.

We prove two lemmas.
\begin{lemma}
\label{lem:lemma1-4v-noloop}
Let $\Bg, \hat\Bg$ be graphs in the same fiber of $A$ and let $\Bz=\Bg-\hat \Bg$. 
Then
\[
z(ab)=z(cd),\ z(ac)=z(bd), \ z(ad)=z(bc).
\]
\end{lemma}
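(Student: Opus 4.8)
The plan is to use the only hypothesis available, namely that $\Bz=\Bg-\hat\Bg$ is a move for $A$, so that $A\Bz=0$. Reading this off row by row produces one linear equation per vertex, each asserting that the signed weights on the three edges incident to that vertex sum to zero:
\begin{align*}
z(ab)+z(ac)+z(ad) &= 0, & z(ab)+z(bc)+z(bd) &= 0,\\
z(ac)+z(bc)+z(cd) &= 0, & z(ad)+z(bd)+z(cd) &= 0.
\end{align*}
The three asserted identities will then drop out of suitable $\Z$-linear combinations of these four equations, so there is no genuine obstacle here beyond organizing the bookkeeping cleanly.

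First I would sum all four equations. Since each of the six edges is incident to exactly two of the vertices $a,b,c,d$, every weight $z(e)$ is counted exactly twice, giving $2\sum_{e\in E}z(e)=0$ and hence $\sum_{e\in E}z(e)=0$. Next, for a pair of adjacent vertices I would add just their two degree equations. Taking $a$ and $b$, every edge of $K_4$ is incident to $a$ or to $b$ except the unique ``opposite'' edge $cd$, while the shared edge $ab$ is counted twice; thus the sum of the $a$-equation and the $b$-equation equals $\sum_{e\in E}z(e)+z(ab)-z(cd)$, and combining with $\sum_{e\in E}z(e)=0$ yields $z(ab)=z(cd)$. Running the same computation on the pair $a,c$ (opposite edge $bd$) gives $z(ac)=z(bd)$, and on the pair $a,d$ (opposite edge $bc$) gives $z(ad)=z(bc)$, which are exactly the three equalities in the statement.

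The only point that needs care is the correct identification, for each adjacent pair of vertices, of the unique edge disjoint from both, since that edge is precisely the one whose weight survives in the pairwise sum. As a sanity check, I note that $K_4$ is connected and non-bipartite, so its unsigned incidence matrix $A$ has rank $4$ and $\kerz A$ is two-dimensional; one could alternatively solve this $2$-dimensional kernel explicitly by introducing free parameters $z(ac),z(ad)$ and back-substituting, which recovers the same three identities. I would present the combinatorial ``sum two vertex equations'' route as the main argument, since it matches the graph-theoretic language of this section and makes the role of the opposite-edge pairing transparent.
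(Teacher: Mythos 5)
Your proof is correct and is essentially the paper's own argument: the paper likewise takes linear combinations of the four vertex-degree equations (writing $\deg(a)+\deg(b)-\deg(c)-\deg(d)=2(g(ab)-g(cd))$ for both $\Bg$ and $\hat\Bg$), which is exactly your combination $\tfrac12[(a)+(b)-(c)-(d)]$ phrased via $A\Bz=0$. The only difference is cosmetic bookkeeping (summing all four equations first versus subtracting pairwise degree sums), so there is nothing further to add.
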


\begin{proof} By symmetry it suffices to prove %
$z(ab)=z(cd)$.
Let $\deg(a)$ denote the degree of vertex $a$. We have 
\begin{align*}
\deg(a)&= g(ab) + g(ac)+g(ad) =  \hat g(ab) + \hat g(ac) + \hat g(ad),\\
\deg(b)&= g(ab) + g(bc)+g(bd) =  \hat g(ab) + \hat g(bc) + \hat g(bd) .
\end{align*}
Hence
\begin{align*}
\deg(a)+\deg(b)&= 2 g(ab)+ g(ac)+g(ad)+g(bc)+g(bd)\\
&=2 \hat g(ab)+ \hat g(ac)+ \hat g(ad)+\hat g(bc)+ \hat g(bd).
\end{align*}
Similarly
\begin{align*}
\deg(c)+\deg(d)&= 2 g(cd)+ g(ac)+g(ad)+g(bc)+g(bd)\\
&=2 \hat g(cd)+ \hat g(ac)+ \hat g(ad)+\hat g(bc)+ \hat g(bd).
\end{align*}
Then
\[
\deg(a)+\deg(b)-(\deg(c)+\deg(d))=2(g(ab)-g(cd))=2(\hat g(ab)-\hat g(cd))
\]
and 
\[
g(ab)-\hat g(ab)= g(cd)-\hat g(cd).
\]
\end{proof}

\begin{lemma}
\label{lem:lemma2-4v-noloop}
Let $\Bg, \hat\Bg$ in the same fiber of $A$ 
and let $g(e_1)\neq \hat g(e_1)$ for some $e_1\in E$. 
Then there exists a loop  $(e_1,e_2,e_3,e_4)$ of  
length 4 passing each vertex, such that $g(e_i)\neq \hat g(e_i)$, $i=1,\dots,4$, and 
the signs of $g(e_i) - \hat g(e_i)$ alternate.
\end{lemma}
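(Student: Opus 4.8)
The plan is to reduce everything to the three perfect matchings of $K_4$ and a single linear relation. By Lemma~\ref{lem:lemma1-4v-noloop} the move $\Bz=\Bg-\hat\Bg$ takes a common value on each perfect matching, so I set $\alpha=z(ab)=z(cd)$, $\beta=z(ac)=z(bd)$, $\gamma=z(ad)=z(bc)$. Since $\Bz\in\kerz A$ simply says that each vertex degree is preserved, reading the condition at vertex $a$ gives $z(ab)+z(ac)+z(ad)=\alpha+\beta+\gamma=0$ (the remaining vertices yield the same relation). The hypothesis $g(e_1)\neq\hat g(e_1)$ means that the matching value carried by $e_1$ is nonzero, so after relabelling I may assume $e_1$ lies in the matching $\{ab,cd\}$, i.e.\ $\alpha\neq 0$.

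Next I would recall that $K_4$ has exactly three $4$-cycles through all four vertices, each using both edges of two matchings and avoiding the third: the cycle $a\to b\to c\to d\to a$ uses the $\alpha$- and $\gamma$-matchings, the cycle $a\to b\to d\to c\to a$ uses the $\alpha$- and $\beta$-matchings, and the cycle $a\to c\to b\to d\to a$ uses the $\beta$- and $\gamma$-matchings. Along any such cycle the successive edge values of $\Bz$ read $X,Y,X,Y$, where $X,Y$ are the two matching values involved. Hence for a cycle built from matchings with values $X$ and $Y$, the four edges are all nonzero with alternating signs precisely when $X$ and $Y$ are both nonzero and of opposite sign.

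It therefore remains to select, among the two cycles through $e_1$ (the $\{\alpha,\gamma\}$-cycle and the $\{\alpha,\beta\}$-cycle, the third cycle avoiding the $\alpha$-matching that contains $e_1$), one in which the companion matching value is nonzero and of sign opposite to $\alpha$. This is forced by $\alpha+\beta+\gamma=0$: since $\beta+\gamma=-\alpha$ has sign opposite to that of $\alpha\neq 0$, at least one of $\beta,\gamma$ is nonzero with sign opposite to $\alpha$. Choosing the corresponding cycle and starting the traversal at $e_1$ produces the required loop $(e_1,e_2,e_3,e_4)$ with $z(e_i)\neq 0$ and alternating signs.

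I do not anticipate a genuine obstacle: once the kernel is parameterized by the three matching values subject to the single relation $\alpha+\beta+\gamma=0$, the statement collapses to an elementary sign argument about three reals summing to zero. The only point demanding care is the bookkeeping that verifies the $X,Y,X,Y$ pattern—checking that each matching's two edges sit in alternating positions along the chosen Hamiltonian cycle, so that the alternation is genuine and begins correctly at the prescribed edge $e_1$.
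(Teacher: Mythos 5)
Your proof is correct and follows essentially the same route as the paper's: both invoke Lemma~\ref{lem:lemma1-4v-noloop} to pair opposite edges into matching values, use the preserved degree at a vertex (your relation $\alpha+\beta+\gamma=0$) to force a companion matching value of sign opposite to that of $e_1$, and then read off the alternating $4$-cycle. Your parameterization by the three perfect matchings is just a more systematic phrasing of the paper's shorter WLOG/symmetry argument.
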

\begin{proof}
By symmetry we may assume that $e_1=ab$ and $g(ab)-\hat g(ab)>0$.  Then by the previous lemma 
$g(cd)-\hat g(cd)>0$.
Since $\deg(a)$ is common in $\Bg$ and $\hat \Bg$, by symmetry we may assume
that $g(ad)-\hat g(ad)<0$. Again by the previous lemma $g(bc)-\hat g(bc)<0$. 
Then $(ab,bc,cd,ad)$ is the required loop.
\end{proof}

We now give a proof of Theorem \ref{thm:k4-noloop}
based on the idea of  distance reduction (cf.\ Chapter 6 of \cite{aoki2012markov}).

\begin{proof}[Proof of Theorem \ref{thm:k4-noloop}]
Obviously $\max_{\Bb\in \N A} {\rm MD}(A_\Bb) > 1$. Hence by Theorem \ref{thm:main}
it suffices to prove that ${\rm MC}(A) = 2$. 
Let $\{ \Bg_1, \dots, \Bg_N \}$ and $\{\hat \Bg_1,\dots, \hat \Bg_N\}$ be two elements of the
same fiber for $A^{(N)}$. Let 
\[
S = \sum_{k=1}^N |\Bz_k|, \qquad \Bz_k = \Bg_k - \hat \Bg_k, 
\]
where $|\cdot |$ denotes the 1-norm of a 6-dimensional vector.

Suppose $S>0$.  By symmetry we may assume that
$\Bg_1 \neq \hat \Bg_1$. By  Lemma \ref{lem:lemma2-4v-noloop} we may assume
\[
z_1(ab)>0, \ z_1(bc)<0, \ z_1(cd)>0, \ z_1(ad)<0.
\]
Because $\{ \Bg_1, \dots, \Bg_N \}$ and  $\{\hat \Bg_1,\dots, \hat \Bg_N\}$ belong to the same fiber, we have \[
\sum_{k=1}^N z_k(e)=0 
\]
for each $e\in E$ (in particular for $e=bc$).
Hence there exits $k$ such that  $z_k(bc)>0$.
Let $k=2$ without loss of generality.
By Lemma \ref{lem:lemma1-4v-noloop} $g_2(ad) > \hat g_2(ad)$.   Let 
\begin{equation}
\label{eq:standard-edge-basis}
\Be_{ab}=(1,0,0,0,0,0)
\end{equation}
denote the graph with weight 1 only on the edge $ab$.
Similarly define $\Be_{bc}, \Be_{cd}, \Be_{ad}$.  Now consider the move
\begin{equation}
\label{eq:exchange-edges-two}
(\Bg_1, \Bg_2) \rightarrow (\Bg_1 +\Be_{ab}- \Be_{bc} + \Be_{cd}-\Be_{ad}, 
\Bg_2 -\Be_{ab} + \Be_{bc} - \Be_{cd} + \Be_{ad}).
\end{equation}
Then the vectors on the right-hand side are non-negative and $S$ is strictly decreased.  This proves
${\rm MC}(A) = 2$. 
\end{proof}

\begin{remark}
\label{rem:ohsugi}
Hidefumi Ohsugi gave a simple direct proof of 
$\max_{\Bb\in \N A} {\rm MD}(A_\Bb) =2$ by identifying $A_\Bb$ with a
Segre--Veronese configuration.
\end{remark}

The move in \eqref{eq:exchange-edges-two} can be understood as an {\em exchange} or {\em swap}
of edges between
two  graphs $\Bg_1, \Bg_2$, i.e., edges $bc$ and $ad$ are given from $\Bg_1$ to $\Bg_2$, 
and edges $ab$ and $cd$ are taken from $\Bg_2$ to $\Bg_1$. A move of degree two
for $A_\Bb$ and a move of type two for $A^{(N)}$ is an exchange of edges 
between two graphs.  
Similarly 
a move of degree $k$
for $A_\Bb$ and a move of type $k$ for $A^{(N)}$ is an exchange of edges 
among $k$ graphs.

At this point, we make some remarks on larger complete graphs without self-loops.
Consider the complete graph $K_5$ of five vertices without self-loops and
let 
\begin{equation}
\label{eq:5-noloop}
A=
\begin{pmatrix}
1 & 1 & 1 & 1 & 0 & 0 & 0 & 0 & 0 & 0\\
1 & 0 & 0 & 0 & 1 & 1 & 1 & 0 & 0 & 0\\
0 & 1 & 0 & 0 & 1 & 0 & 0 & 1 & 1 & 0\\
0 & 0 & 1 & 0 & 0 & 1 & 0 & 1 & 0 & 1\\
0 & 0 & 0 & 1 & 0 & 0 & 1 & 0 & 1 & 1
\end{pmatrix}
\end{equation}
be its incidence matrix.
By {\tt 4ti2} we can check 
\[
{\rm MC}(A) \geq 6, \quad  {\rm GC}(A)=15.
\]

Concerning $\max_{\Bb\in \N A} {\rm MD}(A_\Bb)$ we make the following conjecture.
\begin{conjecture}
For $A$ in \eqref{eq:5-noloop}
\begin{equation}
\label{eq:k5-conjecture}
\max_{\Bb\in \N A} {\rm MD}(A_\Bb) =2.
\end{equation}
\end{conjecture}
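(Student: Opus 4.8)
\noindent
Since \eqref{eq:main-result} only yields $\max_{\Bb\in\N A}{\rm MD}(A_\Bb)\le{\rm MC}(A)$ and one has ${\rm MC}(A)\ge 6$ for the matrix in \eqref{eq:5-noloop}, Theorem~\ref{thm:main} is far too weak here, and the plan is to argue directly by distance reduction, exactly as in the proof of Theorem~\ref{thm:k4-noloop}. The bound $\max_{\Bb\in\N A}{\rm MD}(A_\Bb)>1$ is immediate, since $A_\Bb$ has no nontrivial move of degree one while some fiber $\mathcal F_{A_\Bb,\Bc}$ has more than one element; so it suffices to establish the upper bound, namely that every fiber $\mathcal F_{A_\Bb,\Bc}$ is connected by moves of degree at most two. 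I would fix two multisets $\{\Bg_1,\dots,\Bg_N\}$ and $\{\hat\Bg_1,\dots,\hat\Bg_N\}$ of weighted graphs on $K_5$ representing two elements of the same fiber, set $\Bz_k=\Bg_k-\hat\Bg_k\in\kerz A$ and $S=\sum_{k=1}^N|\Bz_k|$, and try to show that whenever $S>0$ a single exchange of edges between two of the graphs strictly decreases $S$. The decisive structural feature, which is precisely what separates $\max{\rm MD}$ from ${\rm MC}$, is that all $N$ layers share the \emph{same} degree sequence $\Bb$ (the repetition of $\Bb$ in \eqref{eq:brc}); the configurations realizing ${\rm MC}(A)\ge 6$ use fibers of $A^{(N)}$ whose degree sequences differ across layers, and the argument must exclude them.

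A degree-two move is the exchange of a move $\bm u\in\kerz A$ between a pair $\Bg_s,\Bg_t$, sending $(\Bg_s,\Bg_t)\mapsto(\Bg_s-\bm u,\Bg_t+\bm u)$, and it strictly decreases $S$ while remaining in the fiber once $\bm u$ is conformal to $\Bz_s$ and $-\bm u$ is conformal to $\Bz_t$ (non-negativity is then automatic, as in the proof of Theorem~\ref{thm:k4-noloop}). For $K_4$ this was achieved with an alternating $4$-cycle, and the identity $z(ab)=z(cd)$, etc.\ of Lemma~\ref{lem:lemma1-4v-noloop} was exactly what forced all four edges of the cycle to be conformal simultaneously and what kept the partner graph non-negative; no comparable identity holds for $K_5$. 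The first step is therefore to replace Lemma~\ref{lem:lemma2-4v-noloop} by a description of the conformal primitive pieces of a nonzero $\Bz_s$: as an element of the integer kernel of the incidence matrix of $K_5$, $\Bz_s$ decomposes conformally into \emph{even cycles} (necessarily $4$-cycles) and \emph{pairs of odd cycles} (two triangles sharing a vertex or joined by a path, a triangle and a pentagon, and so on). When $\Bz_s$ contains a conformal $4$-cycle, the plan is to produce a partner layer $t$ from the balance relations $\sum_k z_k(e)=0$, $e\in E$—just as the index $k=2$ was produced in the proof of Theorem~\ref{thm:k4-noloop}—and to exchange the cycle; I expect this even case to succeed modulo a more delicate partner selection, since without Lemma~\ref{lem:lemma1-4v-noloop} one must secure anti-conformality (hence non-negativity) on all the relevant edges of a single $\Bz_t$ rather than on a pair of coupled edges.

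The genuine obstacle is the odd case. A difference $\Bz_s$ supported on two triangles contains \emph{no} conformal even cycle, so no exchange of a single even cycle can reduce it, and the only conformal move dominated by a primitive $\Bz_s$ is $\Bz_s$ itself; but the balance relations do not furnish a single partner $\Bz_t$ that is anti-conformal to the whole figure eight, because the negative parts of the two triangles may be forced into \emph{different} layers, and then adding $\Bz_s$ to any one $\Bg_t$ can violate non-negativity. This is exactly the mechanism responsible for ${\rm MC}(A)\ge 6$, and the entire content of \eqref{eq:k5-conjecture} is that it cannot actually arise for equal-$\Bb$ fibers. The hard part will be to exploit the fact that every $\Bg_k$ and $\hat\Bg_k$ has the same degree sequence $\Bb$ to show that an odd-cycle-pair obstruction is always resolvable within degree two—either by a preliminary degree-two exchange that rotates one triangle into a position where a common anti-conformal partner appears, or by re-routing one triangle through a shared edge so that its two halves can be matched to a single partner. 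I would attempt this through a finite case analysis over the few combinatorial types of odd-cycle pairs in $K_5$ together with the parity constraints a fixed $\Bb$ imposes; failing a clean combinatorial argument, the conjecture should be settled by identifying $A_\Bb$ with a Segre--Veronese-type configuration in the spirit of Remark~\ref{rem:ohsugi}, or by a {\tt 4ti2} computation of the minimal Markov basis of $A_\Bb$ over the finitely many fibers $\Bc$ that could support an indispensable move of degree three. Disposing of this odd-cycle case is precisely the reason \eqref{eq:k5-conjecture} remains a conjecture.
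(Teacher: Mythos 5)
You should first be aware that the paper itself does not prove this statement: it is presented as a conjecture, and the only support the authors offer is computational --- Table \ref{tab:k5} lists, for twenty-two degree sequences $\Bb$, the minimal Markov bases of $A_\Bb$ computed with {\tt 4ti2}, every one of which consists of degree-two moves only. So there is no proof in the paper to compare yours against, and your proposal, by its own admission, is not a proof either: it is a program whose decisive step --- resolving what you call the odd-cycle-pair (figure-eight) obstruction using only the hypothesis that all layers share the same $\Bb$ --- is left open. That said, your diagnosis of where the difficulty lies is accurate and matches the structure of the paper. The $K_4$ argument (Theorem \ref{thm:k4-noloop}) hinges on the coupling $z(ab)=z(cd)$, $z(ac)=z(bd)$, $z(ad)=z(bc)$ of Lemma \ref{lem:lemma1-4v-noloop}, which simultaneously manufactures the alternating $4$-cycle of Lemma \ref{lem:lemma2-4v-noloop} and makes the partner layer anti-conformal (hence non-negative) for free; no analogue holds for $K_5$, whose kernel contains primitive elements built from pairs of odd cycles, and these are exactly what drive ${\rm MC}(A)\geq 6$. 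Reading the gap between $\max_{\Bb}{\rm MD}(A_\Bb)$ and ${\rm MC}(A)$ as coming from the requirement that the $\Bb$'s in \eqref{eq:brc} agree across layers is also the correct interpretation, and it is the one the authors themselves give after Theorem \ref{thm:main}.

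Two of your proposed fallbacks, however, are not actually available. A {\tt 4ti2} computation cannot settle \eqref{eq:k5-conjecture}: $\Bb$ ranges over the infinite set $\N A$, and you give no reduction of the conjecture to finitely many $\Bb$ (for a \emph{fixed} $\Bb$ the computation is finite and covers all $\Bc$ at once, but nothing bounds the set of $\Bb$'s that must be checked). Such finite computations are precisely what the authors performed, and precisely why the statement remains a conjecture rather than a theorem. Likewise, the Segre--Veronese identification of Remark \ref{rem:ohsugi} rests on the $K_4$ coupling lemma that you yourself observe fails for $K_5$, so there is no reason to expect $A_\Bb$ to be of Segre--Veronese type here. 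In short: your assessment of the statement's status is correct, your distance-reduction framework is the natural one and consistent with the paper's methods, but the step you leave open is genuinely open --- for you and for the authors alike.
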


\begin{table}[htbp]
\caption{Number of moves in minimal Markov bases for $A_{\Bb}$ in the case of $K_5$}
\label{tab:k5}
\begin{center}
{\footnotesize
\begin{tabular}{|c|c|c|} \hline
$\Bb$ & \# moves of deg 2 & \# moves of deg 3\\ \hline
(2,2,2,1,1)&9&0\\
(2,2,2,2,2)&95&0\\
(3,2,2,2,1)&39&0\\
(3,3,2,1,1)&9&0\\
(3,3,2,2,2)&16&0\\
(3,3,3,2,1)&105&0\\
(3,3,3,3,2)&741&0\\
(4,2,2,2,2)&105&0\\
(4,3,2,2,1)&39&0\\
(4,3,3,1,1)&9&0\\
(4,3,3,2,2)&413&0\\
(4,3,3,3,1)&225&0\\
(4,3,3,3,3)&1893&0\\
(4,4,2,1,1)&9&0\\
(4,4,2,2,2)&216&0\\
(4,4,3,2,1)&105&0\\
(4,4,3,3,2)&1179&0\\
(4,4,4,2,2)&710&0\\
(4,4,4,3,1)&420&0\\
(4,4,4,3,3)&4032&0\\
(4,4,4,4,2)&2718&0\\
(4,4,4,4,4)&10581&0\\ \hline
\end{tabular}
}
\end{center}
\end{table}

Our conjecture is based on the of numbers of moves of degrees two and three in minimal
Markov bases for various $A_{\Bb}$ in Table \ref{tab:k5} computed with {\tt 4ti2}.

For the case $K_6$ of 6 vertices, 
we can easily check that $\max_{\Bb\in \N A} {\rm MD}(A_\Bb) \ge 4$.

\subsection{Complete graph on three vertices with self-loops}
\label{subsec:with-loops}

We consider the incidence matrix of the complete graph on three vertices with self-loops
as the base configuration $A$:
\begin{align}
\label{eq:3-loop}
A=
\begin{pmatrix}
2 & 1 & 1 & 0 & 0 & 0\\
0 & 1 & 0 & 2 & 1 & 0\\
0 & 0 & 1 & 0 & 1 & 2
\end{pmatrix}.
\end{align}
The following theorem holds.
\begin{theorem}
\label{thmmd3}
For $A$ in \eqref{eq:3-loop}
\begin{equation}
\label{eq:3-loop-result}
\max_{\Bb\in \N A} {\rm MD}(A_\Bb)  = 3,\quad
 {\rm MC}(A) = 5.
\end{equation}
Furthermore $\max_{\Bb\in \N A \setminus \{(2,2,2)\}}{\rm MD}(A_\Bb)  = 2$.
\end{theorem}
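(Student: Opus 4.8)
The plan is to prove the two bounds separately; the content is the upper bound ${\rm MD}(A_\Bb)\le 2$ for every $\Bb\neq(2,2,2)$, while $\max\ge 2$ is witnessed by a single explicit quadratic relation (for instance in $\Bb=(4,4,4)$ the two distinct pairs $\{2H_a,2H_b\}$ and $\{H_a+H_b,\,H_a+H_b\}$ have equal edge-sum, so $A_{(4,4,4)}$ carries a genuine degree-two move). As in the proof of Theorem~\ref{thm:k4-noloop}, I would argue inside the higher Lawrence lifting. By the correspondence established before Theorem~\ref{thm:main}, it suffices to show that moves of \emph{type} at most two connect every equal-margin fiber $\cF_{A^{(N)},(\Bb^{(N)},\Bc)}$: that any two multisets $\{\Bg_1,\dots,\Bg_N\}$, $\{\hat\Bg_1,\dots,\hat\Bg_N\}$ of graphs in $\cF_{A,\Bb}$ with common edge-sum $\Bc$ can be joined by pairwise edge-exchanges, since each such type-two move projects to a degree-two move of $A_\Bb$.

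I would reduce this to a peeling lemma: given a multiset $M=\{\Bg_1,\dots,\Bg_N\}$ summing to $\Bc$ and a target graph $\Bt\in\cF_{A,\Bb}$ with $\Bt\le\Bc$ coordinatewise, degree-two moves transform $M$ into a multiset containing $\Bt$. Granting this, one joins $M$ to $\hat M$ by taking $\Bt$ to be a graph of $\hat M$, creating it inside $M$, deleting the common copy, and inducting on $N$ (the case $N=1$ forcing $M=\hat M=\{\Bc\}$). The elementary fact driving the lemma is that if two graphs $\Bg_p,\Bg_q\in M$ dominate $\Bt$ coordinatewise, then $\Bg_p+\Bg_q-\Bt$ again lies in $\cF_{A,\Bb}$ (its degree sequence is $2\Bb-\Bb=\Bb$ and it is non-negative), so the single degree-two move $\{\Bg_p,\Bg_q\}\to\{\Bt,\ \Bg_p+\Bg_q-\Bt\}$ creates the target.

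Everything then comes down to producing, by degree-two moves, a pair in $M$ dominating $\Bt$, and this is where the exceptional fiber enters. The only rigid obstruction is the pattern $\{H_a,H_b,H_c\}$, where $H_x$ is the graph consisting of the self-loop at $x$ together with a doubled edge on the other two vertices: in the fiber $(2,2,2)$ one has $\cF_{A,(2,2,2)}=\{G_0,G_1,H_a,H_b,H_c\}$ (with $G_0$ the three self-loops and $G_1$ the triangle), no pair of the $H$'s dominates $G_1$, and the sum of any two $H$'s admits no alternative splitting into two graphs of the fiber, so no bridging exchange exists and one is forced into the indispensable relation $\{G_0,G_1,G_1\}\leftrightarrow\{H_a,H_b,H_c\}$. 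For $\Bb\neq(2,2,2)$ I would show this lock never closes: whenever no pair dominates $\Bt$, some pair $\Bg_p,\Bg_q$ has $\Bg_p+\Bg_q$ realizable as $\Bg'+\Bg''$ for two \emph{different} graphs of $\cF_{A,\Bb}$ that redistribute the offending concentrated coordinate (the prototype being the averaging move $\{2H_a,2H_b\}\to\{H_a+H_b,\,H_a+H_b\}$, legitimate because $H_a+H_b$ is a single graph once the degrees exceed the minimal value $2$), and such a bridging exchange strictly improves the coverage at the coordinate where $\Bt$ fails to be dominated.

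The main obstacle is exactly this last step: converting the slogan ``for $\Bb\neq(2,2,2)$ there is always room to bridge'' into a terminating distance reduction. I would take $S=\sum_k|\Bg_k-\hat\Bg_k|$ (after an optimal matching of layers) as potential and run a case analysis on the degree sequence $\Bb$ and on the coordinate $e$ at which $\Bt$ is uncovered, exhibiting in each case an explicit type-two move that either creates $\Bt$ or strictly decreases $S$. Isolating $(2,2,2)$ as the unique fiber in which every candidate bridging move collapses back to the starting pair—so that $S$ cannot be decreased without a type-three move—is the delicate point, and it reflects precisely the fact that in that single fiber the doubled-edge graphs $H_x$ carry just enough, and no more than enough, weight to block all pairwise covers.
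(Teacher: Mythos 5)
There are two genuine gaps. The first and largest: the theorem has three assertions, and your proposal addresses only the Markov-degree ones, never the claim ${\rm MC}(A)=5$. That claim requires (i) an upper bound valid for \emph{every} fiber of \emph{every} Lawrence lifting $A^{(N)}$ --- and there, unlike in the fibers of $A_\Bb$, the degree sequences of the $N$ layers need not be equal, so a peeling argument anchored to a single $\Bb$ does not even apply --- and (ii) a matching lower bound, which the paper obtains by exhibiting an explicit move of type five whose slices are all indispensable and invoking Proposition \ref{lem:indispensable-for-lawrence}. Nothing in your proposal produces either half, so as written it could at best prove $\max_{\Bb\in \N A} {\rm MD}(A_\Bb)=3$ and the refinement for $\Bb\neq(2,2,2)$, not \eqref{eq:3-loop-result}.

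The second gap is that the core of the Markov-degree argument is deferred rather than carried out. Your setup is sound and is essentially the paper's: you embed fibers of $A_\Bb$ into fibers of $A^{(N)}$ via the correspondence around Theorem \ref{thm:main}, use pairwise edge exchanges as degree-two moves, correctly identify the exceptional fiber $(2,2,2)$ with its cubic relation $\Ba_1+2\Ba_4=\Ba_2+\Ba_3+\Ba_5$ (the paper's \eqref{eq:a222}), and propose a distance reduction on $S=\sum_k|\Bg_k-\hat\Bg_k|$. But the entire difficulty of the theorem sits in the step you explicitly postpone --- ``converting the slogan \dots into a terminating distance reduction.'' A case analysis ``on the degree sequence $\Bb$ and on the uncovered coordinate'' is open-ended: $\Bb$ ranges over infinitely many values, and you give no finite combinatorial structure that makes the enumeration exhaustive or guarantees termination of the bridging moves. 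The paper obtains exactly this finiteness by conformally decomposing each nonzero difference $\Bg_k-\hat\Bg_k$ into elements of the Graver basis of $A$ --- only four patterns $\bm A,\bm B,\bm C,\bm D$ up to sign and vertex symmetry, listed in \eqref{eq:graver3a} --- running the case distinction on which patterns occur, and then, for $\Bb\neq(2,2,2)$, eliminating the single degree-three case by the chain of Lemmas \ref{lem:cc}--\ref{lem:d3not} (where the hypothesis $\deg(c)\ge 3$ is what fails precisely at $\Bb=(2,2,2)$). Without such a finite classification of the local obstructions, and without the Markov-complexity half, your proposal is a plausible plan but not a proof.
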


Incidentally we obtained ${\rm GC}(A)=8$ by {\tt 4ti2} (\cite{4ti2})
and Theorem 3 of \cite{santos2003higher}.

As stated in Theorem \ref{thmmd3}, the fiber with 
$\bm b=(2,2,2)$ is special. %
${\mathcal F}_{A,(2,2,2)}$ consists of five vectors and $A_{(2,2,2)}$ is given as
\[
A_{(2,2,2)} = \begin{pmatrix}
 1 &  1 &  0 &  0 &  0 \\
 0 &  0 &  2 &  1 &  0 \\
 0 &  0 &  0 &  1 &  2 \\
 1 &  0 &  0 &  0 &  1 \\
 0 &  2 &  0 &  1 &  0 \\
 1 &  0 &  1 &  0 &  0   
\end{pmatrix}
=(\Ba_1, \Ba_2, \Ba_3, \Ba_4, \Ba_5).
\]
Columns of $A_{(2,2,2)}$ are displayed in 
Figure \ref{fig:2-2-2-examples}.
\begin{figure}[htbp]
\begin{center}
\includegraphics[width=40mm]{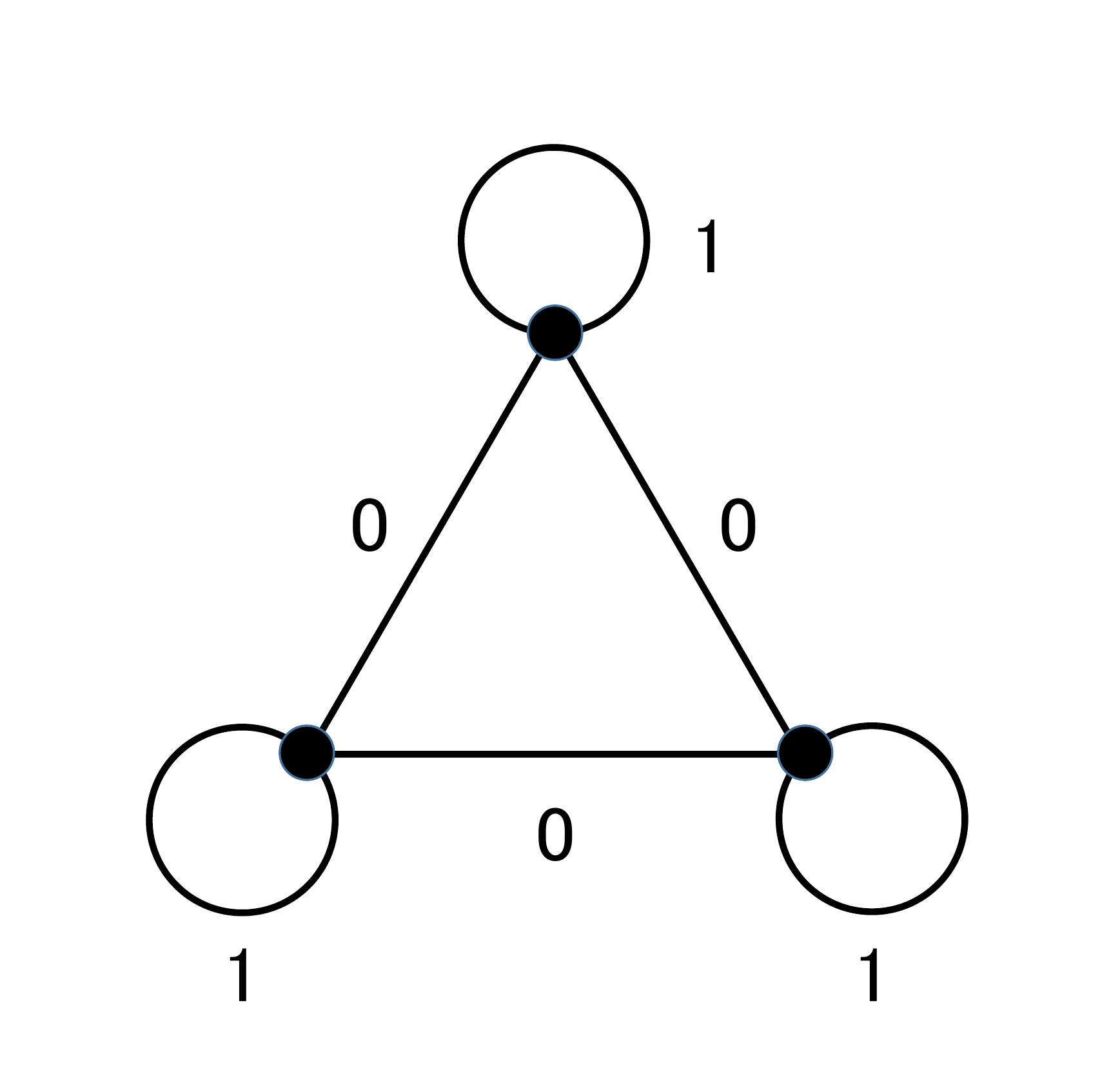} 
\includegraphics[width=40mm]{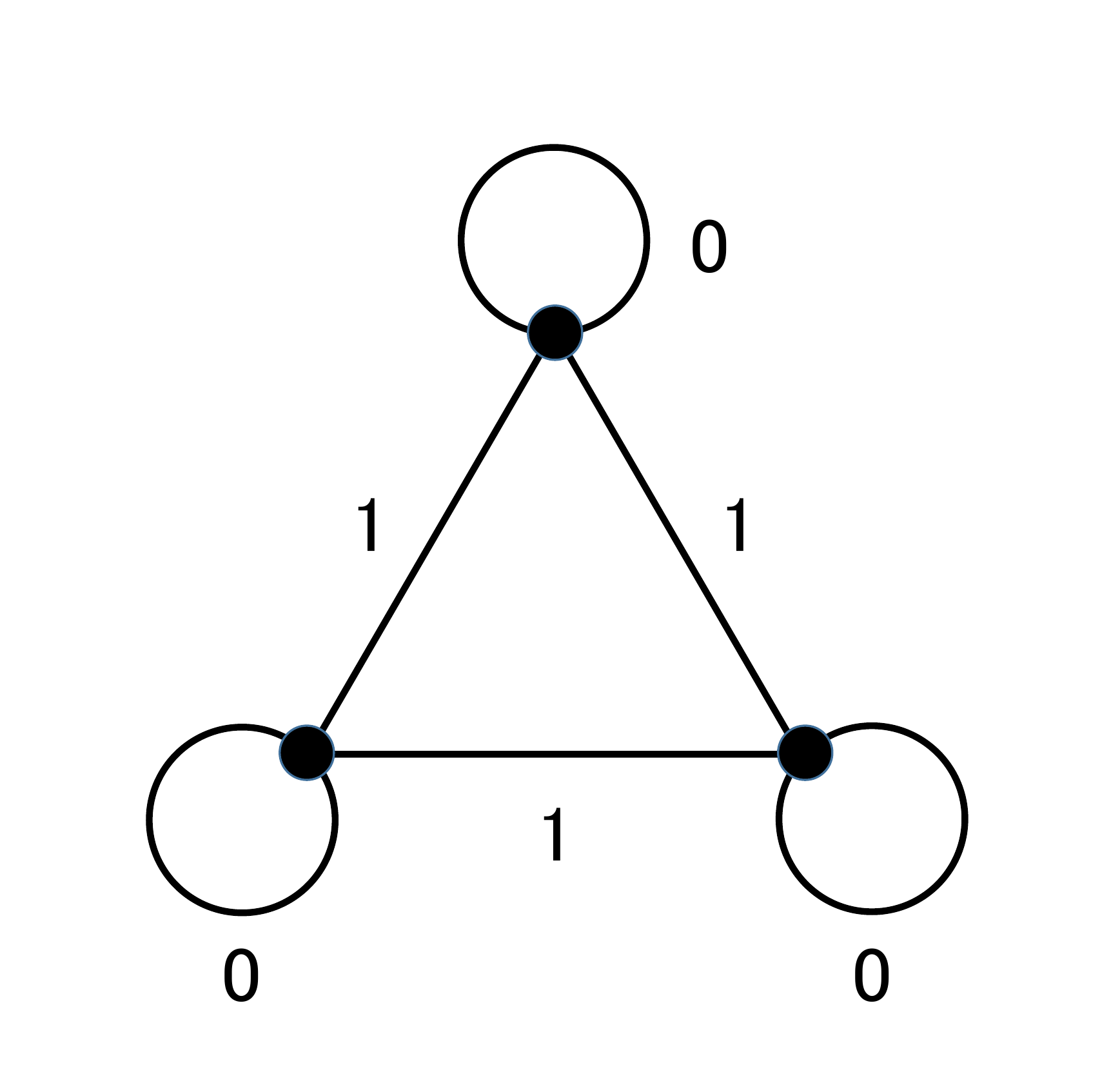} 
\includegraphics[width=40mm]{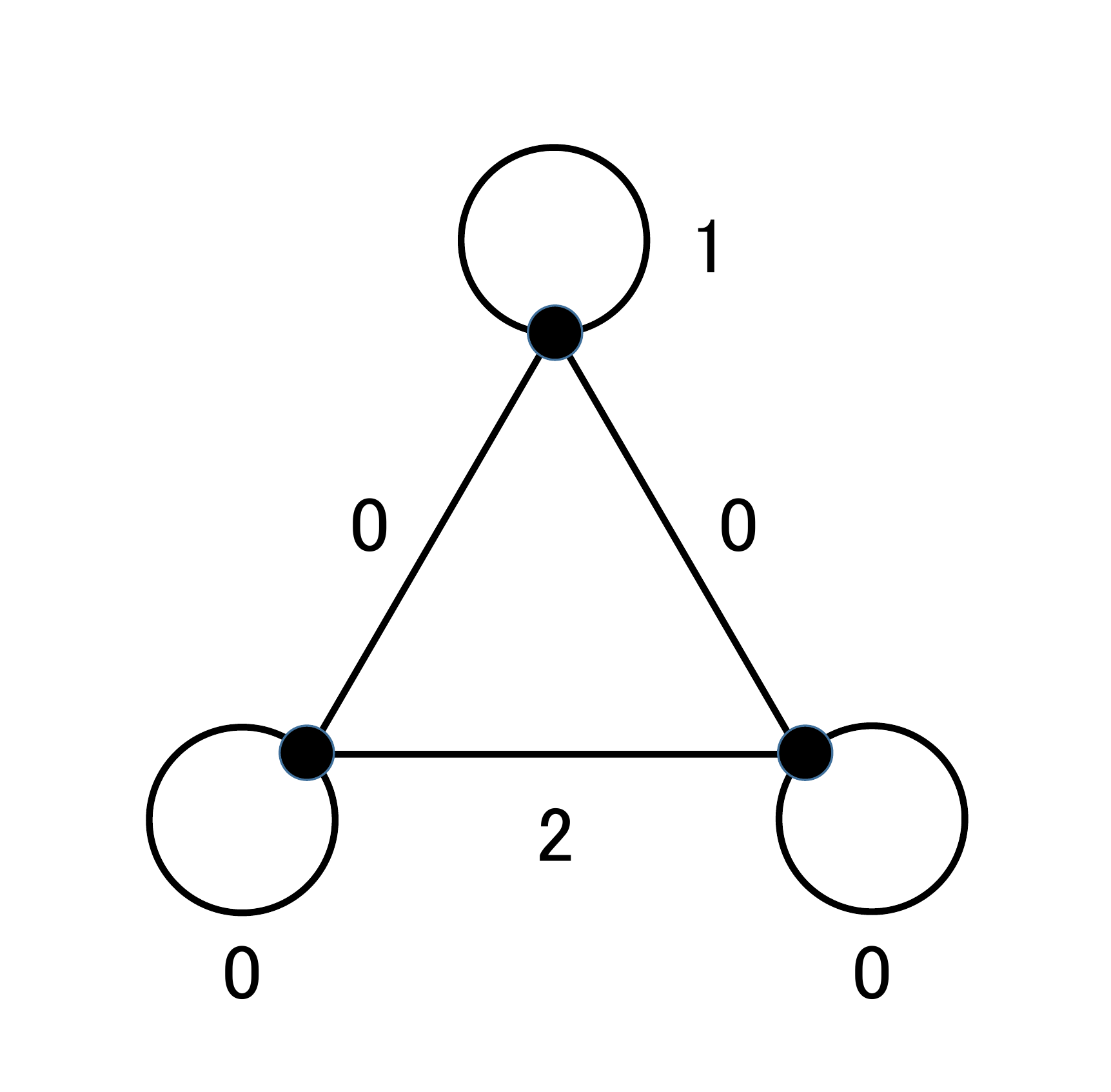}\\
Column 1 of $A_{(2,2,2)}$ \quad  Column 4 of $A_{(2,2,2)}$ \quad  Columns 2,3,5  of $A_{(2,2,2)}$
\caption{Graphs of the fiber ${\mathcal F}_{A,(2,2,2)}$}
\label{fig:2-2-2-examples}
\end{center}
\end{figure}

In this case $\rank A_{(2,2,2)}=4$ and the toric ideal  $I_{A_{(2,2,2)}}$ associated with $A_{(2,2,2)}$ is a principal ideal generated by the relation
\[
\Ba_1 + 2 \Ba_4 = \Ba_2 + \Ba_3 + \Ba_5.
\]
Hence 
\begin{equation}
\label{eq:a222}
{\rm MD}(A_{(2,2,2)})=3.
\end{equation}
\begin{figure}[htbp]
\begin{center}
   \includegraphics[width=2cm]{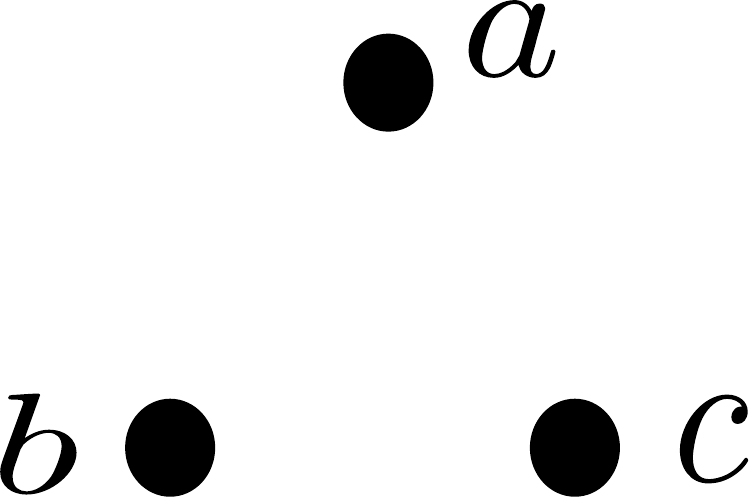}
 \caption{Labels of vertices}
\label{fig:vertex}
\end{center}
\end{figure}
To express vertices and edges, we label the vertices as Figure \ref{fig:vertex}.
Then for example we express the self-loop $\{a,a\}$ by an edge $aa$.

\bigskip
For the rest of this subsection we give a proof of Theorem \ref{thmmd3}.

It is easy to see that ${\rm MD}(A_\Bb)\le 2$ if $\min(\deg(a),\deg(b),\deg(c))\le 1$.
Hence from now on we assume that the degrees of three vertices are at least two.
For our proof we utilize the Graver basis ${\mathcal G}(A)$ of $A$ in \eqref{eq:3-loop}.  By {\tt 4ti2} (\cite{4ti2}) or by 
checking the moves for $A$, it is easily verified that 
${\mathcal G}(A)$ consists of ten column vectors in \eqref{eq:graver3a} and those with the minus sign. Hence $|{\mathcal G}(A)|=20$.  
There are four patterns of moves and  patterns $\bm B$ and $\bm C$ are indispensable moves.

\begin{equation}
\label{eq:graver3a}
\begin{array}{c|cccccccccc}
   & \bm A & \bm B(a) & \bm B(b) & \bm B(c) & \bm C(a) & \bm C(b) & \bm C(c) & \bm D(a) & \bm D(b) & \bm D(c) \\ \hline
aa &  1 &  1 & 0  & 0  & 0 & 1& 1 &  0 & 1 &-1\\
ab & -1 & -1 & -1 & 1  & 0 & 0&-2 &  2 & -2& 0\\
ac & -1 & -1 & 1  & -1 & 0 &-2& 0 & -2 & 0 & 2\\
bb &  1 & 0  & 1  & 0  & 1 & 0& 1 & -1 & 0 & 1\\
bc & -1 & 1  &-1  & -1 &-2 & 0& 0 & 0  & 2 &-2\\
cc & 1  & 0  & 0  & 1  & 1 & 1& 0 & 1  & -1& 0
\end{array}
\end{equation}

By using the notation in \eqref{eq:standard-edge-basis}, the move $\bm A$ is written as
\[
\bm A=\Be_{aa} + \Be_{bb} + \Be_{cc} - \Be_{ab} - \Be_{bc}-\Be_{ac}.
\]
We denote 20 moves of ${\mathcal G}(A)$ by $\bm A,\bm B(a),\dots,\bm D(c)$ and $-\bm A,-\bm B(a), \dots, -\bm D(c)$.
Moves $\bm A$, $\bm B(a)$, $\bm C(a)$, $\bm D(a), \bm D(b), \bm D(d)$ are displayed in Figure \ref{graver_basis_of_A1}.  
\begin{figure}[htbp]
  \begin{center}
  \includegraphics[width=40mm]{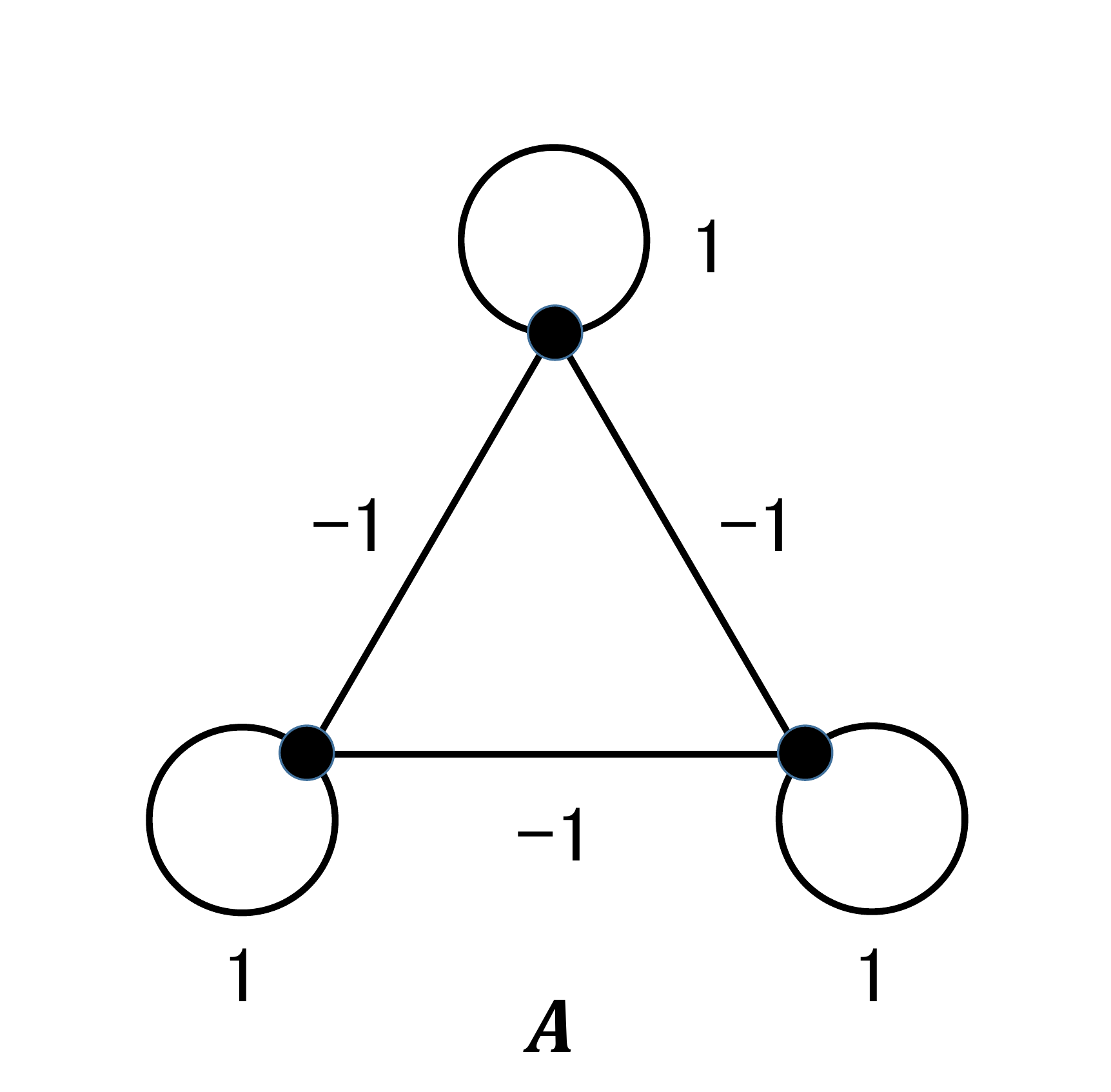}    
  \includegraphics[width=40mm]{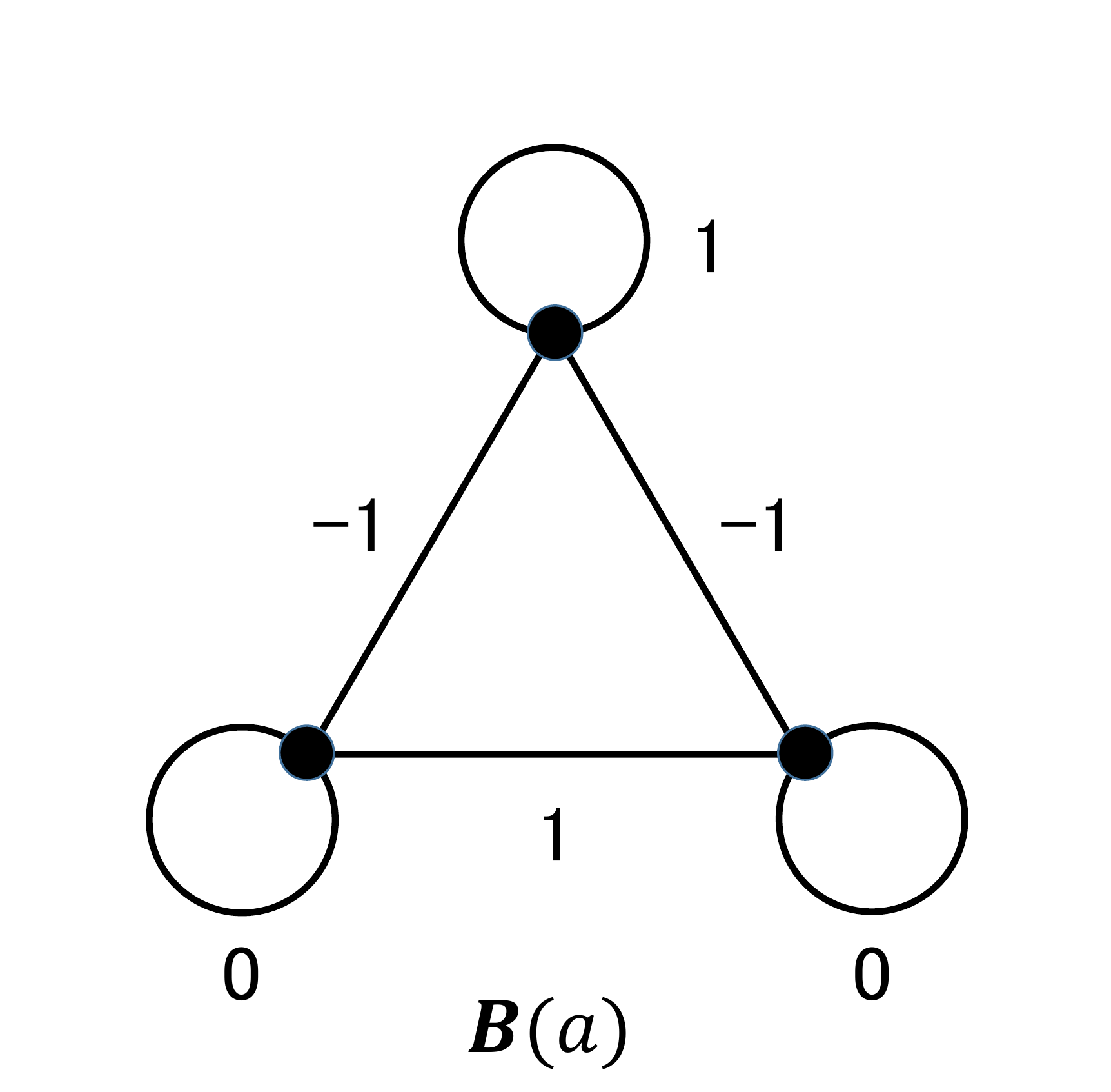} 
  \includegraphics[width=40mm]{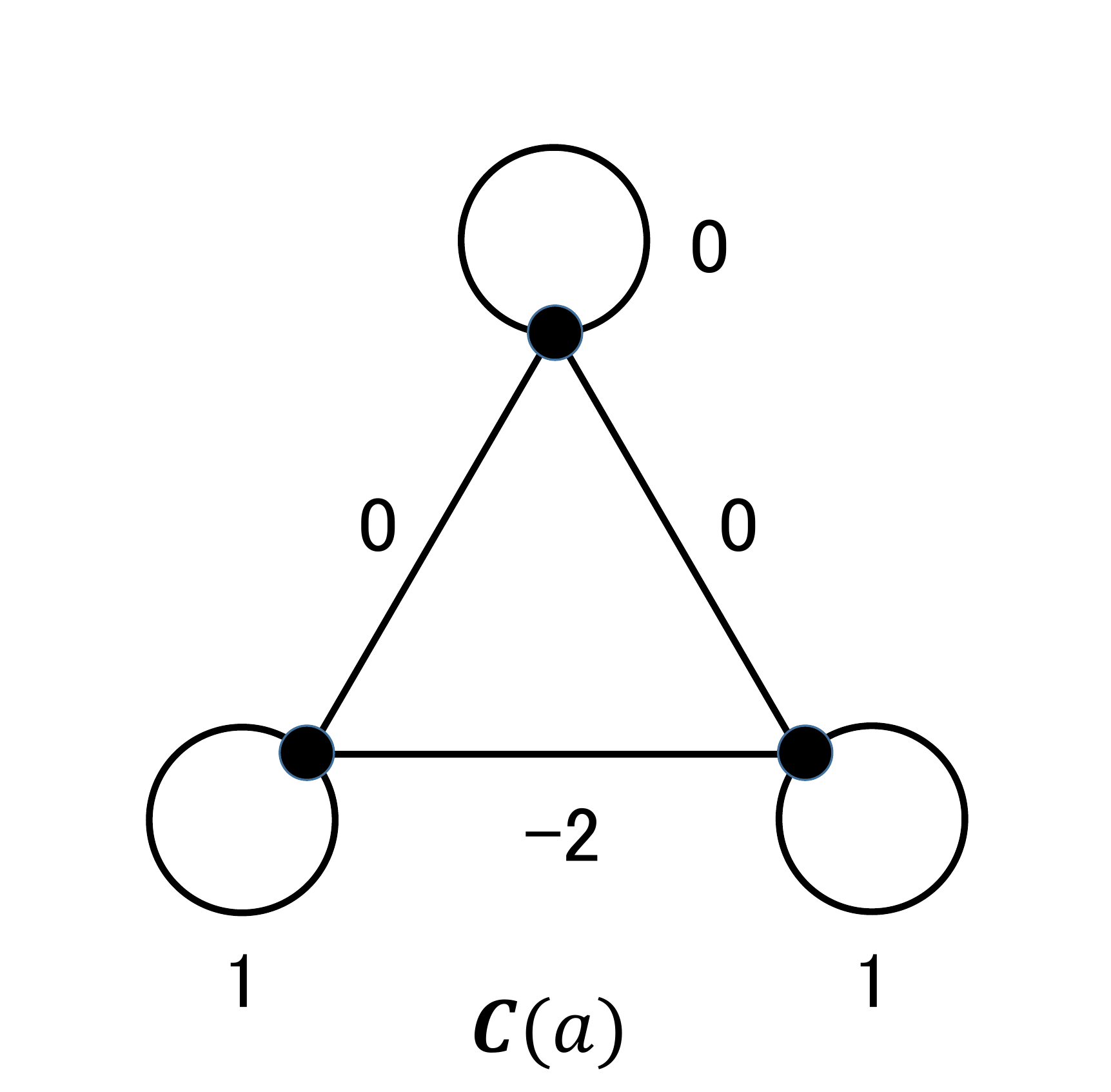} 
  \includegraphics[width=40mm]{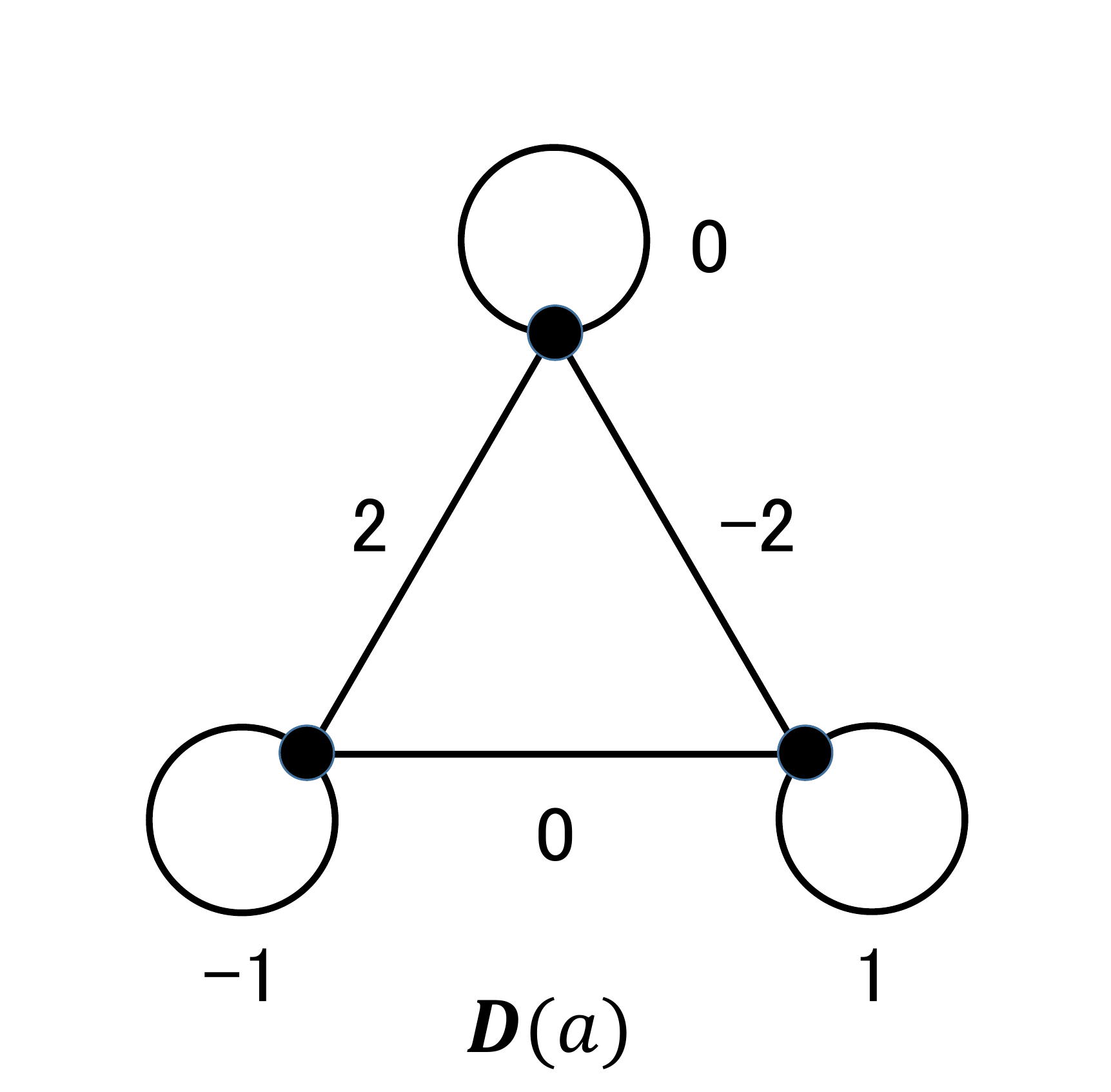} 
  \includegraphics[width=40mm]{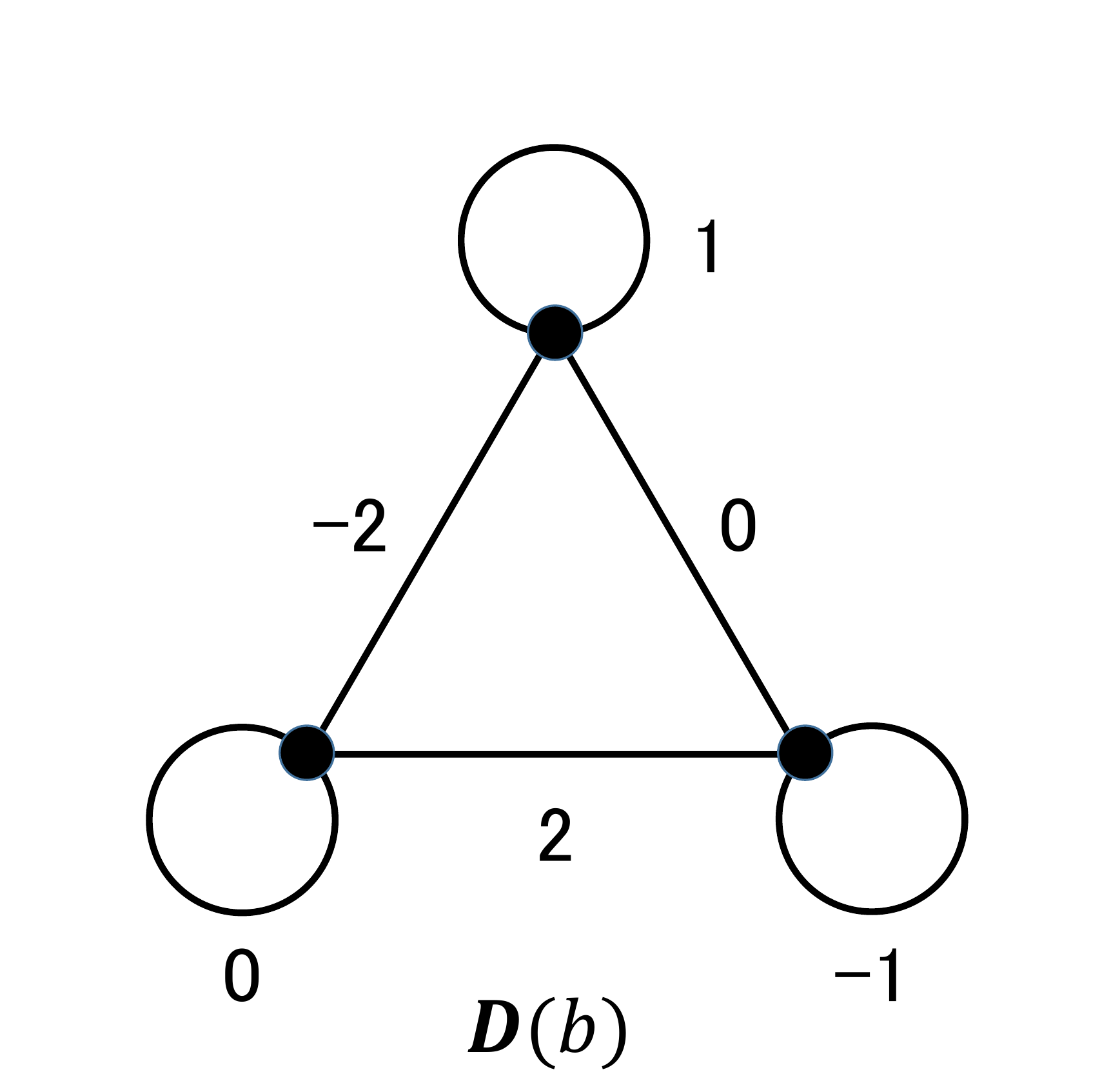}
  \includegraphics[width=40mm]{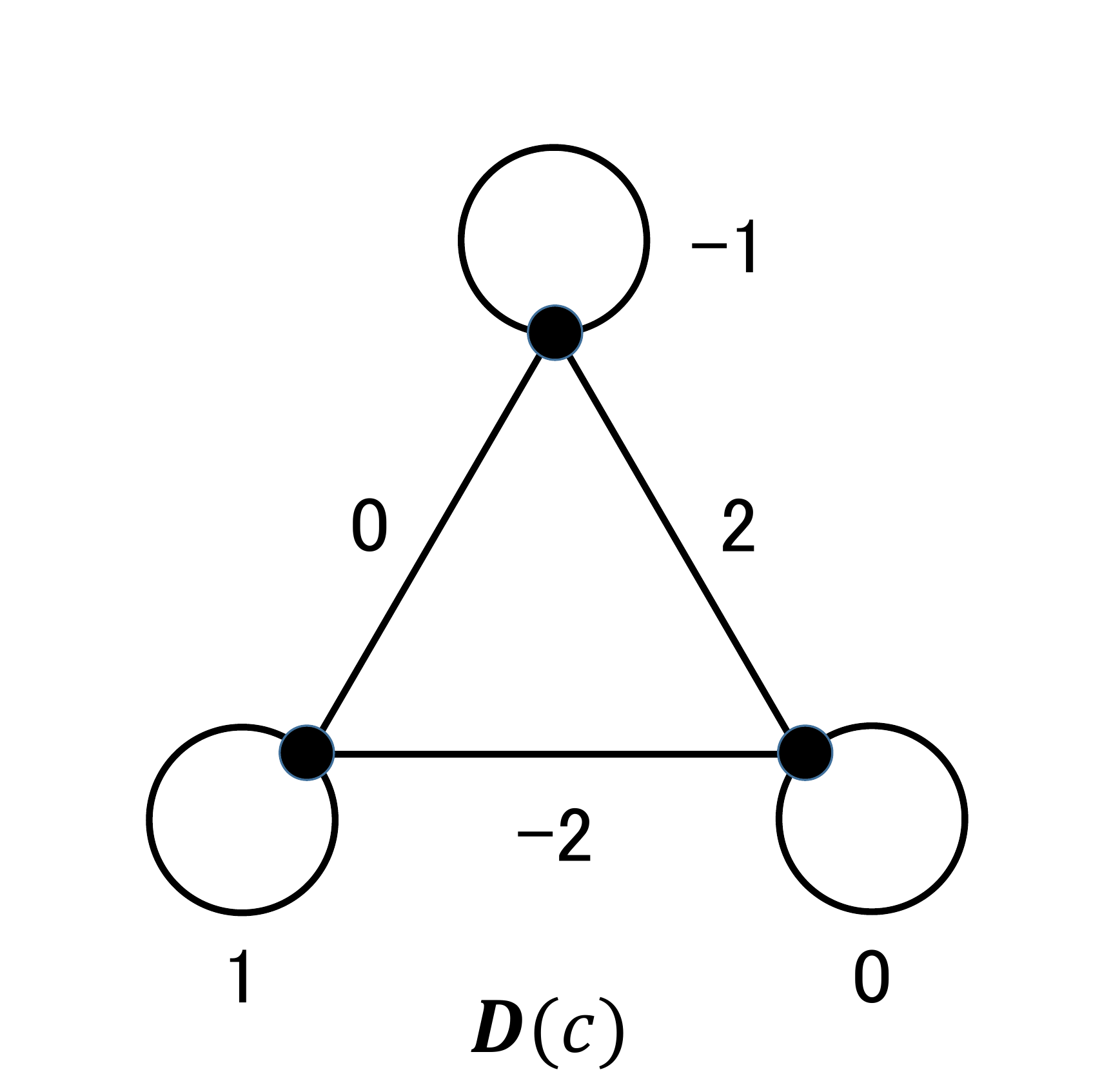}
  \caption{Moves $\bm A$, $\bm B(a)$, $\bm C(a)$, $\bm D(a)$, $\bm D(b)$, $\bm D(c)$}
  \label{graver_basis_of_A1}
  \end{center}
\end{figure}
For checking our proof of Theorem \eqref{thmmd3} it is convenient to have graphs for
$\bm B(b)$, $\bm B(c)$, $\bm C(b)$, $\bm C(c)$ in Figure \ref{graver_basis_of_A1a}.
\begin{figure}[hbtp]
  \begin{center}
  \includegraphics[width=35mm]{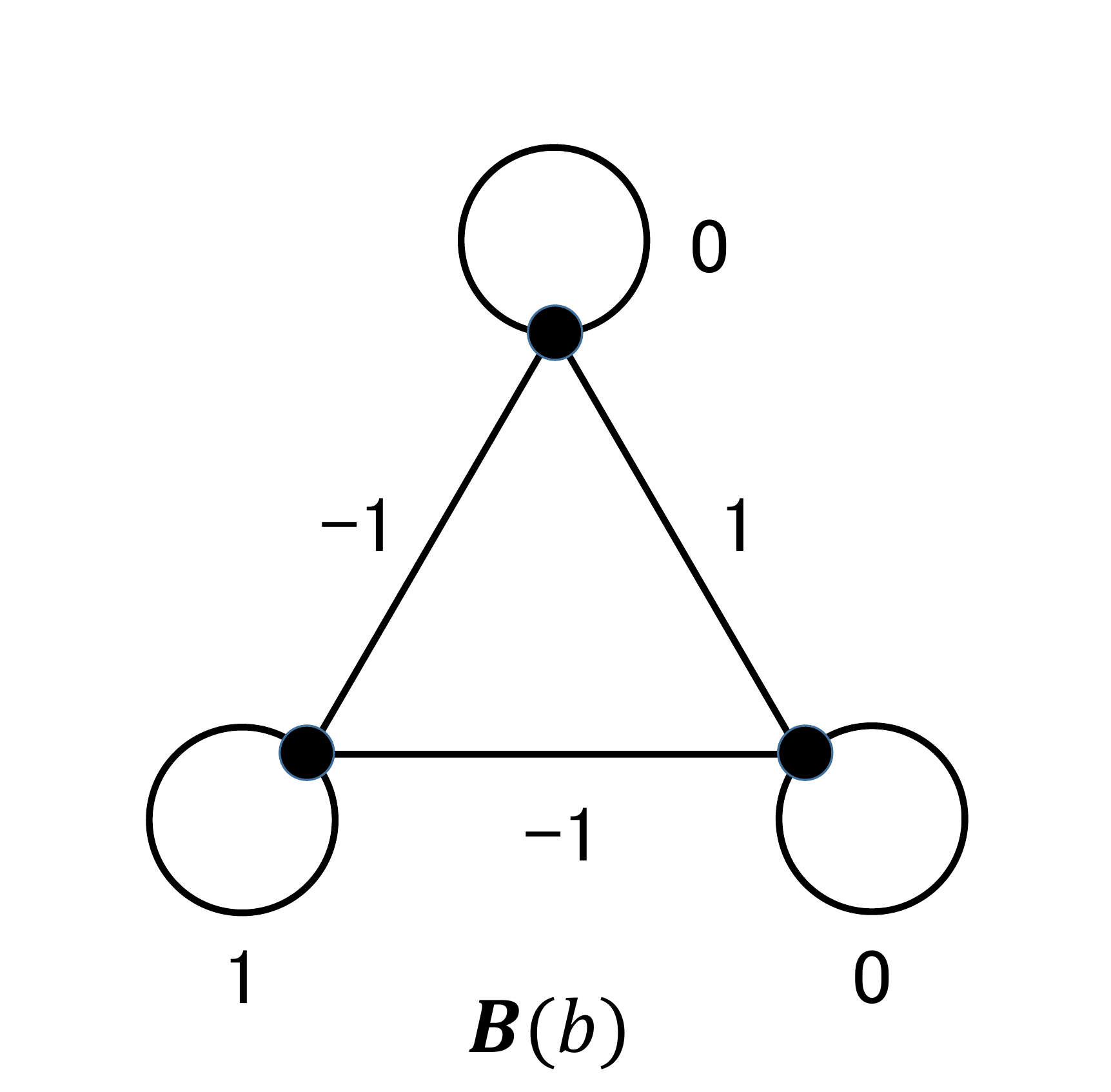}
  \includegraphics[width=35mm]{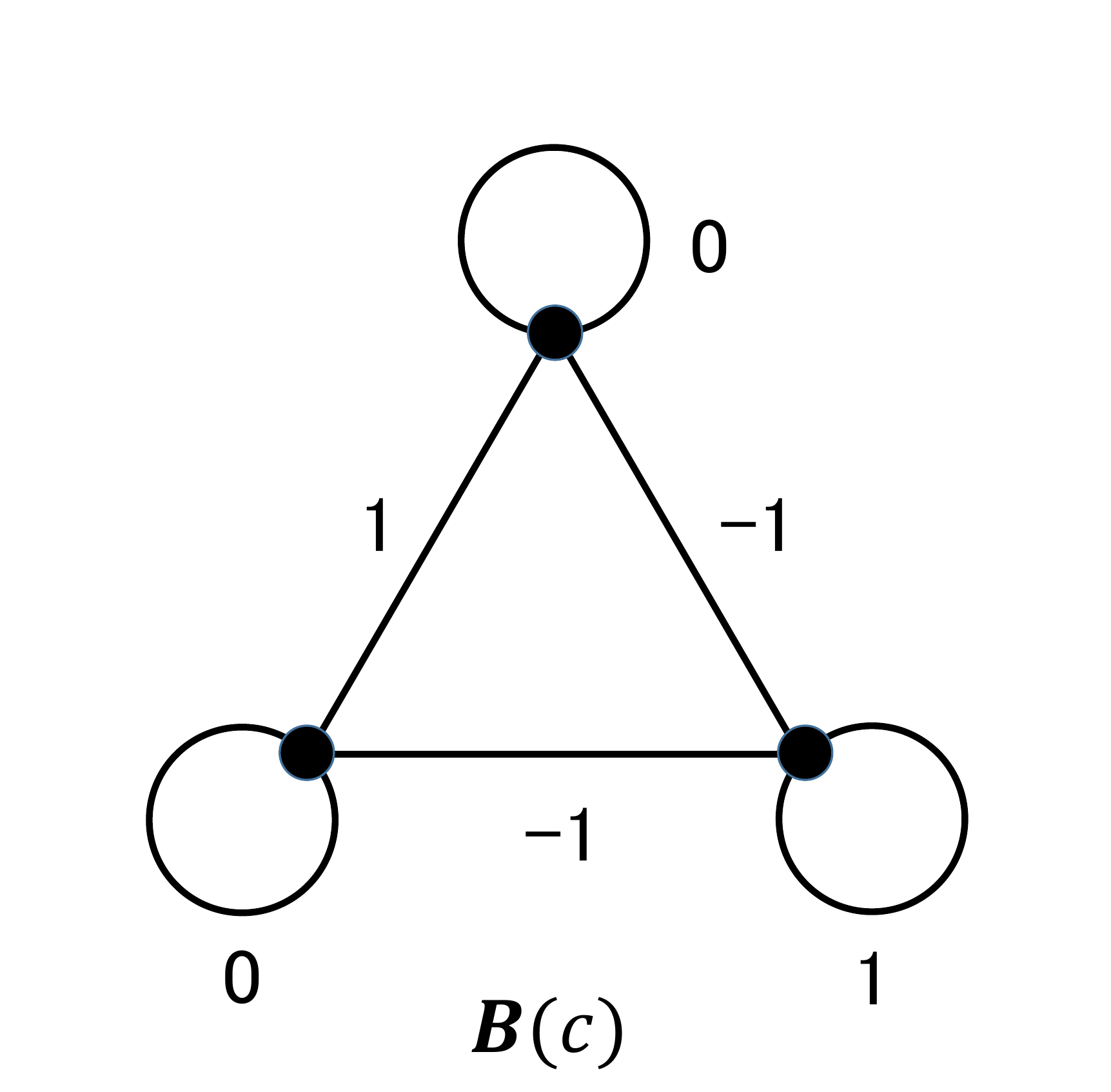} \quad 
  \includegraphics[width=35mm]{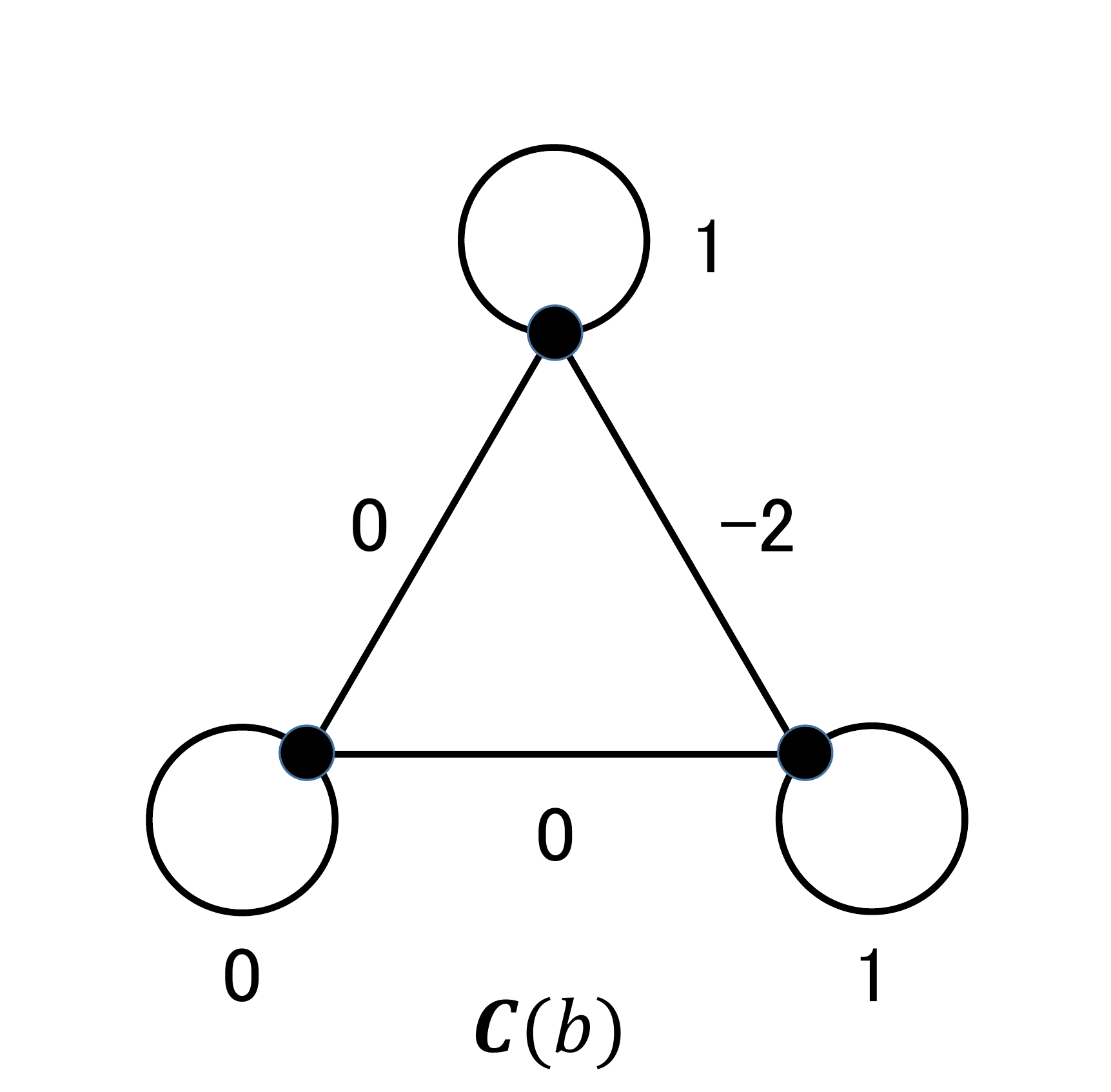}
  \includegraphics[width=35mm]{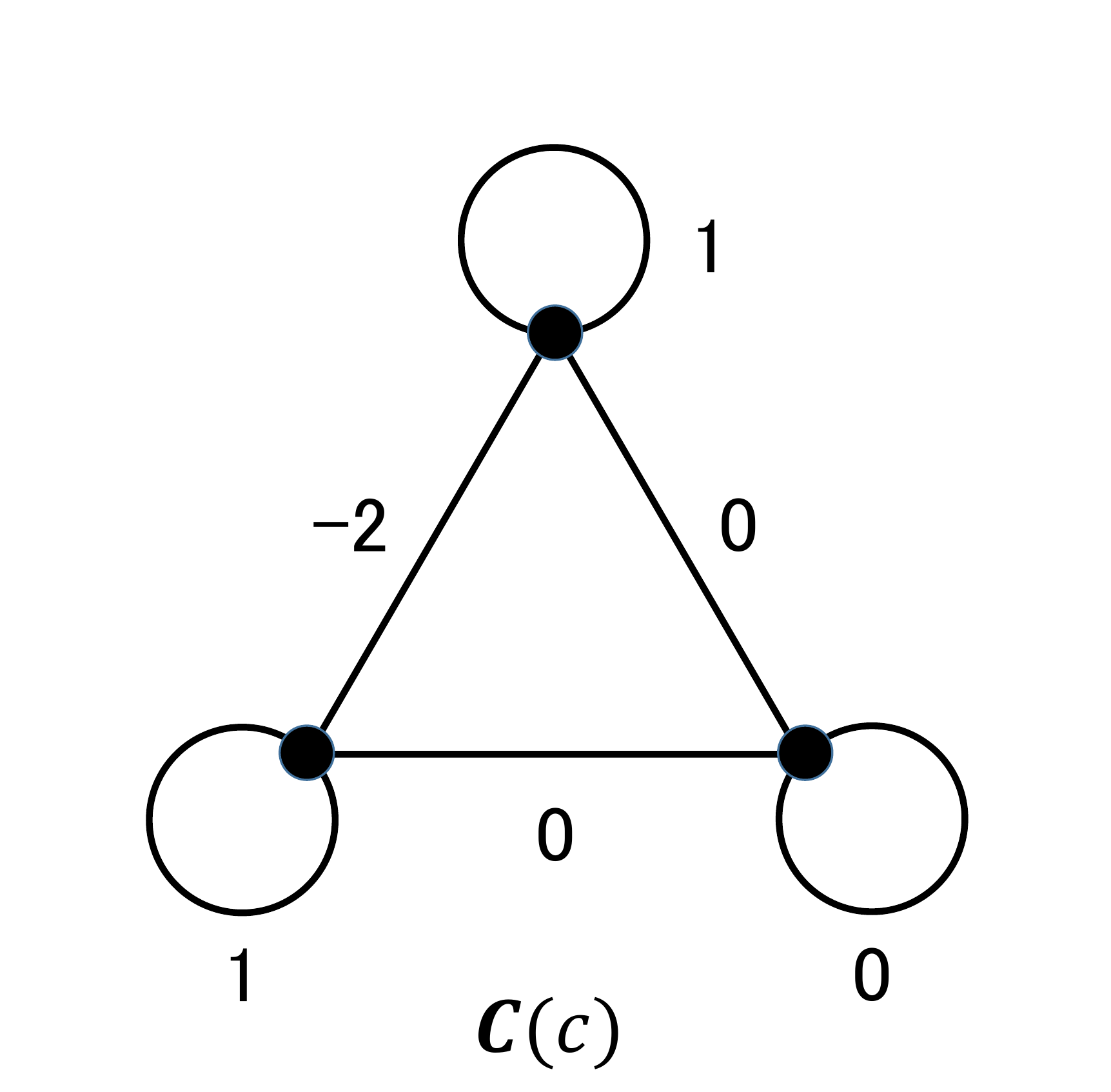}
  \caption{Moves $\bm B(b)$, $\bm B(c)$, $\bm C(b)$, $\bm C(c)$}
  \label{graver_basis_of_A1a}
  \end{center}
\end{figure}

For a move $\Bz \in \kerz A\subset \Z^6$, $\Bz\neq 0$, there exists $\Bw\in {\mathcal G}(A)$ such that
$\Bw + (\Bz- \Bw)=\Bz$
is a conformal sum, i.e., there is no cancellation of signs in this sum.  In this
case we write
\[
\Bw \sqsubseteq \Bz.
\]
Here we are allowing the case $\Bz = \Bw$.

Let $\Bg, \hat \Bg \in \N^6$ be two graphs in the same fiber $\cF_{A,\Bb}$ of $A$. 
Then $\Bz = \Bg-\hat\Bg$
is a move and there exists $\Bw\in {\mathcal G}(A)$ such that $\Bw \sqsubseteq \Bg-\hat\Bg$.
In this case we say that ``$(\Bg, \hat\Bg)$ {\em contains} $\bm w$''.  
Note that $(\Bg,\hat\Bg)$ contains $\bm w$ if and only if
$(\hat\Bg,\Bg)$ contains $-\bm w$. Also if
$(\Bg,\hat\Bg)$ contains $\bm w$ then $\bm g - \bm w \ge 0$ (elementwise) and
\[
|(\bm g - \bm w) - \hat{\bm g}| = |\bm g - \hat\Bg | - |\bm w|.
\]
When $(\Bg,\hat\Bg)$ contains $\bm w$, we denote $\Bg$ by $\Bg_{\Bw}$, provided that 
there is no confusion about $\hat \Bg$.
For example $\Bg_{-\bm A}$ denotes a graph $\Bg$ in $(\Bg, \hat \Bg)$ which contains the negative
of the first column of \eqref{eq:graver3a}.
Now we begin proving $\max_{\Bb\in \N A} {\rm MD}(A_\Bb)  = 3$.

\medskip
\noindent{I. \  Proof of $\max_{\Bb\in \N A} {\rm MD}(A_\Bb)  = 3$}. 

We choose two arbitrary elements of $\mathcal F_{A_{\bm b}, \bm c}=\{\bm y \mid A_{\bm b} \bm y=\bm c\}$ and denote them by $\bm y$ and $\hat{\bm{y}}$.
Although $\bm y$ and $\hat {\bm y}$ are multisets of graphs, by the embedding of a fiber of $A_\Bb$
into a fiber of $A^{(N)}$ discussed after Theorem \ref{thm:main}, 
we index the
graphs of $\bm y$ as $\bm g_{1},\bm g_2,\dots,\bm g_{N}$ and
graphs of $\hat {\bm y}$ as $\hat {\bm g}_1,\hat{\bm g}_2,\dots,\hat{\bm g}_{N}$.
Then %
\begin{align}
&A \bm g_k=A \hat{\bm g}_k=\bm b,  \quad k=1,\dots,N,   \label{omomiwa=b},\\
&\bm g_{1}+\bm g_2+\dots+\bm g_{N}=\hat{\bm g}_1+\hat{\bm g}_2+\dots+\bm{\hat g}_N=\bm{c}
\label{omomiwa=c}.
\end{align}
As in the proof of Theorem \ref{thm:k4-noloop}, let
\begin{align*}
S=\sum_{k=1}^{N}|\bm g_k-\hat{\bm g}_k|.
\end{align*}
Then, $S=0$ implies $\bm y=\hat{\bm y}$.
We will show that if $S>0$ 
there exists an exchange of edges among some fixed number graphs in $\bm y$  or in $\hat{\bm y}$
such that $S$ is decreased.

If $S>0$, there exist a layer $k$ satisfying $\bm g_k\neq\hat{\bm g}_k$.
By $A (\bm g_k-\hat{\bm g}_k)=0$, there exists $\bm w\in {\mathcal G}(A)$ such that 
$(\bm g_k,\hat\Bg_k)$ contains $\Bw$. 
In this case we say that there exists a {\em pattern} $\Bw$ among $\Bz_k=\Bg_k - \hat \Bg_k$, $k=1,\dots,N$.
For example, suppose that the pattern $\bm B(a)$ exists.  Then for some $k$, $z_k(aa)>0$ and $z_k(ab)<0$.
By $0=\sum_{k=1}^N \Bz_k$, 
there have to be some other layers $k', k''$ such that $z_{k'}(aa)<0$ and $z_{k''}(ab)>0$.
In this case we say that the edge $aa$ is ``in shortage'' and the edge $ab$ is ``in excess''
on some layers other than $k$.

At this point we consider an easy case to decrease $S$, where there are $\bm g_{\bm A}$ and $\bm g_{-\bm A}$, i.e., there are $k$ and $k'$ such that $(\bm g_k,\hat \Bg_k)$ contains the move $\bm A$ and 
$(\bm g_{k'},\hat \Bg_{k'})$ contains the move $-\bm A$.
Then we can apply an exchange of edges 
$(\Bg_{\bm A}, \Bg_{-\bm A}) \rightarrow (\Bg_{\bm A}', \Bg_{-\bm A}')$,
where 
\begin{align*}
\bm g_{A}'&=\bm g_{A}-\bm e_{aa}-\bm e_{bb}-\bm e_{cc}+\bm e_{ab}+\bm e_{bc}+\bm e_{ca}=\bm g_{A}-\bm A,\\
\bm g_{-A}'&=\bm g_{-A} +\bm e_{aa}+\bm e_{bb}+\bm e_{cc} -\bm e_{ab}-\bm e_{bc}-\bm e_{ca}=\bm g_{-A}+\bm A.
\end{align*}
By this degree-two move  (\ref{omomiwa=b}) and  (\ref{omomiwa=c}) are conserved.
Obviously $\bm g_{A}'$ and $\bm g_{-A}'$ are non-negative and
$S$ is immediately decreased.
Similar consideration applies to other nine pairs of moves
$(\bm B(a), -\bm B(a))$, $\dots$, $(\bm D(c), -\bm D(c))$.
Therefore, from now on, we ignore the case that there are two layers containing 
any of these 10 pairs.
Also note that by symmetry between $\bm y$ and $\hat{\bm y}$, we only need to consider
one of $\bm A$ or $- \bm A$.

We now distinguish various cases.
We first consider the case that the pattern $\bm A$ (or $-\bm A$) exists.
\begin{description}
\item[Case 1] $\bm A$ exists.\\
We are assuming that there exists some $k$ such that $(\Bg_k,\hat \Bg_k)$ contains $\bm A$. 
There are three subcases depending on whether the pattern  $\bm B$ exists or not on some other layer $k'\neq k$.
By symmetry among $a, b, c$, we only need to consider $\bm B(a)$.

\begin{description}
\item[Case 1-1] $\bm B(a)$ exists.\\
Because of the existence of $\bm A$ and $\bm B(a)$, the edge $aa$ is in shortage 
on some other layer $k''$.
The possible patterns are $-\bm C(c), -\bm C(b), -\bm D(b)$, or $\bm D(c)$.
By symmetry between $b$ and $c$, we only need to consider $-\bm C(c)$ or $-\bm D(b)$.
If $-\bm C(c)$ exists then $S$ is decreased by
\[
\bm g_{\bm A}'=\bm g_{\bm A}-\bm C(c), \qquad 
\bm g_{-\bm C(c)}'=\bm g_{-\bm C(c)}+\bm C(c)
\]
and if $-\bm D(b)$ exists then $S$ is decreased by
\[
\bm g_{\bm A}'=\bm g_{\bm A}-\bm C(c), \qquad 
\bm g_{-\bm D(b)}'=\bm g_{-\bm D(b)}+\bm C(c).
\]
Here note that $\bm g_{-\bm D(b)}+\bm C(c)\ge 0$. We omit this kind of remark on non-negativity
for the rest this proof.
\item[Case 1-2]  $-\bm B(a)$ exists.\\
In this case we look at $\hat \By$.  $S$ is decreased by 
\begin{align*}
\hat \Bg_{-A}'=\hat \Bg_{-A}+\bm B(a), \qquad
\hat \Bg_{B(a)}'=\hat \Bg_{B(a)}-\bm B(a).
\end{align*}

\item[Case 1-3]  None of $\bm B(a)$, $-\bm B(a)$ exists.\\
This case can be handled as in Case 1-1, since the edge $aa$ is in shortage.
\end{description}
\end{description}

From now on, we assume that pattern $\pm \bm A$ does not exist.
For Case 2, we consider the existence of the pattern $\pm \bm D$.

\begin{description}
\item[Case 2] $\bm D$ exists.\\
By symmetry we consider the case that there is some layer containing $\bm D(a)$.
Since there is $\bm D(a)$, the edge $bb$ is in excess on some other layer.
The possible patterns for this excess are  $\bm C(a), \bm C(c), \bm D(c)$, or $\bm B(b)$.
Also the edge $cc$ is in shortage.
The possible patterns for this shortage are $-\bm C(a), -\bm C(b), \bm D(b)$, or $-\bm B(c)$.

Note that we are assuming that $\bm C(a)$ and $-\bm C(a)$ do not simultaneously exist, i.e., 
at least one of $\bm C(a)$ and $-\bm C(a)$ does not exist.  If $\bm C(a)$ does not exist,
then at least one of $\bm C(c), \bm D(c)$, or $\bm B(b)$ exist.  Similarly if 
$-\bm C(a)$ dos not exist at least one of $-\bm C(b), \bm D(b)$ or $-\bm B(c)$ exist.
Hence at least one of $\bm C(c), \bm D(c), \bm B(b), -\bm C(b), \bm D(b)$, $-\bm B(c)$ exist.

Now by simultaneous symmetry $(b,\By,\bm D(a)) \leftrightarrow (c,\hat \By,-\bm D(a))$,  
we only need to consider one of $\bm C(c)$ and $-\bm C(b)$, one of $\bm D(c)$ and $\bm D(b)$,
and one of $-\bm B(c)$ and $\bm B(b)$.  Hence we will examine the cases
$\bm C(c)$,  $\bm D(c)$, $-\bm B(c)$, in turn.
\begin{description}
\item[Case 2-1] $\bm D(a)$ and $\bm C(c)$ exist.\\
$S$ is decreased by
\begin{align*}
\bm g_{\bm D(a)}'=\bm g_{\bm D(a)}+\bm C(c),\qquad 
\bm g_{\bm C(c)}'=\bm g_{\bm C(c)}-\bm C(c).
\end{align*}

\item[Case 2-2] $\bm D(a)$ and $\bm D(c)$ exist.\\
$S$ is decreased by 
\begin{align*}
\bm g_{\bm D(a)}'=\bm g_{\bm D(a)}-\bm D(a),\qquad
\bm g_{\bm D(c)}'=\bm g_{\bm D(c)}+\bm D(a).
\end{align*}

\item[Case 2-3]$\bm D(a)$ and $-\bm B(c)$ exist.\\
$S$ is decreased by 
\begin{align*}
\bm g_{\bm D(a)}'=\bm g_{\bm D(a)}-\bm B(c),\qquad 
\bm g_{-\bm B(c)}'=\bm g_{-\bm B(c)}+\bm B(c).
\end{align*}

\end{description}
\end{description}

We have now examined all possible cases where $\pm \bm D$ exists.
From now on, we may assume that pattern $\pm \bm D$ does not exist.

We now consider the case that the pattern $\pm \bm B$ exists.
\begin{description}
\item[Case 3]$\bm B$ exists.\\
By symmetry we assume that $\bm B(a)$ exists.
Then because of the shortage of  $aa$ on  other layers,
there exists pattern $-\bm C(c)$ or $-\bm C(b)$.
Because of symmetry of vertices  $b$ and $c$, it is enough to consider $-\bm C(c)$ only.
Then because of the excess of $bb$, there exists pattern $\bm B(b)$ or $\bm C(a)$.

\begin{description}
\item[Case 3-1]$\bm B(a)$, $-\bm C(c)$ and $\bm B(b)$ exist.\\
$S$ is decreased by 
\begin{equation*}
\bm g_{B(a)}'=\bm g_{B(a)}-\bm B(a),\qquad
\bm g_{-C(c)}'=\bm g_{-C(c)}+\bm C(c),\qquad 
\bm g_{B(b)}'=\bm g_{B(b)}-\bm B(b).
\end{equation*}

\item[Case 3-2] $\bm B(a)$, $-\bm C(c)$ and $\bm C(a)$ exist.\\
Note that already $\bm D(c)$ and $-\bm D(a)$ do not exist by our assumption.
Also in the previous case we considered the existence of $\bm B(b)$.
Hence here we consider the case that  $\bm D(c)$, $-\bm D(a)$ and $\bm B(b)$ do not exist, but
$\bm C(a)$ exists. Then by the shortage of $cc$, 
there is a pattern $-\bm C(b)$ or $-\bm B(c)$.

\begin{description}
\item[Case 3-2-1]$\bm B(a)$, $-\bm C(c)$, $\bm C(a)$ and $-\bm C(b)$ exist.\\
This case is difficult.  We renumber this case as Case 4 and will discuss this case below.

\item[Case 3-2-2] $\bm B(a)$, $-\bm C(c)$, $\bm C(a)$ and $-\bm B(c)$ exist.\\
This case is also difficult.  We renumber this case as Case 5 and will discuss this case below.
\end{description}
\end{description}
\end{description}

So far we did not use the fact that all graphs $\Bg_1, \dots, \Bg_N$ belong to the same fiber ${\mathcal F}_{A,\Bb}$ of $A$.  
Our argument before Case 3-2-1 apply not only to $A_{\Bb}$, but also
to the higher Lawrence lifting $A^{(N)}$.   However there is a gap between two sides of  \eqref{eq:3-loop-result}. In order to show the left-hand side
$\max_{\Bb\in \N A} {\rm MD}(A_\Bb)  = 3$ we need to use that fact that $\Bg_1, \dots, \Bg_N$ belong to the same fiber.

We now look at Case 4 from this viewpoint.

\begin{description}
\item[Case 4] $\bm B(a)$, $-\bm C(c)$, $\bm C(a)$ and $-\bm C(b)$ exist.\\
First note that the existence $\bm B(a)$ implies $\deg(a)\ge 2$.
Also the existence of $\bm C(a)$ implies $\deg(b)\ge 2$, $\deg(c)\ge 2$.
Hence the degree of each vertex is at least two.  Then $\bm g_{\bm B(a)}$ has additional edges
connecting to $b$ and to $c$. The possible combinations of edges are
\begin{quote}
1) $bc$ alone, \ 2) the pair $(ab, ac)$, \ 3) the pair  $(bb,ac)$, \\
4) the pair $(bc, cc)$, \  5) the pair $(bb, cc)$, or \  6) the case that   
$\Bg_{\bm B(a)}$ has two $\bm B(a)$.
\end{quote}
These six cases are depicted in Figure \ref{B(a)}.
Existence of an additional edge is shown as the weight of the form $+p-q$ in Figure \ref{B(a)}.
$+p$ means that we can subtract $p$ edges without producing a negative weight.

Consider the edge $aa$. The weight of $aa$ in $\bm C(a)$ is zero. On the other hand
in both $-\bm C(c)$ and $-\bm C(b)$ its weight is $-1$.  This extra shortage of $aa$
implies that there exists another pattern $\bm B(a)$ in addition to the already existing $\bm B(a)$, possibly on the same layer as the already existing one or on another layer.
The former case corresponds to 6) above.  

Also note that $-\bm C(c)$ and $-\bm C(b)$ may be on the same layer, but in this case
the weight of the self-loop $aa$ on the layer is less than or equal to $-2$ and our proof is
not affected.

\begin{figure}[htbp]
  \begin{center}
\begin{tabular}{ccc}
 \includegraphics[width=40mm]{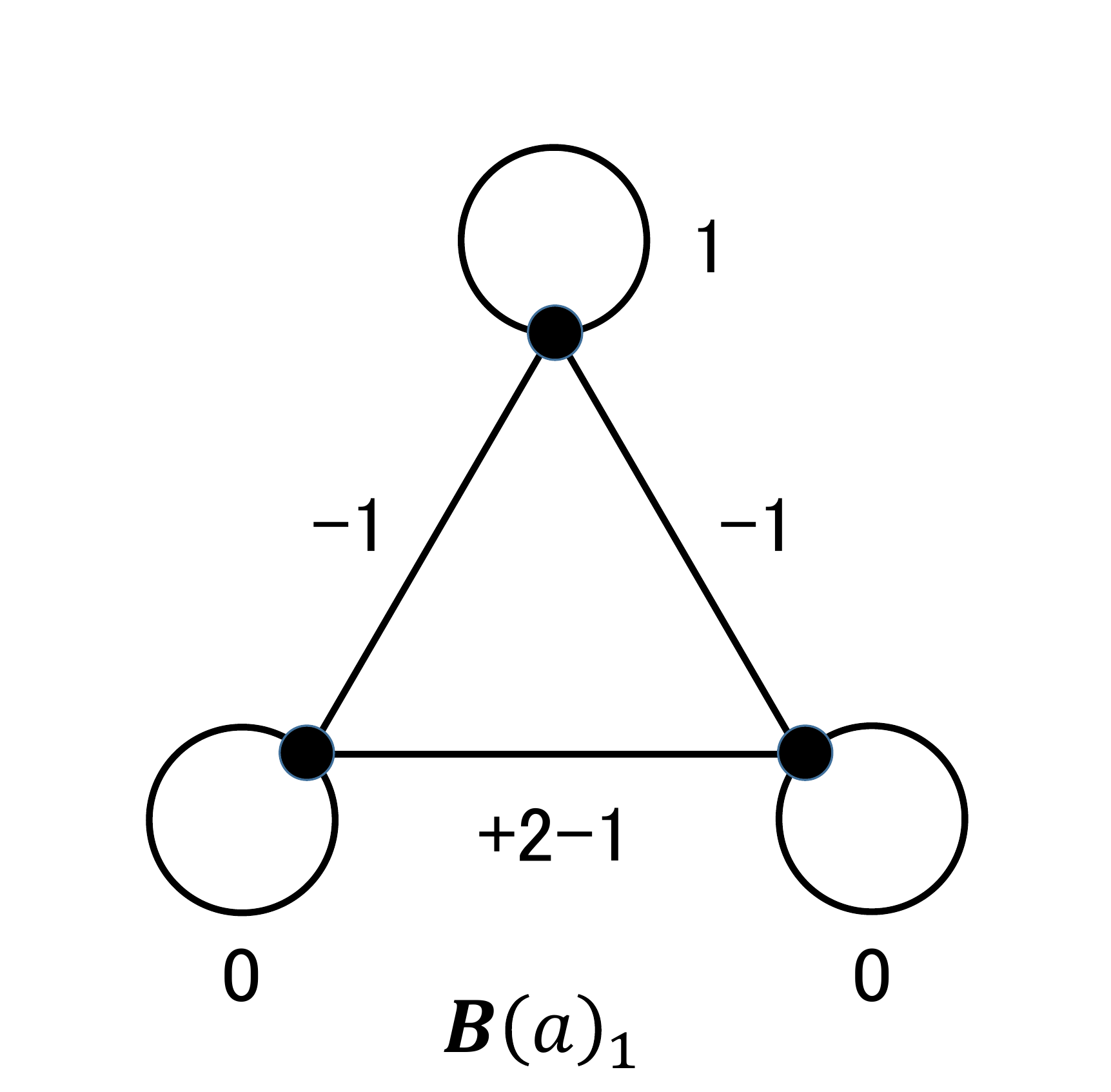}&
  \includegraphics[width=40mm]{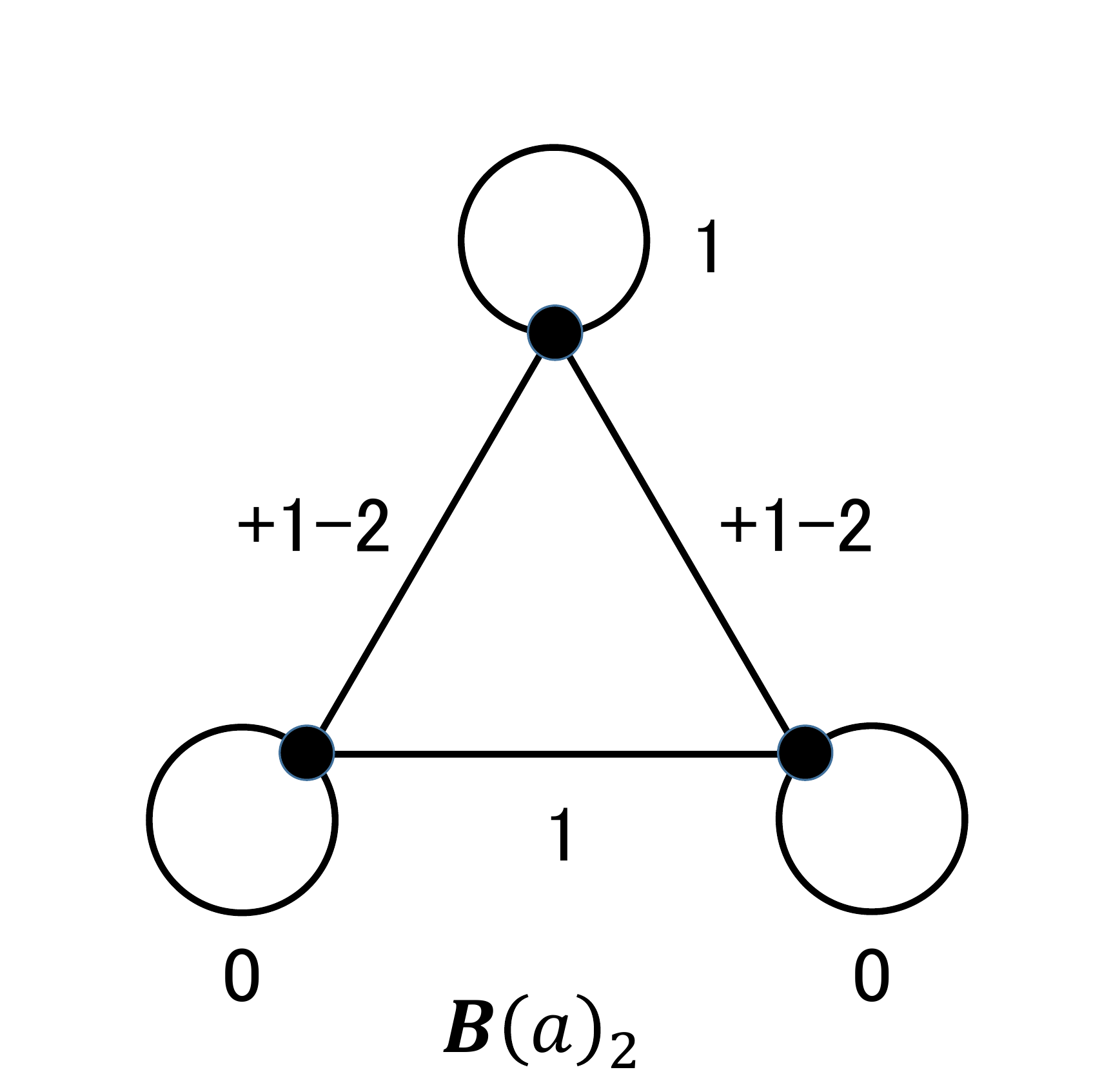}&
  \includegraphics[width=40mm]{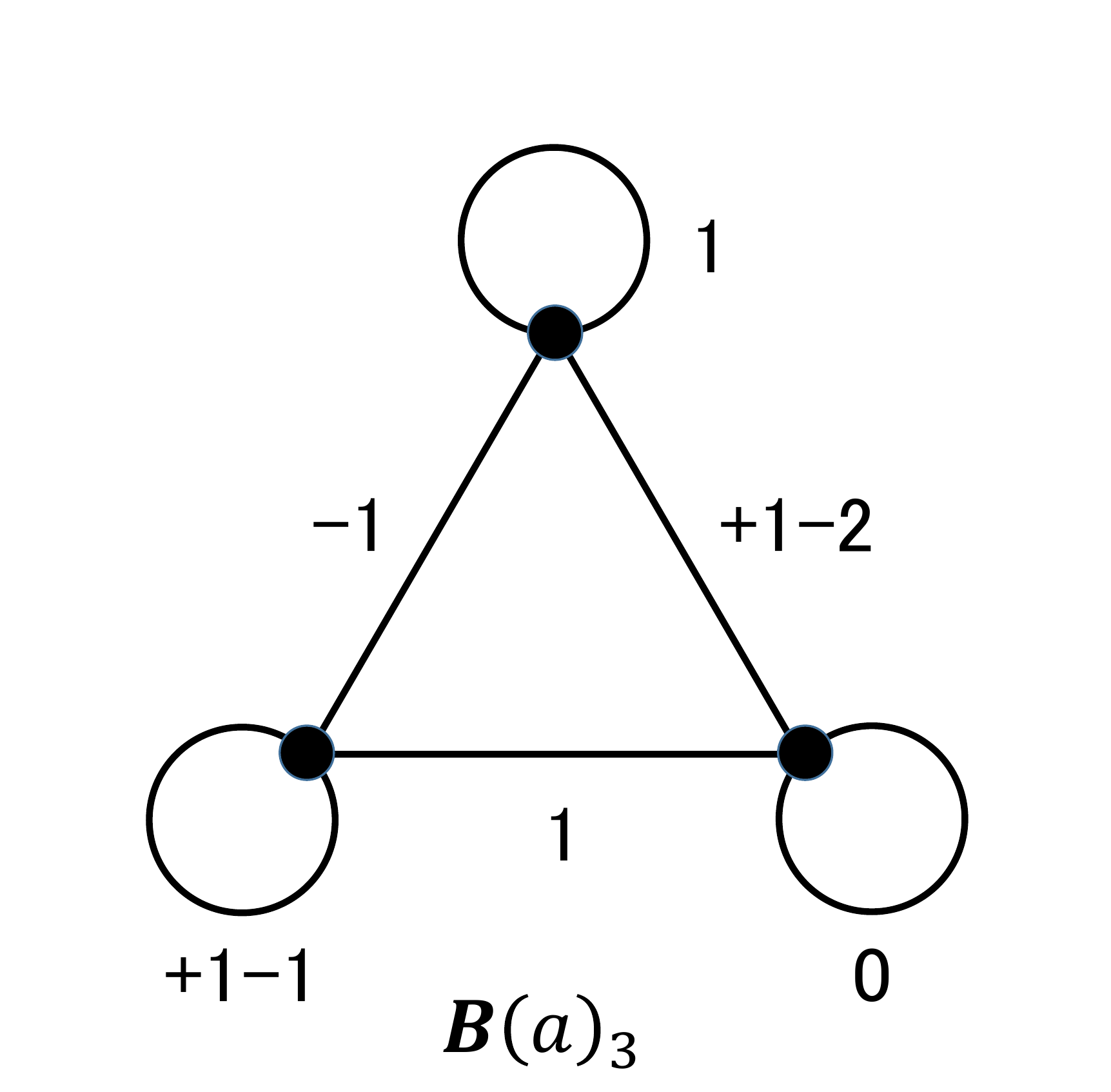}\\
   \includegraphics[width=40mm]{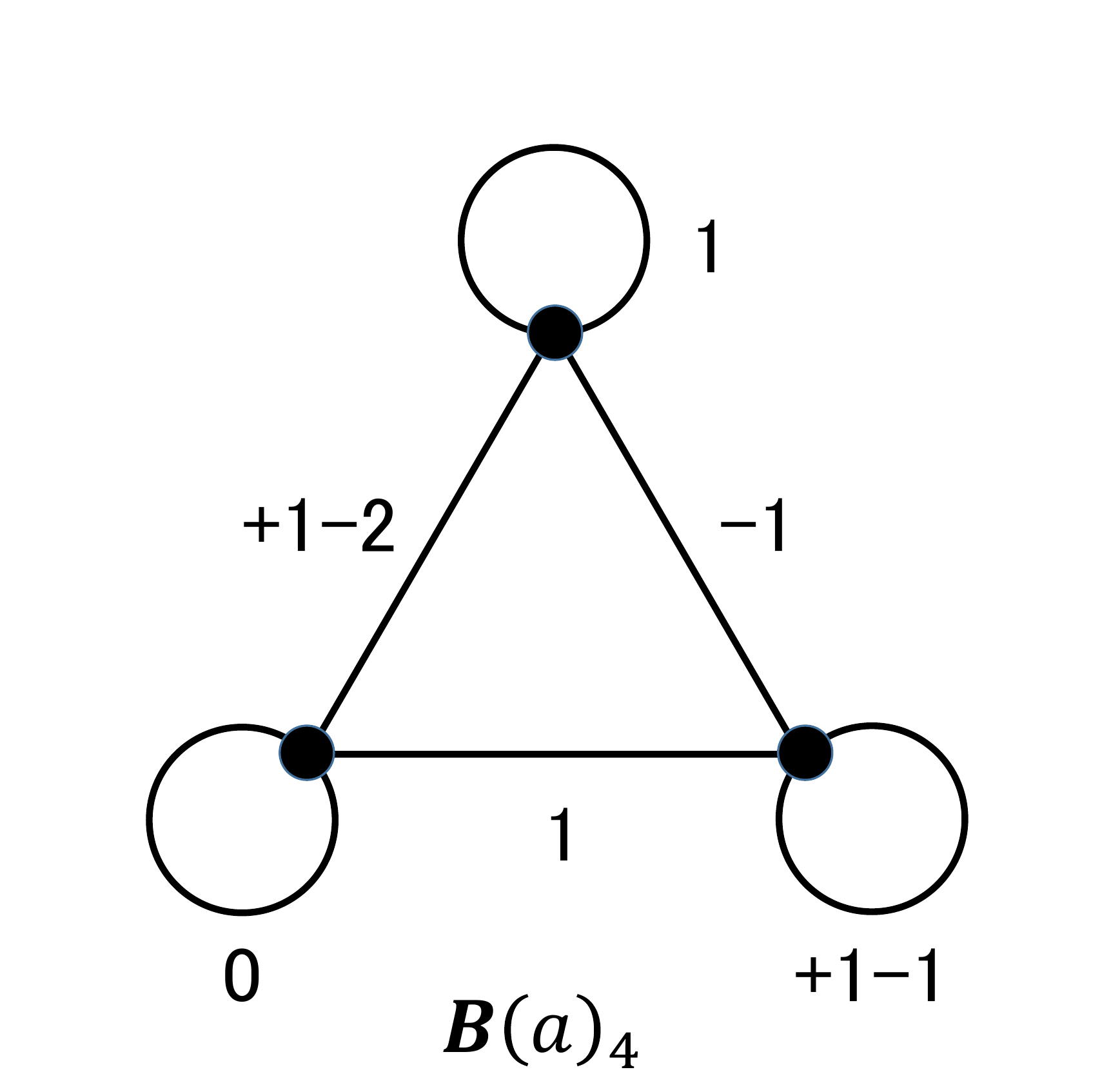}&
  \includegraphics[width=40mm]{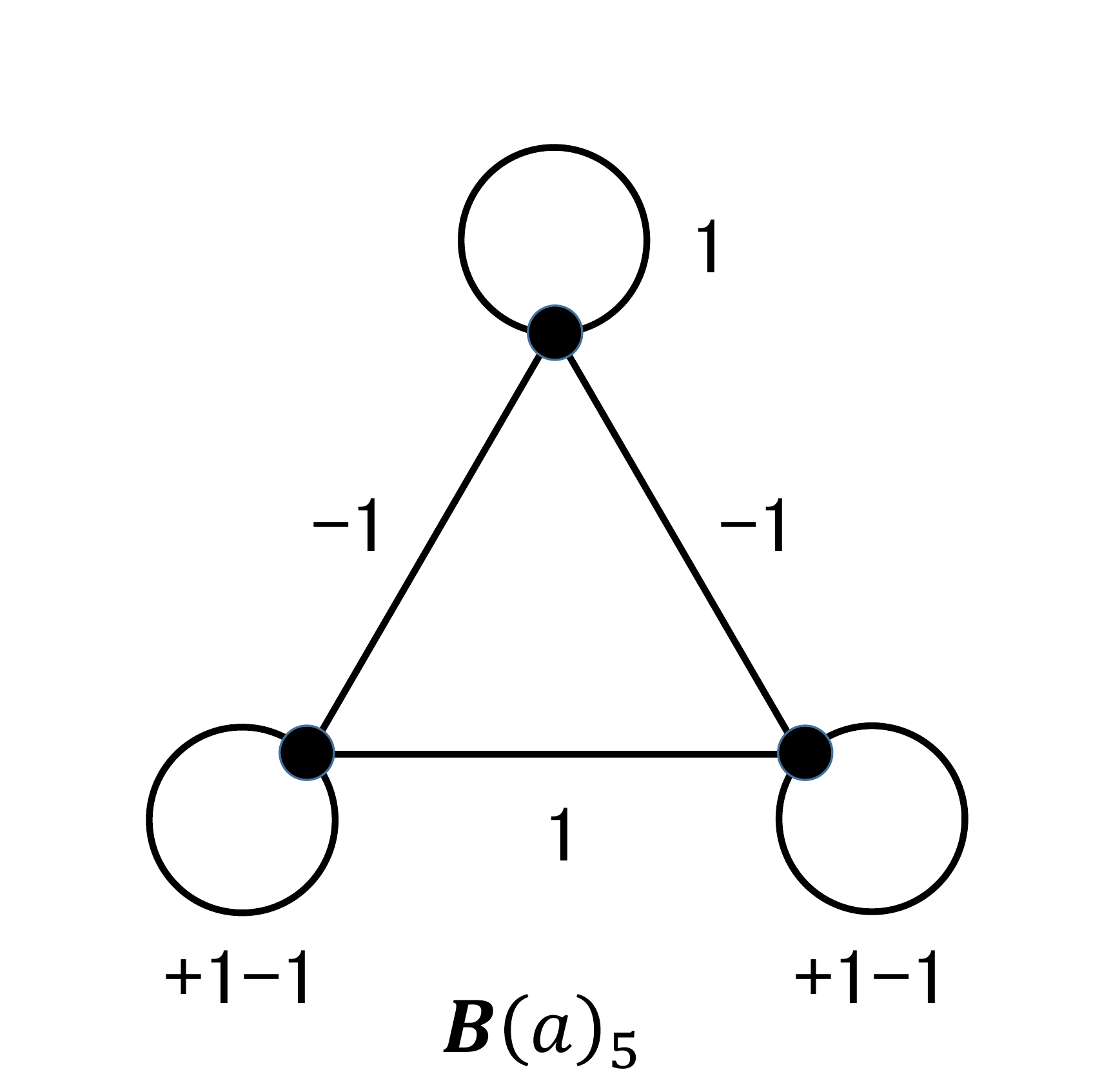}&
   \includegraphics[width=40mm]{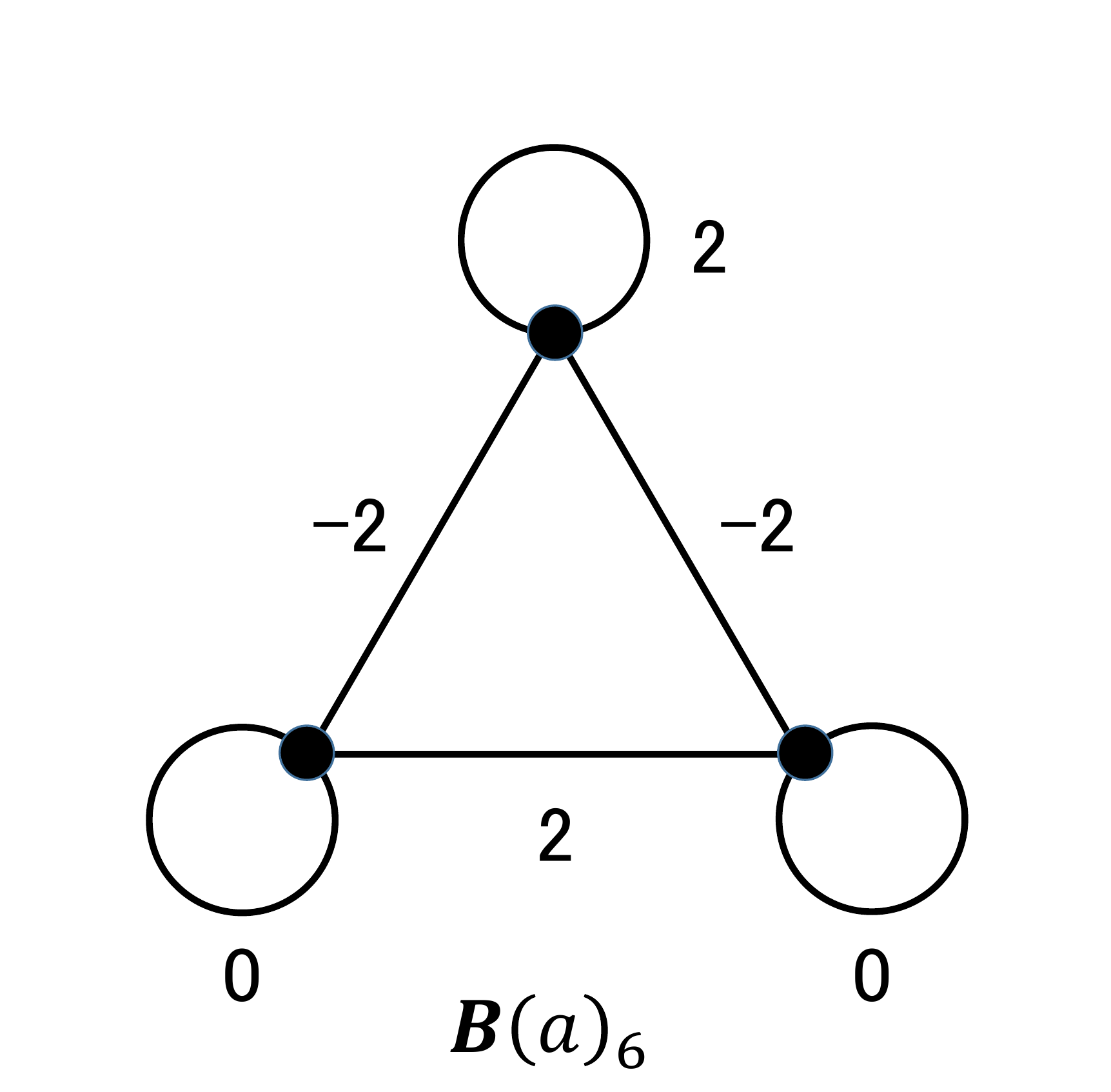}\\
\end{tabular}
\caption{$\bm B(a)_1$, $\bm B(a)_2$, $\bm B(a)_3$, $\bm B(a)_4$, $\bm B(a)_5$, $\bm B(a)_6$}
\label{B(a)}
\end{center}
\end{figure}

\begin{description}
\item[Case 4-1]$\bm B(a)$, $-\bm C(c)$, $\bm C(a)$, $-\bm C(b)$ and  $\bm B(a)_1$  exist.\\
$S$ is decreased by 
\begin{align*}
\bm g_{\bm B(a)_1}'=\bm g_{\bm B(a)_1}+\bm C(a),\qquad 
\bm g_{\bm C(a)}'=\bm g_{\bm C(a)}-\bm C(a).
\end{align*}

\item[Case 4-2] $\bm B(a)$, $-\bm C(c)$, $\bm C(a)$, $-\bm C(b)$ and  $\bm B(a)_2$  exist.\\
By 
\begin{align*}
\bm g_{\bm B(a)_2}'=\bm g_{\bm B(a)_2}+\bm B(a),\qquad
\bm g_{\bm B(a)}'=\bm g_{\bm B(a)}-\bm B(a),
\end{align*}
$S$ is not changed, but $\bm g_{\bm B(a)_2}$ now has $\bm B(a)_6$. Then we will decrease $S$ in 
Case 4-6 below.

\item[Case 4-3] $\bm B(a)$, $-\bm C(c)$, $\bm C(a)$, $-\bm C(b)$ and  $\bm B(a)_3$  exist.\\
$S$ is decreased by 
\begin{align*}
\bm g_{\bm B(a)_3}'=\bm g_{\bm B(a)_3}-\bm C(c),\qquad
\bm g_{-\bm C(c)}'=\bm g_{-\bm C(c)}+\bm C(c).
\end{align*}

\item[Case 4-4] $\bm B(a)$, $-\bm C(c)$, $\bm C(a)$, $-\bm C(b)$ and  $\bm B(a)_4$  exist.\\
Because of the symmetry of $b$ and $c$, we can decrease $S$ as in Case 4-3.

\item[Case 4-5] $\bm B(a)$, $-\bm C(c)$, $\bm C(a)$, $-\bm C(b)$ and  $\bm B(a)_5$  exist.\\
$S$ is decreased by 
\begin{align*}
\bm g_{\bm B(a)_5}'=\bm g_{\bm B(a)_5}-\bm C(c),\qquad
\bm g_{-\bm C(c)}'=\bm g_{-\bm C(c)}+\bm C(c).
\end{align*}

\item[Case 4-6] $\bm B(a)_6$, $-\bm C(c)$, $\bm C(a)$ and $-\bm C(b)$ exist.\\
$S$ is decreased by 
\begin{align*}
\bm g_{\bm B(a)_6}'=\bm g_{\bm B(a)_6}+\bm C(a),\qquad
\bm g_{\bm C(a)}'=\bm g_{\bm C(a)}-\bm C(a).
\end{align*}
\end{description}
\end{description}

Now we look at Case 5.

\begin{description}
\item[Case 5] $\bm B(a)$, $-\bm C(c)$, $\bm C(a)$ and $-\bm B(c)$ exist.\\
As in Case 4 $\deg(c)\ge 2$ by the existence of $\bm C(a)$.
Then $\Bg_{-\bm C(d)}$ has additional edges connecting to $c$. The possible cases
are,  \ 1) $cc$ alone, \ 2) at least one $ac$, or \ 3) 2 $bc$'s.
These three cases are depicted in Figure \ref{-C(c)}.

\begin{figure}[htbp]
  \begin{center}
  \begin{tabular}{ccc}
  \includegraphics[width=40mm]{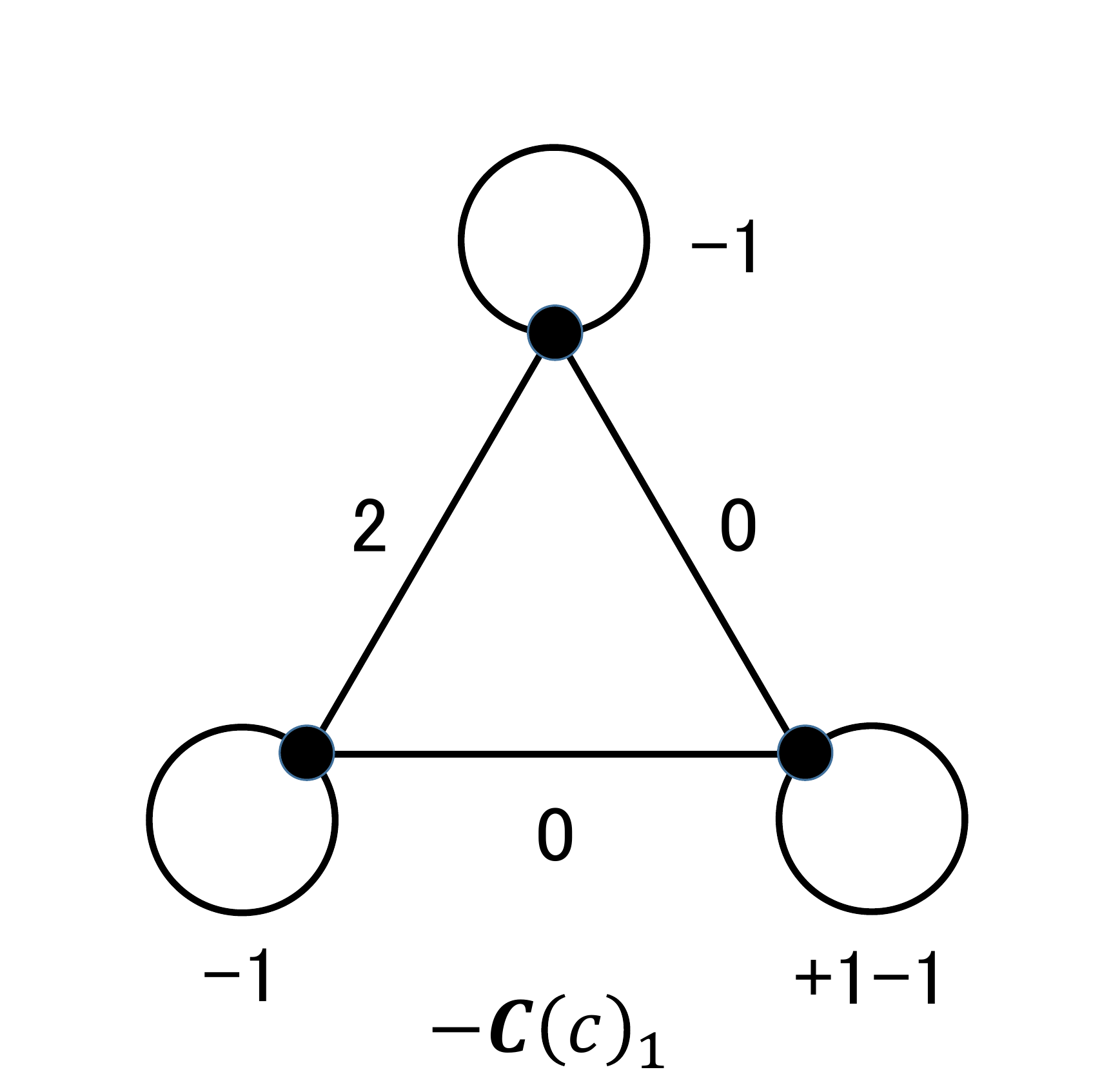} &
  \includegraphics[width=40mm]{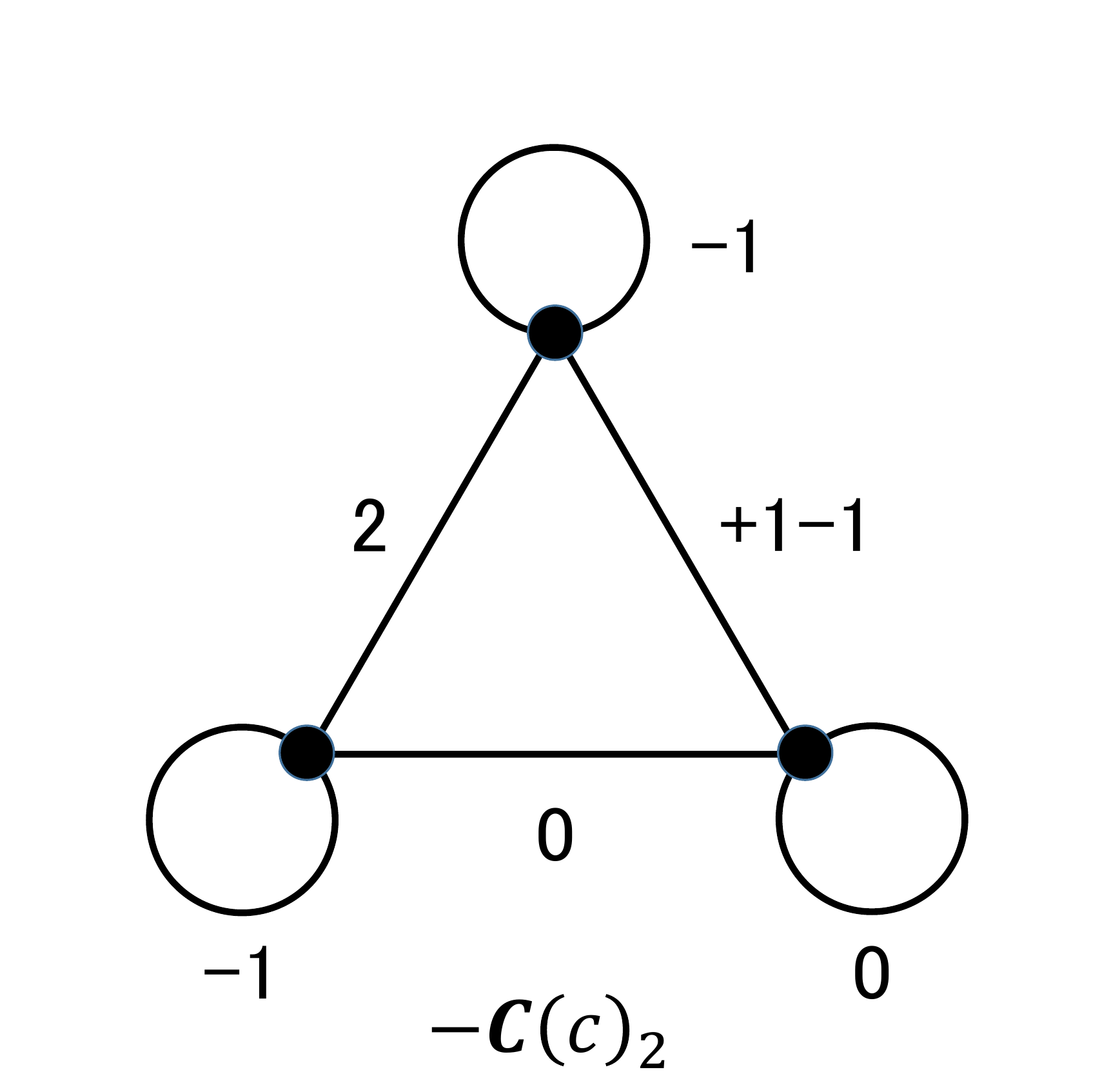} &
  \includegraphics[width=40mm]{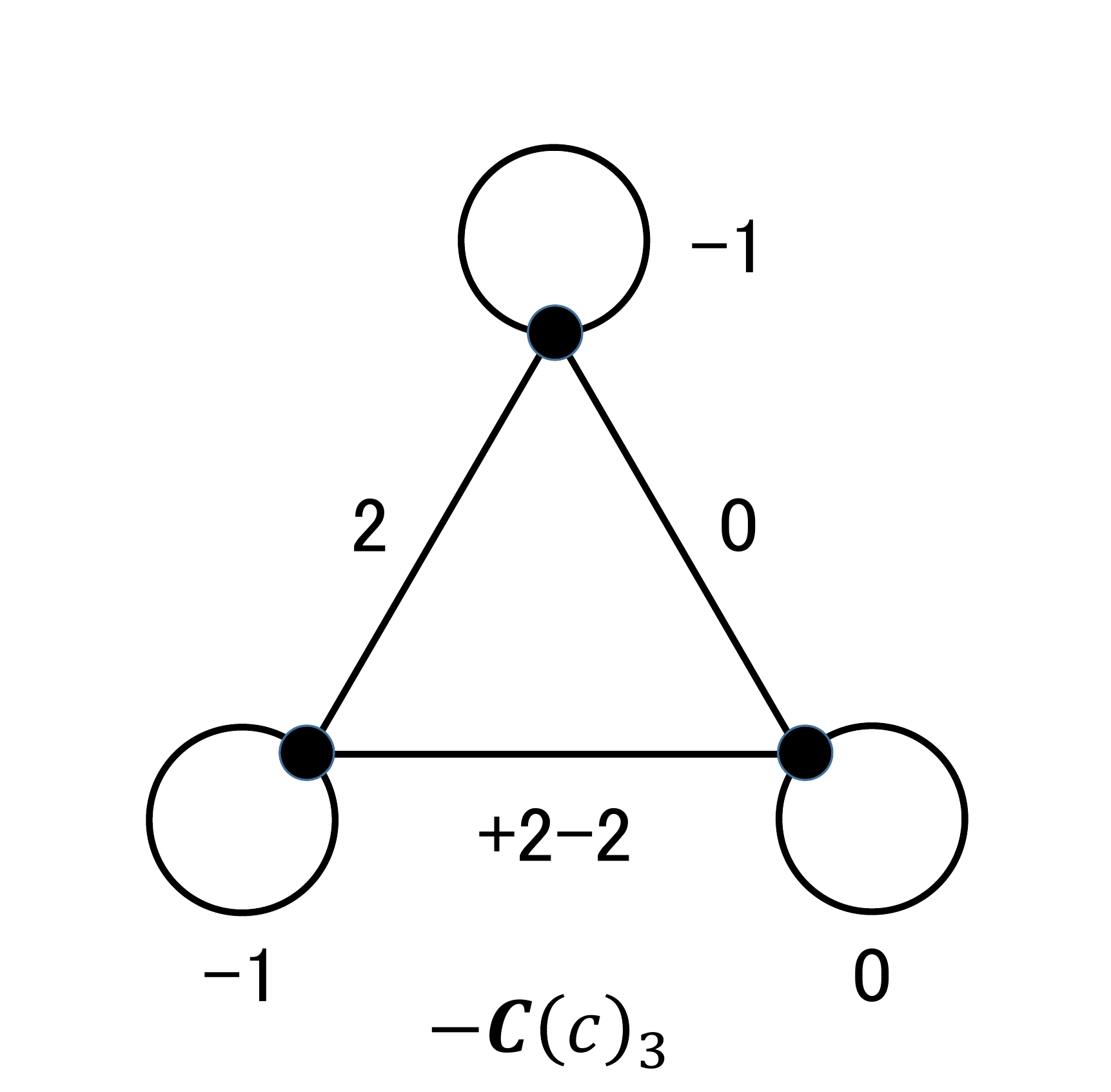}
  \end{tabular}
  \caption{$-\bm C(c)_1$, $-\bm C(c)_2$, $-\bm C(c)_3$}
  \label{-C(c)}
  \end{center}
\end{figure}

\begin{description}
\item[Case 5-1] $\bm B(a)$, $-\bm C(c)_1$, $\bm C(a)$ and $-\bm B(c)$ exist.\\
$S$ is decreased by 
\begin{align*}
\bm g_{-C(c)_1}'=\bm g_{-C(c)_1}-\bm B(c),\qquad
\bm g_{-B(c)}'=\bm g_{-B(c)}+\bm B(c).
\end{align*}

\item[Case 5-2] $\bm B(a)$, $-\bm C(c)_2$, $\bm C(a)$ and $-\bm B(c)$ exist.\\
$S$ is decreased by
\begin{align*}
\bm g_{-\bm C(c)_2}'=\bm g_{-\bm C(c)_2}+\bm B(a),\qquad
\bm g_{\bm B(a)}'=\bm g_{\bm B(a)}-\bm B(a).
\end{align*}

\item[Case 5-3] $\bm B(a)$, $-\bm C(c)_3$, $\bm C(a)$ and $-\bm B(c)$ exist.\\
$S$ is decreased by 
\begin{align*}
\bm g_{-\bm C(c)_3}'=\bm g_{-\bm C(c)_3}+\bm C(a),\qquad
\bm g_{\bm C(a)}'=\bm g_{\bm C(a)}-\bm C(a).
\end{align*}
\end{description}
\end{description}

We have eliminated patterns $\bm A$, $\bm D$ and $\bm B$.  The remaining pattern is $\bm C$.
\begin{description}
\item[Case 6] $\bm C$ exists.\\
Suppose that $\bm C(a)$ exists.
In the absence of $\pm \bm A$, $\pm \bm D$ and $\pm \bm B$ and the pair
$(\bm C(a), -\bm C(a))$, the excess of $bc$ can not be canceled.
Hence this case is impossible.
\end{description}

We have now eliminated all the patterns.  We now review the moves we needed to decrease $S$.
Except for Case 3-1, all the moves were  exchanges of edges between two graphs, which correspond 
to moves of degree two.  In Case 3-1 we needed a move of degree three. Hence
$\max_{\Bb\in \N A} {\rm MD}(A_\Bb)  \le  3$. Together with \eqref{eq:a222}
we have $\max_{\Bb\in \N A} {\rm MD}(A_\Bb)  = 3$.

\bigskip
\noindent
II. \ Proof of ${\rm MC}(A) = 5$.

Next we show ${\rm MC}(A) = 5$.
As discussed above, our argument before Case 3-2 applies also to the
higher Lawrence lifting $A^{(N)}$.
$\Bb$'s can be different in different layers in \eqref{omomiwa=b}. 
Therefore we need to check  Case 4 and Case 5 again for higher Lawrence lifting.
The argument is actually simple.
In Case 4, we consider at most five patterns (at most five graphs) which consist of two $\bm B(a)$'s, 
$-\bm C(c)$, $\bm C(a)$ and $-\bm C(b)$, whose sum is the zero vector.
This shows that a move of type at most five decreases $S$ in the Case 4 for $A^{(N)}$.
In Case 5 we consider at most four patterns (at most four graphs),  whose sum is the zero vector.
Hence a move of type at most four decreases $S$ in the Case 5 for $A^{(N)}$.
This proves  ${\rm MC}(A) \le 5$.

To establish the equality, we construct an indispensable move whose type is five.
Let ${\bm g}_1,\dots,{\bm g}_5$ be graphs displayed in the upper row
and %
let $\hat{\bm g}_1,\dots,\hat{\bm g}_5$ be graphs displayed in the lower row of
Figure \ref{g_5}.
We show that 
\[
\Bz = (\Bz_1, \dots,\Bz_5)=(\bm g_1,\dots,\bm g_5) -(\hat{\bm g}_1,\dots,\hat{\bm g}_5)
\]
is  an indispensable move $A^{(5)}$ by Proposition \ref{lem:indispensable-for-lawrence}.
First, $\Bz_i$, $i=1,\dots,5$, are patterns $\bm B$ or $\bm C$ and they are
indispensable moves for $A$.  By the argument after Proposition \ref{lem:indispensable-for-lawrence}
we can start from arbitrary slice $\Bz_k$.
\begin{figure}[htbp]
  \begin{center}
\includegraphics[width=37mm]{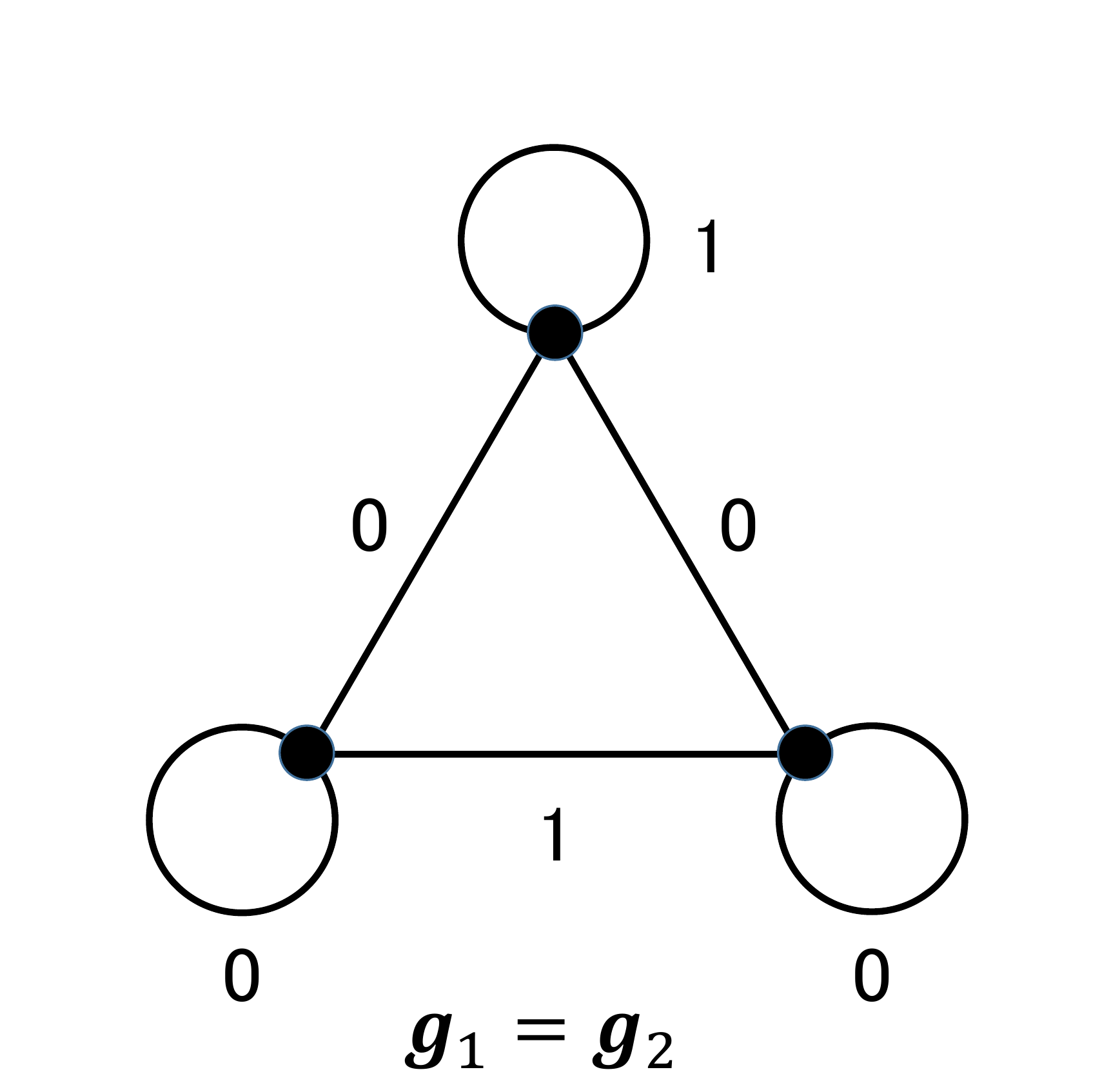}
\includegraphics[width=37mm]{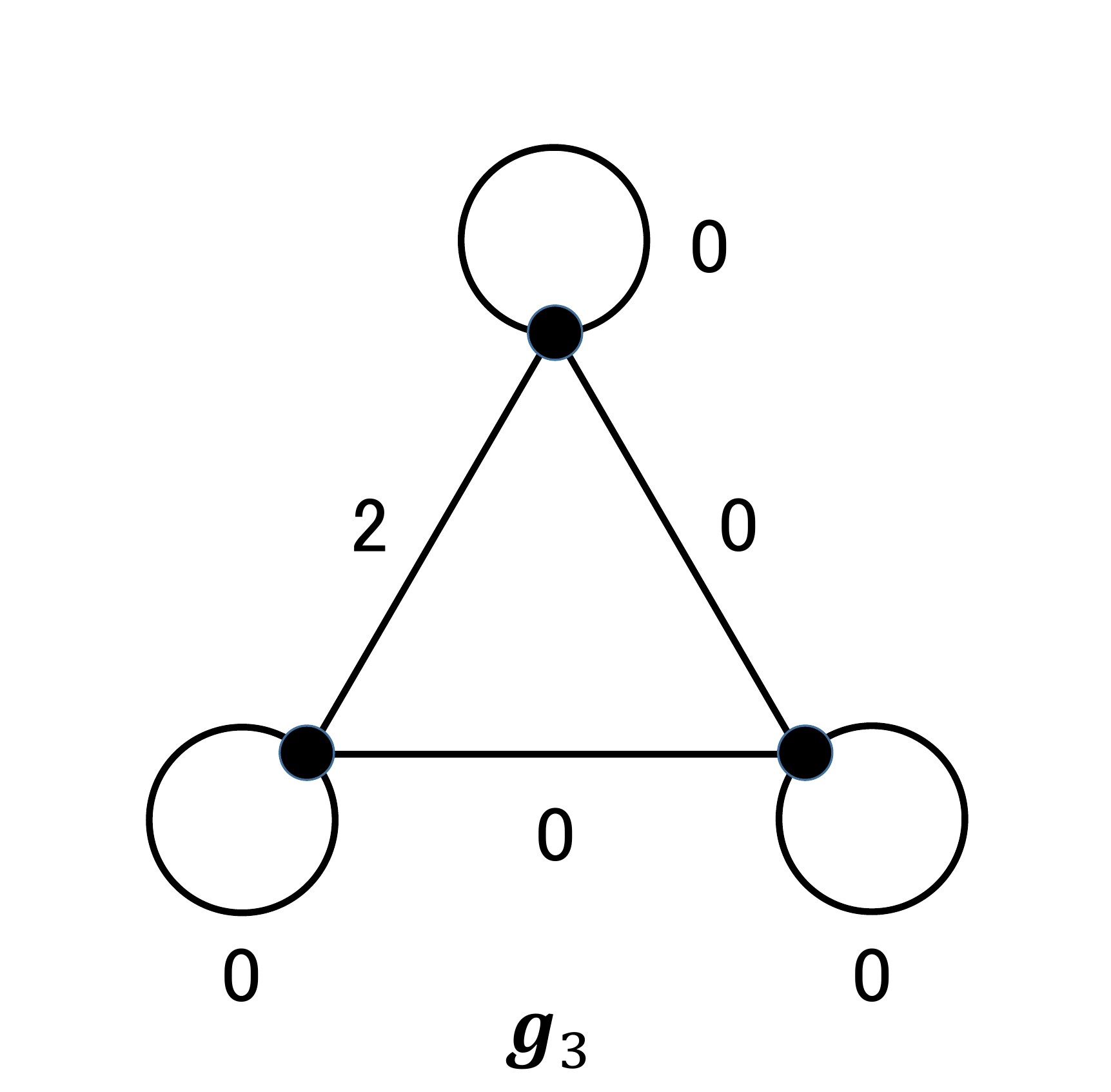}
\includegraphics[width=37mm]{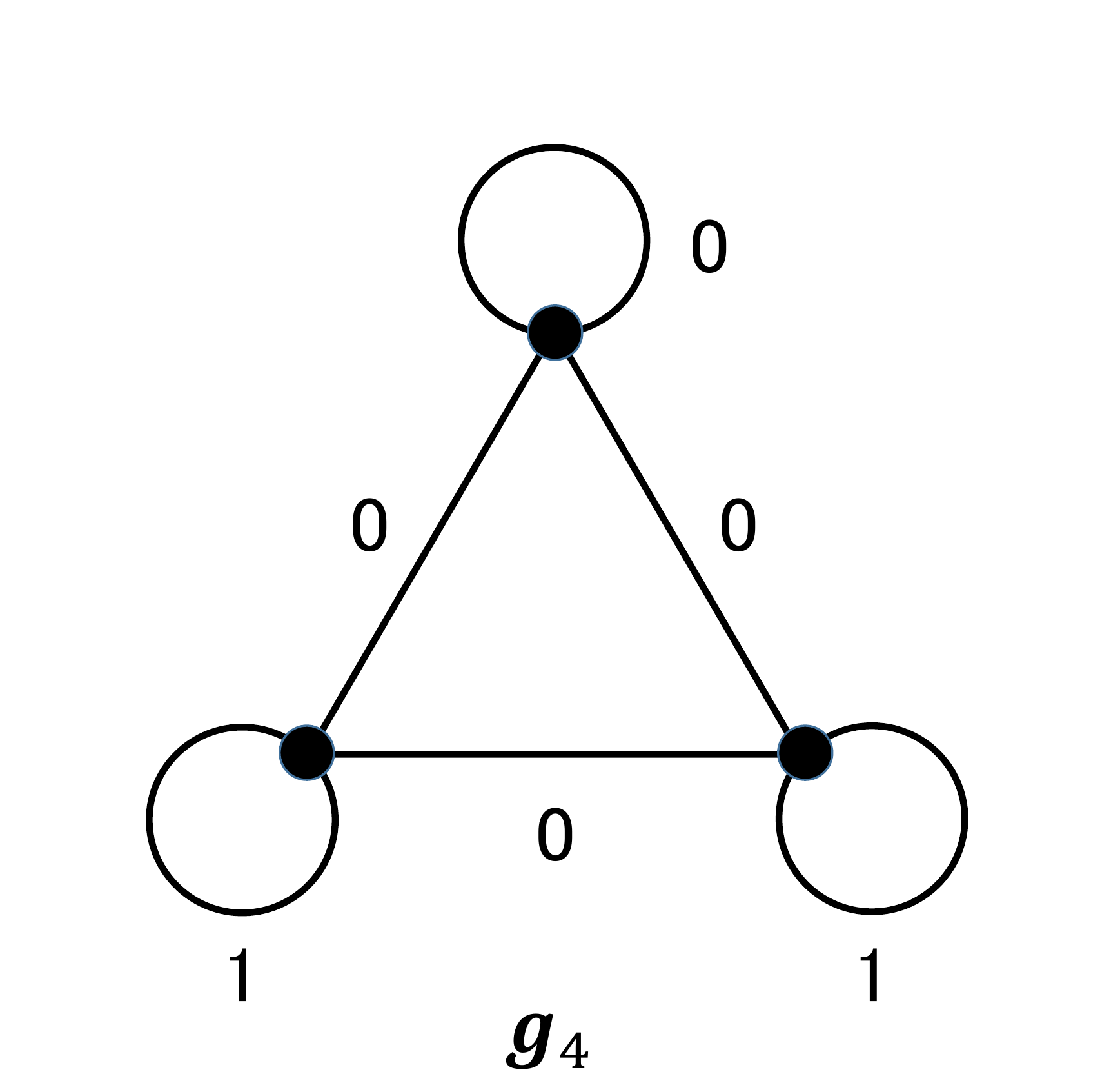}
\includegraphics[width=37mm]{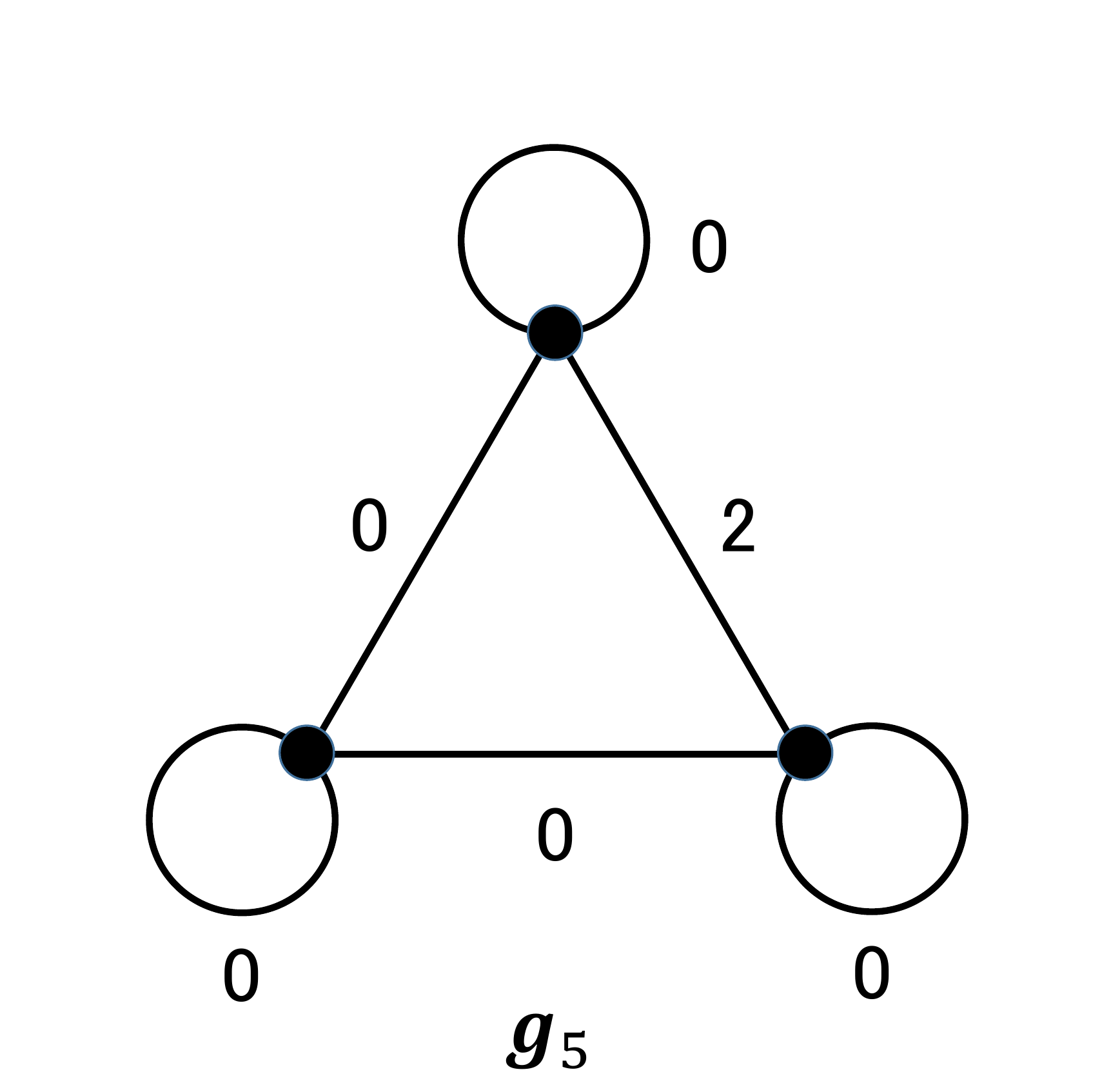}
\includegraphics[width=37mm]{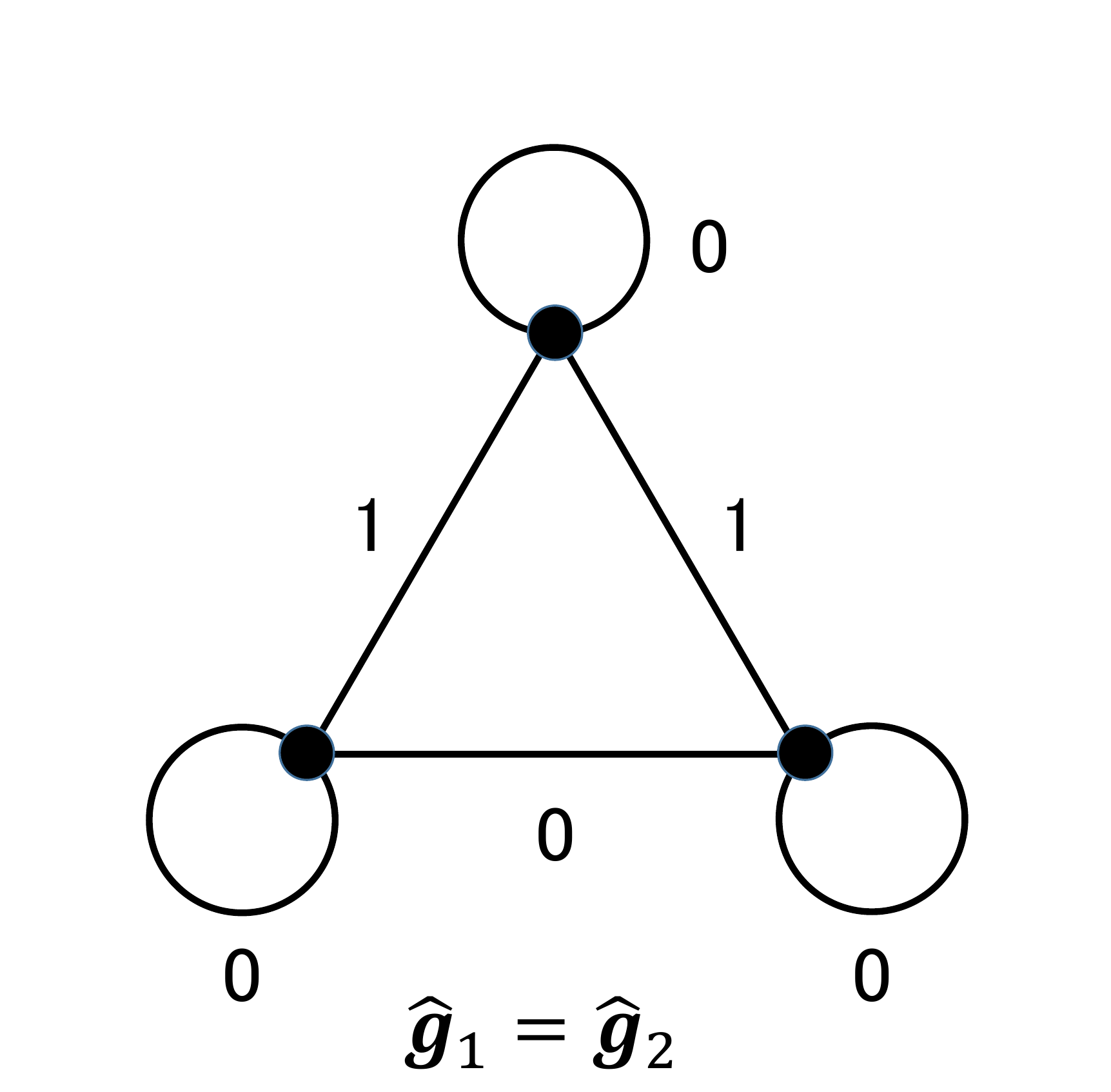} 
\includegraphics[width=37mm]{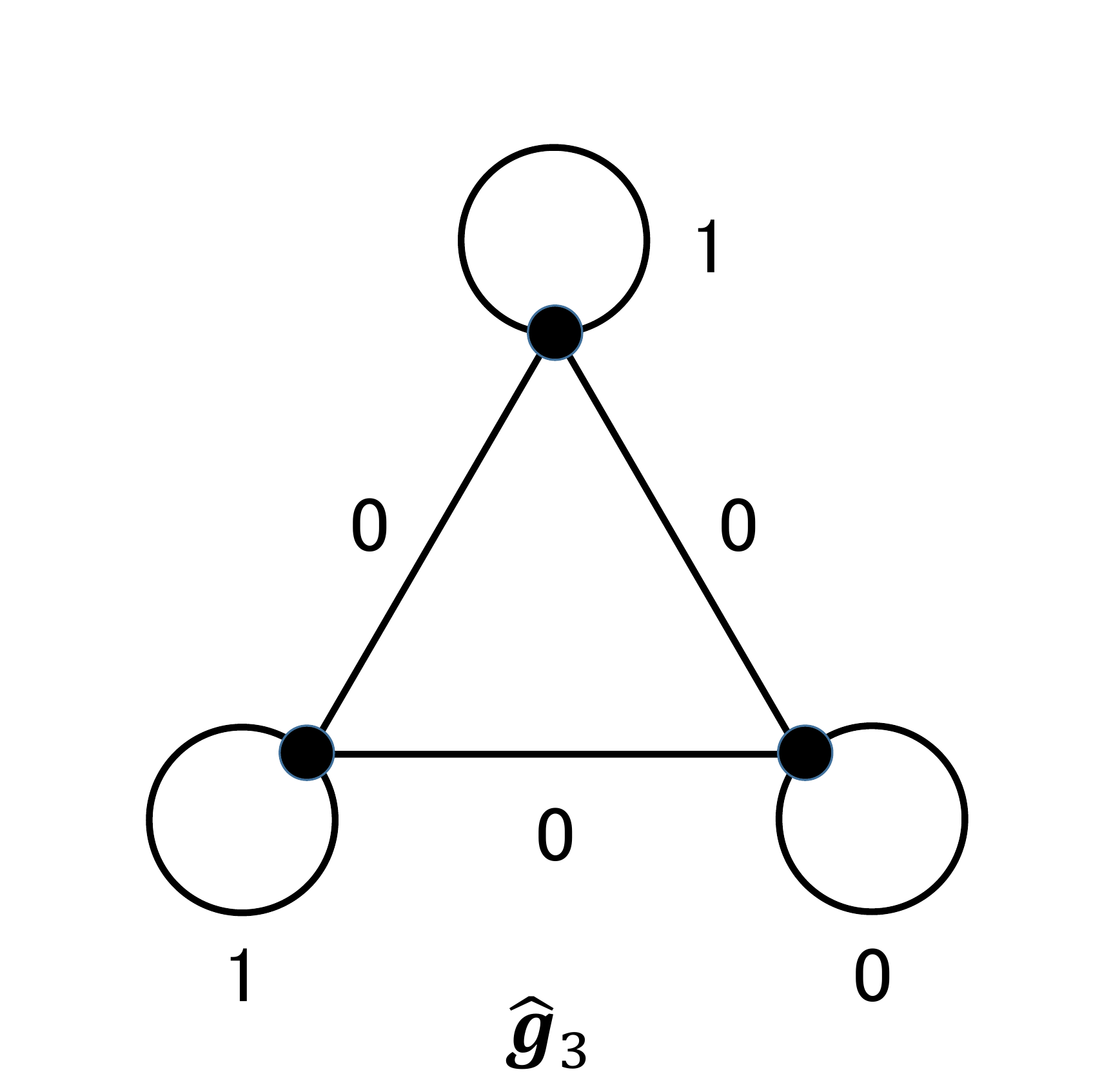} 
\includegraphics[width=37mm]{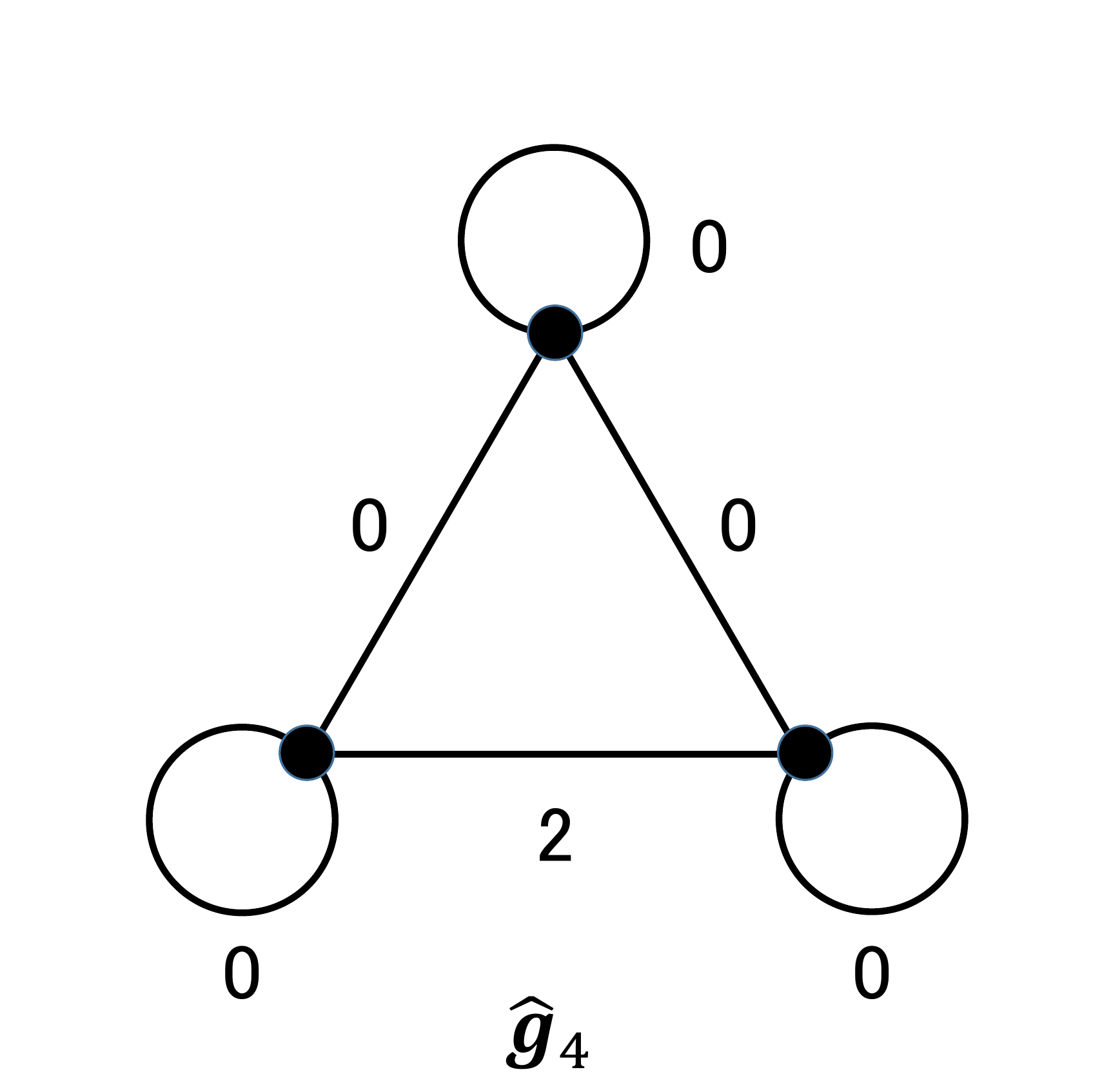} 
\includegraphics[width=37mm]{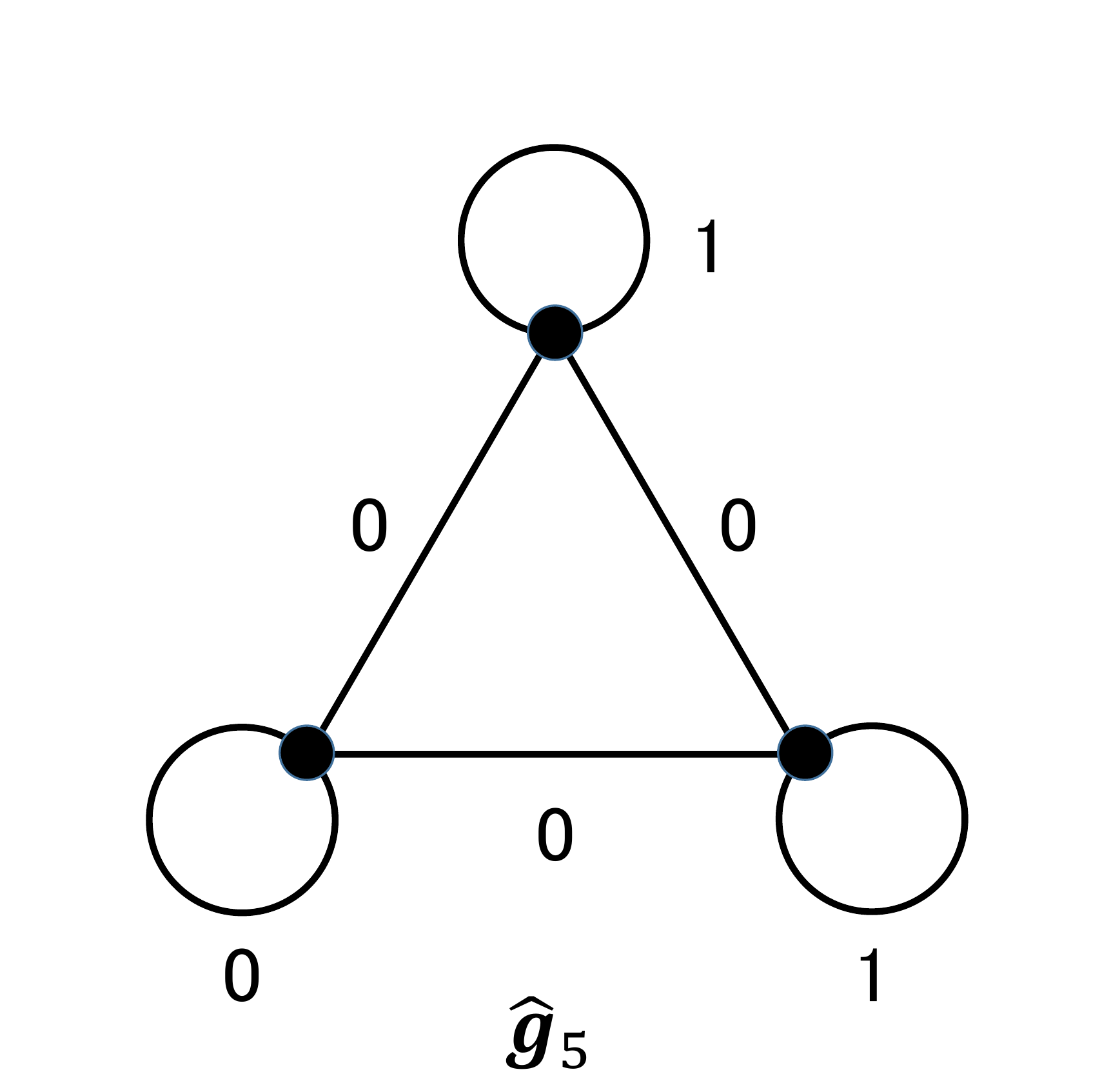} 
 \caption{Graphs $\bm g_1=\bm g_2$, $\bm g_3$, $\bm g_4$, $\bm g_5$ and 
$\hat{\bm g}_1=\hat{\bm g}_2$, $\hat{\bm g}_3$, $\hat{\bm g}_4$, $\hat{\bm g}_5$}
  \label{g_5}
 \end{center}
\end{figure}
  
We start with $\Bz_3$.
Since edges $aa, bb$ in $\Bz_3$ %
have to be canceled,  we need %
$\Bz_1$ and $\Bz_4$.
Since the edge $bc$ in $\Bz_4$ has to be canceled, we need $\Bz_2$.
Also, since the edge $ac$ in $\Bz_1$ has to be canceled we need $\Bz_5$.
Hence we need all slices and this proves that $\Bz$ is indispensable.

\bigskip
\noindent
III. \ Proof of ${\rm MD}(A_\Bb)  = 2$ for  $\Bb\neq (2,2,2)$. 

Recall that only Case 3-1 
needed a degree-three move.  We show that this move is not needed if $\Bb\neq (2,2,2)$, by a series of lemmas.

We write elements of the Graver basis by their positive part and their negative part, e.g., $\bm A=\bm A^+-\bm A^-$. 
We only need to consider the condition on $\Bb$ such that we need degree-three moves to decrease 
$S$  for the case
\begin{equation}
\label{eq:degree-3-special-case}
S=\sum_{i=1}^3 |\bm g_i - \hat\Bg_i|, \quad 
\hat{\bm g}_1=\bm g_1-\bm B(a), \ \hat{\bm g}_2=\bm g_2+\bm C(c), \ \hat{\bm g}_3=\bm g_3-\bm B(b)
\end{equation}
and $\bm b=A\bm g_i=A\hat{\bm g}_i$, $i=1,2,3$.
Note there is the symmetry of vertex $a$ and $b$.

\begin{lemma}
\label{lem:cc}
If degree-two moves do not decrease $S$ in \eqref{eq:degree-3-special-case}, then
\begin{align*}
\bm e_{cc} \le\bm g_2-\bm C(c)^-,\quad\bm e_{bc}\not\le \bm g_2-\bm C(c)^-,
\quad\bm e_{ca}\not\le\bm g_2-\bm C(c)^-.
\end{align*}
\end{lemma}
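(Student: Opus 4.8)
The plan is to prove the three assertions as contrapositives, coordinate by coordinate: assuming that no degree-two move lowers $S$, I derive each conclusion by exhibiting a decreasing degree-two move (or a fiber obstruction) whenever it fails. Everything rests on the identity $\bm C(c)=\bm B(a)+\bm B(b)$, read directly off \eqref{eq:graver3a}, together with $\bm C(c)^-=2\bm e_{ab}$. Since $\bm g_2-\bm C(c)^-$ agrees with $\bm g_2$ except in the $ab$-coordinate, the three conclusions read simply $g_2(cc)\ge 1$, $g_2(bc)=0$, $g_2(ac)=0$. I first record, from $\hat{\bm g}_2=\bm g_2+\bm C(c)\ge 0$ and $\bm C(c)^-=2\bm e_{ab}$, that $g_2(ab)\ge 2$.

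For the coordinate $ac$ (edge $ca$), suppose $g_2(ac)\ge 1$. The negative part of $\bm B(a)$ is $\bm e_{ab}+\bm e_{ac}$, so $g_2(ab)\ge 2$ and $g_2(ac)\ge 1$ give $\bm g_2+\bm B(a)\ge 0$; moreover $\bm g_1-\bm B(a)=\hat{\bm g}_1\ge 0$. Hence the exchange $(\bm g_1,\bm g_2)\to(\bm g_1-\bm B(a),\bm g_2+\bm B(a))$ is an admissible degree-two move preserving \eqref{omomiwa=b} and \eqref{omomiwa=c}. On layer $1$ it turns $\bm g_1-\hat{\bm g}_1=\bm B(a)$ into $\bm 0$, dropping the contribution by $|\bm B(a)|=4$, while on layer $2$ it sends $\bm g_2-\hat{\bm g}_2=-\bm C(c)$ to $\bm B(a)-\bm C(c)=-\bm B(b)$, leaving the contribution at $|\bm B(b)|=4$. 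Thus $S$ decreases by $4$, contradicting the hypothesis; therefore $g_2(ac)=0$. The coordinate $bc$ follows from the $a\leftrightarrow b$ symmetry noted after \eqref{eq:degree-3-special-case}, which swaps $\bm B(a)$ with $\bm B(b)$ and $\bm g_1$ with $\bm g_3$ while fixing $\bm C(c)$: if $g_2(bc)\ge 1$, then $\bm g_2+\bm B(b)\ge 0$ (its only obstruction being $g_2(bc)\ge 1$, since $g_2(ab)\ge 2$) and the move $(\bm g_3,\bm g_2)\to(\bm g_3-\bm B(b),\bm g_2+\bm B(b))$ again lowers $S$ by $4$, so $g_2(bc)=0$.

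It remains to show $g_2(cc)\ge 1$, and here---unlike the first two parts---I would use the fiber constraint rather than a move. Having established $g_2(ac)=g_2(bc)=0$, the $c$-row of $A$ in \eqref{eq:3-loop} gives $\deg(c)=g_2(ac)+g_2(bc)+2g_2(cc)=2g_2(cc)$ in $\bm g_2$. Since $\bm g_2\in\mathcal F_{A,\bm b}$ and we are in the standing regime $\min(\deg(a),\deg(b),\deg(c))\ge 2$, we must have $\deg(c)\ge 2$, forcing $g_2(cc)\ge 1$, i.e.\ $\bm e_{cc}\le\bm g_2-\bm C(c)^-$.

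The routine part is the non-negativity bookkeeping for the two exchange moves, which is precisely why I record $g_2(ab)\ge 2$ at the start. The conceptually important point---and the step I expect to be the crux for the subsequent lemmas---is the last one: the bound $g_2(cc)\ge 1$ is not forced by the Lawrence-lifting argument alone but only once the graphs are required to lie in a common fiber of $A$. This is exactly the feature that distinguishes ${\rm MD}(A_\Bb)$ from ${\rm MC}(A)$ and that will let the degree-three move of Case 3-1 be avoided when $\bm b\neq(2,2,2)$.
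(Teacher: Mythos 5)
Your proof is correct and follows essentially the same route as the paper's: the same two degree-two exchanges (passing $\bm B(b)$ from $\bm g_3$ to $\bm g_2$, and its $a\leftrightarrow b$ mirror passing $\bm B(a)$ from $\bm g_1$ to $\bm g_2$) rule out the edges $bc$ and $ca$, and a degree count at vertex $c$ then forces $\bm e_{cc}\le\bm g_2-\bm C(c)^-$. The only cosmetic difference is that you obtain the positive degree at $c$ from the standing assumption $\min(\deg(a),\deg(b),\deg(c))\ge 2$, while the paper gets it by comparing the degree of $c$ in $\bm C(c)^-$ with that in $\bm B(a)^+$; both rest on the same common-fiber constraint.
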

\begin{proof}
Since the degree of vertex $c$ of $\bm C^-$ is less than that of $\bm B(a)^+$ by one,  
the degree of vertex $c$ of $\bm g_2-\bm C(c)^-$ is greater than one.
Then $\bm e_{cc}\le \bm g_2-\bm C(c)^-$, $\bm e_{bc}\le \bm g_2-\bm C(c)^-$, or $\bm e_{ca}\le\bm g_2-\bm C(c)^-$. %
If $\bm e_{bc}\le \bm g_2-\bm C(c)^-$, then $S$ is decreased by the following exchange of edges:
\begin{align*}
\bm g_2'=\bm g_2+\bm B(b),\quad\bm g_3'=\bm g_3-\bm B(b).
\end{align*}
Hence $\bm e_{bc}\not\le \bm g_2-\bm C(c)^-$. 
We also have $\bm e_{ca}\not\le\bm g_2-\bm C(c)^-$
by the symmetry between $a$ and $b$.
\end{proof}

\begin{lemma}
\label{lem:ba}
If degree-two moves do not decrease $S$ in \eqref{eq:degree-3-special-case}, then
\begin{align*}
\bm e_{bc}\le \bm g_1-\bm B(a)^+,\quad\bm e_{bb}\not\le \bm g_1-\bm B(a)^+,\quad\bm e_{ab}\not\le\bm g_1-\bm B(a)^+ .
\end{align*}
\end{lemma}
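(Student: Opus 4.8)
The plan is to mirror the proof of Lemma \ref{lem:cc}, this time working at vertex $b$ with the graph $\bm g_1 - \bm B(a)^+$, where $\bm B(a)^+ = \bm e_{aa} + \bm e_{bc}$. The starting point is a degree count: since all of $\bm g_1,\bm g_2,\bm g_3$ share the degree sequence $\bm b$ with $\deg(b)\ge 2$, and $\bm B(a)^+$ carries exactly one unit of $b$-degree (through its $bc$ edge), the graph $\bm g_1-\bm B(a)^+$ has $b$-degree $\deg(b)-1\ge 1$. Hence at least one edge incident to $b$ — that is, one of $ab$, $bb$, $bc$ — occurs in $\bm g_1-\bm B(a)^+$. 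The task then reduces to ruling out $bb$ and $ab$ by degree-two moves, after which $bc$ is forced and carries the remaining $b$-degree, giving $\bm e_{bc}\le \bm g_1-\bm B(a)^+$.

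The engine of both exclusions is the identity $\bm C(c)=\bm B(a)+\bm B(b)$, read off from \eqref{eq:graver3a}, together with $|\bm B(a)|=|\bm B(b)|=|\bm C(c)|=4$. To rule out $bb$, I would assume $\bm e_{bb}\le \bm g_1-\bm B(a)^+$; then $\bm g_1\ge \bm e_{aa}+\bm e_{bb}=\bm C(c)^+$, so the degree-two move $\bm g_1\mapsto \bm g_1-\bm C(c)$, $\bm g_2\mapsto \bm g_2+\bm C(c)$ keeps both graphs non-negative — the second because $\bm g_2+\bm C(c)=\hat{\bm g}_2$ by \eqref{eq:degree-3-special-case}. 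After the move the $\bm g_2$-summand of $S$ collapses from $|\bm C(c)|$ to $0$, while the $\bm g_1$-summand changes only from $|\bm B(a)|$ to $|\bm B(a)-\bm C(c)|=|{-\bm B(b)}|=|\bm B(b)|$; since these norms are equal, $S$ strictly decreases, contradicting the hypothesis. Thus $\bm e_{bb}\not\le \bm g_1-\bm B(a)^+$.

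To rule out $ab$, I would assume $\bm e_{ab}\le \bm g_1-\bm B(a)^+$, which together with $\bm e_{bc}\le \bm B(a)^+$ gives $\bm g_1\ge (\bm g_1-\bm B(a)^+)+\bm B(a)^+\ge \bm e_{ab}+\bm e_{bc}$. Then the degree-two move $\bm g_1\mapsto \bm g_1+\bm B(b)$, $\bm g_3\mapsto \bm g_3-\bm B(b)$ is non-negative, the second graph being $\hat{\bm g}_3$ by \eqref{eq:degree-3-special-case}. Now the $\bm g_3$-summand of $S$ drops from $|\bm B(b)|$ to $0$ while the $\bm g_1$-summand moves from $|\bm B(a)|$ to $|\bm B(a)+\bm B(b)|=|\bm C(c)|$, again strictly decreasing $S$. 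The two contradictions, combined with the degree count, finish the proof.

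I expect the only delicate point to be the correct choice of the two moves rather than any computation: one must pair $\bm g_1$ with $\bm g_2$ (for the $bb$ case) and with $\bm g_3$ (for the $ab$ case), using precisely $\bm C(c)$ and $\bm B(b)$, so that the receiving graph lands exactly on $\hat{\bm g}_2$ or $\hat{\bm g}_3$ and one distance summand vanishes while the other keeps its norm. Once this pairing is seen, non-negativity follows at once from the assumed containments, and the strict decrease of $S$ is just the equality $|\bm B(a)|=|\bm B(b)|=|\bm C(c)|$; notably no appeal to Lemma \ref{lem:cc} nor to the $a\leftrightarrow b$ symmetry is needed here.
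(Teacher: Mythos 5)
Your proof is correct and takes essentially the same route as the paper's: the same degree count at vertex $b$ showing one of $ab$, $bb$, $bc$ survives in $\bm g_1-\bm B(a)^+$, and the same two degree-two exchanges ($\bm g_1-\bm C(c)$ with $\bm g_2+\bm C(c)$ to exclude $bb$, and $\bm g_1+\bm B(b)$ with $\bm g_3-\bm B(b)$ to exclude $ab$). Your explicit check of non-negativity and of the strict decrease of $S$ via the identity $\bm C(c)=\bm B(a)+\bm B(b)$ merely spells out what the paper leaves implicit.
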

\begin{proof}
Since the degree of vertex $b$ of $\bm B(a)^+$ is less than that of $\bm C(a)^-$ by one,  
the degree of vertex $b$ of $\bm g_1-\bm B(a)^+$ is greater than one.
Then $\bm e_{bc}\le \bm g_1-\bm B(a)^+$, $\bm e_{bb}\le \bm g_1-\bm B(a)^+$, or $\bm e_{ab}\le \bm g_1-\bm B(a)^+$.
If $\bm e_{bb}\le \bm g_1-\bm B(a)^+$, then $S$ is decreased by the following exchange of edges:
\begin{align*}
\bm g_1'=\bm g_1-\bm C(c),\quad\bm g_2'=\bm g_2+\bm C(c).
\end{align*}
Similarly if $\bm e_{ab}\le \bm g_1-\bm B(a)^+$, $S$ is decreased by the following  exchange of edges:
\begin{align*}
\bm g_1'=\bm g_1+\bm B(b),\quad\bm g_3'=\bm g_3-\bm B(b).
\end{align*}
\end{proof}

By the symmetry of $a$ and $b$, the following lemma also holds.
\begin{lemma}
\label{cor:bb}
If degree-two moves do not decrease $S$ in \eqref{eq:degree-3-special-case}, then
\begin{align*}
\bm e_{ca}\le \bm g_3-\bm B(b)^+,\quad\bm e_{aa}\not\le \bm g_3-\bm B(b)^+,\quad\bm e_{ab}\not\le\bm g_3-\bm B(b)^+ .
\end{align*}
\end{lemma}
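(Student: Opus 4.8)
The plan is to deduce this lemma from Lemma \ref{lem:ba} by the symmetry between the vertices $a$ and $b$ that was already noted after \eqref{eq:degree-3-special-case}, rather than by repeating the degree-counting argument. First I would make that symmetry explicit as an involution $\sigma$ acting on the edge weights of a graph $\Bg\in\N^6$, namely the coordinate permutation that interchanges $aa\leftrightarrow bb$ and $ac\leftrightarrow bc$ while fixing $ab$ and $cc$. This $\sigma$ is exactly the relabeling induced by transposing the two vertices $a$ and $b$, so it restricts to an automorphism of the lattice $\kerz A$ of moves and preserves $1$-norms.

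The key verification is that $\sigma$ carries the moves appearing in \eqref{eq:degree-3-special-case} to moves of the same configuration. Reading off the columns of \eqref{eq:graver3a}, one checks directly that $\sigma(\bm B(a))=\bm B(b)$ (hence also $\sigma(\bm B(b))=\bm B(a)$) and that $\sigma(\bm C(c))=\bm C(c)$. Consequently the involution $\tau$ that applies $\sigma$ to every graph and simultaneously swaps the layers $\Bg_1\leftrightarrow\Bg_3$ while fixing $\Bg_2$ sends a triple satisfying $\hat\Bg_1=\Bg_1-\bm B(a)$, $\hat\Bg_2=\Bg_2+\bm C(c)$, $\hat\Bg_3=\Bg_3-\bm B(b)$ to another triple of exactly the form \eqref{eq:degree-3-special-case}. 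Since $\tau$ preserves $S$ and is a bijection on moves, it carries any degree-two move decreasing $S$ to a degree-two move decreasing $S$ of the image; hence the hypothesis that degree-two moves do not decrease $S$ is invariant under $\tau$.

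It then remains only to push the conclusion of Lemma \ref{lem:ba} through this symmetry. Being a coordinate permutation, $\sigma$ preserves the elementwise order $\le$ on $\N^6$ and acts on the relevant unit graphs by $\sigma(\bm e_{bc})=\bm e_{ca}$, $\sigma(\bm e_{bb})=\bm e_{aa}$, $\sigma(\bm e_{ab})=\bm e_{ab}$, together with $\sigma(\bm B(a)^+)=\bm B(b)^+$. Applying Lemma \ref{lem:ba} to the image configuration and then transporting its three relations $\bm e_{bc}\le\Bg_1-\bm B(a)^+$, $\bm e_{bb}\not\le\Bg_1-\bm B(a)^+$, $\bm e_{ab}\not\le\Bg_1-\bm B(a)^+$ back through $\sigma$ yields exactly $\bm e_{ca}\le\Bg_3-\bm B(b)^+$, $\bm e_{aa}\not\le\Bg_3-\bm B(b)^+$, $\bm e_{ab}\not\le\Bg_3-\bm B(b)^+$, which is the claim. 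The only real work is the bookkeeping of the second paragraph---checking against \eqref{eq:graver3a} that $\sigma$ fixes the setup---and I expect no genuine obstacle here, since the statement is simply the $a\leftrightarrow b$ image of Lemma \ref{lem:ba}.
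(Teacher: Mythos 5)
Your proposal is correct and is essentially the paper's own argument: the paper dispatches this lemma with the single remark that it follows from Lemma \ref{lem:ba} ``by the symmetry of $a$ and $b$.'' You have merely made that symmetry explicit as the coordinate involution $\sigma$ (swapping $aa\leftrightarrow bb$, $ac\leftrightarrow bc$) combined with the layer swap $\Bg_1\leftrightarrow\Bg_3$, and your verifications $\sigma(\bm B(a))=\bm B(b)$, $\sigma(\bm C(c))=\bm C(c)$, $\sigma(\bm B(a)^+)=\bm B(b)^+$ all check out against \eqref{eq:graver3a}.
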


\begin{lemma}
\label{lem:cgeq3}
Suppose that degree-two moves do not decrease $S$ in \eqref{eq:degree-3-special-case} and
$\deg(a)\ge 3$ or $\deg(b)\ge 3$. Then $\deg(c)\ge 3$.
\end{lemma}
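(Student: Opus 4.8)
The plan is to convert the edge-weight information already extracted in Lemmas~\ref{lem:ba} and~\ref{cor:bb} into statements about the vertex degrees, and then argue by contradiction. The key observation is that $\bm g_1,\bm g_2,\bm g_3$ all lie in the same fiber $\mathcal F_{A,\bm b}$, so the three degrees $\deg(a),\deg(b),\deg(c)$ are common to all of them; I may therefore read each degree off whichever graph is most convenient. Recalling that a self-loop contributes twice, I will use $\deg(a)=2g(aa)+g(ab)+g(ac)$, $\deg(b)=g(ab)+2g(bb)+g(bc)$, and $\deg(c)=g(ac)+g(bc)+2g(cc)$.

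First I would record what Lemma~\ref{lem:ba} says about $\bm g_1$. Since $\bm B(a)^+=\bm e_{aa}+\bm e_{bc}$, the three conclusions $\bm e_{bc}\le\bm g_1-\bm B(a)^+$, $\bm e_{bb}\not\le\bm g_1-\bm B(a)^+$, $\bm e_{ab}\not\le\bm g_1-\bm B(a)^+$ translate, using that weights are non-negative integers, into $g_1(bc)\ge 2$, $g_1(bb)=0$, $g_1(ab)=0$. Hence $\deg(b)=g_1(bc)$ and $\deg(c)=g_1(ac)+g_1(bc)+2g_1(cc)\ge g_1(bc)\ge 2$. Symmetrically, Lemma~\ref{cor:bb} applied to $\bm g_3$, together with $\bm e_{ca}=\bm e_{ac}$ and $\bm B(b)^+=\bm e_{ac}+\bm e_{bb}$, gives $g_3(ac)\ge 2$, $g_3(aa)=0$, $g_3(ab)=0$, so that $\deg(a)=g_3(ac)$ and again $\deg(c)\ge g_3(ac)\ge 2$.

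With these in hand the conclusion follows by contradiction. Suppose $\deg(c)<3$; since $\deg(c)\ge 2$ is an integer, $\deg(c)=2$. Reading $\deg(c)=g_1(ac)+g_1(bc)+2g_1(cc)=2$ with $g_1(bc)\ge 2$ forces $g_1(bc)=2$ (and $g_1(ac)=g_1(cc)=0$), whence $\deg(b)=g_1(bc)=2$. Likewise $\deg(c)=g_3(ac)+g_3(bc)+2g_3(cc)=2$ with $g_3(ac)\ge 2$ forces $g_3(ac)=2$, whence $\deg(a)=g_3(ac)=2$. Thus $\deg(a)=\deg(b)=2$, contradicting the hypothesis that $\deg(a)\ge 3$ or $\deg(b)\ge 3$; therefore $\deg(c)\ge 3$.

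The argument is essentially bookkeeping, so I do not anticipate a genuine obstacle. The one point that requires care — and the only place any real idea enters — is recognizing that the earlier lemmas pin down the $b$-incident edges of $\bm g_1$ and the $a$-incident edges of $\bm g_3$ tightly enough that a small value of the \emph{shared} quantity $\deg(c)$ simultaneously collapses both $\deg(b)$ (through $\bm g_1$) and $\deg(a)$ (through $\bm g_3$). Once the degree formulas are written with the self-loop weights doubled, everything else is an inequality chase. I would also note the $a\leftrightarrow b$ symmetry mentioned after~\eqref{eq:degree-3-special-case}, under which $\bm g_1\leftrightarrow\bm g_3$, so that the two halves of the computation are mirror images and one may be obtained from the other ``by symmetry''.
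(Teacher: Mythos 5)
Your proof is correct and follows essentially the same route as the paper: both rest on the observation that Lemmas~\ref{lem:ba} and~\ref{cor:bb} force $g_1(bb)=g_1(ab)=0$ and $g_3(aa)=g_3(ab)=0$, so that $\deg(b)=g_1(bc)$ and $\deg(a)=g_3(ac)$ are carried entirely by edges incident to $c$, whence $\deg(c)\ge\max(\deg(a),\deg(b))$. The paper merely phrases this directly (assuming $\deg(a)\ge 3$ by symmetry and concluding $2\Be_{ca}\le \Bg_3-\bm B(b)^+$, hence $\deg(c)\ge 3$), whereas you run the equivalent contrapositive with both symmetric halves written out.
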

\begin{proof}
By symmetry let $\deg(a)\ge 3$.
By Lemma \ref{cor:bb}, in this case, $\bm 2e_{ca}\le \bm g_3-\bm B(b)^+$. Hence $\deg(c)\ge 3$.
\end{proof}

By this lemma we can assume that $\deg(c)\ge 3$ if $\Bb\neq (2,2,2)$.
Hence our proof is completed by the following lemma.
\begin{lemma}
\label{lem:d3not}
If $\deg(c)\ge 3$, then  $S$ in \eqref{eq:degree-3-special-case} can be decreased by degree-two moves.
\end{lemma}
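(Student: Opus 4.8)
The plan is to argue directly that a degree-two move lowers $S$, assuming that none of the degree-two exchanges built in the proofs of Lemmas~\ref{lem:cc}--\ref{cor:bb} already does so (otherwise we are done); under this assumption those lemmas apply and fix the shape of $\bm g_1,\bm g_2,\bm g_3$. Since $\bm C(c)^-=2\bm e_{ab}$ meets only the edge $ab$, Lemma~\ref{lem:cc} gives $g_2(ac)=g_2(bc)=0$ and $g_2(cc)\ge 1$, i.e.\ in $\bm g_2$ the vertex $c$ carries only self-loops and $\deg(c)=2g_2(cc)$. Lemmas~\ref{lem:ba} and \ref{cor:bb} give $g_1(bc)\ge 2,\ g_1(ab)=g_1(bb)=0$ and $g_3(ac)\ge 2,\ g_3(aa)=g_3(ab)=0$, while $g_2(ab)\ge 2$; in particular $\deg(a)=g_3(ac)\ge 2$ and $\deg(b)=g_1(bc)\ge 2$.

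The first step is a parity split. As $\deg(c)=2g_2(cc)$ is forced to be even, an odd $\deg(c)\ge 3$ is incompatible with these constraints: it forces $g_2(ac)\ge 1$ or $g_2(bc)\ge 1$, and then the exchange displayed in the proof of Lemma~\ref{lem:cc} already lowers $S$. So I may assume $\deg(c)$ even and $\ge 4$, which yields the decisive surplus $g_2(cc)\ge 2$.

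The core difficulty is that the three natural degree-two exchanges undoing the degree-three move of \eqref{eq:degree-3-special-case}---moving $\bm B(a)$ from layer~$1$ into layer~$2$, moving $\bm B(b)$ from layer~$3$ into layer~$2$, or moving $\bm C(c)=\bm B(a)+\bm B(b)$ out of layer~$2$---are \emph{all} blocked, exactly because $c$ is only self-looped in $\bm g_2$ (no edge $ac$ or $bc$ to receive $\bm B(a)$ or $\bm B(b)$) and the loops $aa,bb$ required by $\bm C(c)$ are missing from $\bm g_1,\bm g_3$. I would spend one of the two surplus loops $cc$ of $\bm g_2$ to open an incidence at $c$. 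Because $g_2(ab)\ge 2$ and $g_2(cc)\ge 2$, subtracting $\bm B(c)$ from $\bm g_2$ or adding $\bm D(b)$ to $\bm g_2$ stays non-negative and creates an edge $bc$ in layer~$2$; I pair the opener with $\bm g_1$, which always has $g_1(bc)\ge 2$ and has either an edge $ac$ (then add $\bm B(c)$ to $\bm g_1$) or else $g_1(ac)=\deg(a)-2g_1(aa)=0$, forcing $g_1(aa)=\deg(a)/2\ge 1$ (then subtract $\bm D(b)$ from $\bm g_1$). With the missing incidence now in layer~$2$, the blocked transfer is completed by a few more degree-two exchanges; using $\bm C(c)=\bm B(a)+\bm B(b)$ and $\bm A=\bm B(a)+\bm B(b)+\bm B(c)$ one reaches $(\hat{\bm g}_1,\hat{\bm g}_2,\hat{\bm g}_3)$ by degree-two moves alone, so the contribution of these layers to $S$ falls to $0$ and $S$ is decreased.

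The step I expect to be the main obstacle is maintaining non-negativity across the whole sequence: every opener consumes a loop $cc$ and must deposit its complementary edges onto a layer already carrying the matching edges or loops, and this is precisely where $\deg(c)\ge 4$ is indispensable, since it is what guarantees $g_2(cc)\ge 2$ and lets one loop $cc$ be spent while a second remains. I anticipate a short case analysis in the completion step according to which docking loops $aa$ (in $\bm g_1$) and $bb$ (in $\bm g_3$) are present; the representative hard configuration is the fully symmetric one $g_2(aa)=g_2(bb)=g_1(aa)=g_3(bb)=0$, where every loop is a $cc$ and in fact $\bm g_1=\bm g_3$, and there the opener $\bm B(c)$---with $\bm B(c)^+$ supported on $\{ab,cc\}$ and $\bm B(c)^-$ on $\{ac,bc\}$, all present---does the job.
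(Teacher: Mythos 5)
Your first half is sound and coincides with the paper's opening step: granting that none of the exchanges from Lemmas~\ref{lem:cc}--\ref{cor:bb} applies, vertex $c$ meets only self-loops in $\bm g_2$, so $\deg(c)=2g_2(cc)$ is even, and $\deg(c)\ge 3$ forces $\deg(c)\ge4$ and $g_2(cc)\ge 2$ (this is exactly what the paper asserts first; its ``$2\bm e_{cc}\le\bm g_3-\bm B(b)^+$'' is a typo for $2\bm e_{cc}\le\bm g_2-\bm C(c)^-$). The gap is everything after that. The entire content of the lemma is an explicit, non-negativity-preserving sequence of degree-two exchanges carrying $\{\bm g_1,\bm g_2,\bm g_3\}$ to $\{\hat{\bm g}_1,\hat{\bm g}_2,\hat{\bm g}_3\}$; your sentence ``the blocked transfer is completed by a few more degree-two exchanges'' is precisely the claim to be proved, and your opener can fail to make any progress at all. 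Take $\Bb=(4,2,4)$ and
\[
\bm g_1=2\bm e_{aa}+2\bm e_{bc}+\bm e_{cc},\qquad \bm g_2=\bm e_{aa}+2\bm e_{ab}+2\bm e_{cc},\qquad \bm g_3=4\bm e_{ac}+\bm e_{bb} .
\]
All of your standing constraints hold, and $g_1(ac)=0$, so your plan prescribes the opener $(\bm g_1,\bm g_2)\to(\bm g_1-\bm D(b),\bm g_2+\bm D(b))$. But here $\bm g_1-\bm D(b)=\bm g_2$ and $\bm g_2+\bm D(b)=\bm g_1$: the opener merely swaps the two graphs, so on the fiber of $A_\Bb$ (whose elements are multisets) it is the zero move, and the ``completion'' is the original problem. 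One can further check that from this position the remaining net transfers ($-\bm B(c)$ on layer $1$, $+\bm C(a)$ on layer $2$, $-\bm B(b)$ on layer $3$) cannot be realized by two exchanges in any pairing or order, since layer $2$ now has no $ab$ or $ac$ edge, layer $1$ no $bb$ or $bc$, and layer $3$ no $cc$ or $bc$; a completion exists but needs three further exchanges built from $\bm D(c)$, $\bm C(b)$ and $\bm B(a)$, moves your plan nowhere suggests.

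Two further points. First, the configuration you name as the ``representative hard case'' cannot occur: \eqref{eq:degree-3-special-case} itself forces $g_1(aa)\ge 1$ and $g_3(bb)\ge 1$ (because $\hat{\bm g}_1=\bm g_1-\bm B(a)\ge 0$ and $\hat{\bm g}_3=\bm g_3-\bm B(b)\ge 0$), so $g_1(aa)=g_3(bb)=0$, let alone $\bm g_1=\bm g_3$, is infeasible, and your sanity check of the hard case is vacuous while the genuinely hard instances (like the one above) go unexamined. Second, for comparison, the paper needs no openers and no cases: it applies the single uniform sequence $(\bm g_2,\bm g_3)\to(\bm g_2-\bm D(a),\bm g_3+\bm D(a))$, then $(\bm g_1,\bm g_2)\to(\bm g_1+\bm C(a),\bm g_2-\bm C(a))$, then $(\bm g_2,\bm g_3)\to(\bm g_2+\bm B(c),\bm g_3-\bm B(c))$, then $(\bm g_1,\bm g_2)\to(\bm g_1-\bm A,\bm g_2+\bm A)$, whose non-negativity at every intermediate step follows directly from $g_1(aa)\ge1$, $g_1(bc)\ge2$, $g_2(ab)\ge2$, $g_2(cc)\ge2$, $g_3(ac)\ge2$, $g_3(bb)\ge1$, and which lands exactly on $(\hat{\bm g}_1,\hat{\bm g}_2,\hat{\bm g}_3)$ because $\bm C(a)-\bm A=-\bm B(a)$ and $\bm D(a)-\bm B(c)=-\bm B(b)$. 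Your approach would need an explicit, verified sequence of this kind in each of your cases before it counts as a proof.
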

\begin{proof}
By Lemma \ref{lem:cc}, if $\deg(c)\ge 3$, then $\bm 2e_{cc}\le\bm g_3-\bm B(b)^+$.
Then the following series of exchanges of edges decreases $S$:
\begin{align*}
\bm g_2'&=\bm g_2-\bm D(a),\quad\bm g_3'=\bm g_3+\bm D(a), \\
\bm g_1'&=\bm g_1+\bm C(a),\quad\bm g_2'=\bm g_2-\bm C(a),\\
\bm g_2'&=\bm g_2+\bm B(c),\quad\bm g_3'=\bm g_3-\bm B(c),\\
\bm g_1'&=\bm g_1-\bm A,\quad\bm g_2'=\bm g_2+\bm A.
\end{align*}
\end{proof}

\section{Complete bipartite graphs as base configurations}
\label{sec:complete-bipartite-graphs}

In this section we  take incidence matrices $A(I,J)$ of complete bipartite graphs $K_{I,J}$ as base configurations and study the maximum Markov degree of the configurations defined by their fibers.
The fibers correspond to two-way transportation polytopes.  
In algebraic statistics, $A(I,J)$ is the design matrix specifying the row sums and the column sums of an $I\times J$ two-way contingency table and the $N$-th Lawrence lifting $A(I,J)^{(N)}$ is the design matrix for no-three-factor interaction model for $I\times J\times N$ three-way contingency tables.

A remarkable fact for the case of complete bipartite graphs 
is that the maximum Markov degree is three irrespective of $I$ and $J$ as we show in
Section \ref{subsec:MD-for-two-way}.  On the other hand
the Markov complexity grows with $I$ and $J$.  Lower bound for the Graver complexity has been
obtained by \cite{berstein09}, \cite{kudo-takemura}.  In Section 
\ref{subsec:lower-MC} we give a lower bound for the Markov complexity, which appears on the 
right-hand side of \eqref{eq:main-result} in our main theorem.

\subsection{Markov degree for two-way transportation polytopes}
\label{subsec:MD-for-two-way}

In this section we prove that the Markov degree of configurations for two-way transportation polytopes is at most three. As discussed in Section \ref{sec:intro}, recently this fact was proved by 
Domokos and  Jo\'o (\cite{domokos-joo}) in a more general setting.  However in this section
we give a proof, which is a direct extension of a proof in \cite{YamaguchiOgawaTakemura}.

Let $\Br \in \N^{I}$ and $\Bc\in\N^{J}$ be two non-negative integer vectors with $\sum_{i=1}^{I}r_{i}=\sum_{j=1}^{J}c_{j}$.
The two-way transportation polytope is the set of all non-negative matrices $\Bx=(x_{ij})$ whose row sum vector is $\Br$ and column sum vector is $\Bc$.
Let  $T_{\Br,\Bc}$  be the set of integral matrices in the transportation polytope.
Then 
\[
T_{\Br,\Bc}= {\mathcal F}_{A(I,J),(\Br, \Bc)}
\]
is the the $(\Br,\Bc)$-fiber for the incidence matrix $A(I,J)$ of the complete bipartite graph $K_{I,J}$.
We regard an element in $T_{\Br,\Bc}$ as complete bipartite graph with non-negative integral weights on edges, which is denoted by $\Bg = (g(ij)\mid (i,j)\in[I]\times[J])$.
Set $\Be=(e_{ij}) \in \N A(I,J)_{(\Br, \Bc)}$ arbitrarily.
Then an element of the corresponding fiber $\cF_{A(I,J)_{(\Br, \Bc)},\Be}$ can be identified with some multiset $\{ \Bg_{1},\ldots,\Bg_{N}\}$ satisfying $\Bg_{k} \in T_{\Br,\Bc}, k=1,\ldots, N$, and $\sum_{k}g_{k}(ij)=e_{ij}, (i,j)\in[I]\times[J]$.
Haase and Paffenholz \cite{haase2009poly} studied the $3\times3$ transportation polytopes.
When $I=J$ and $\Br=\Bc=(1,\ldots,1)^{\top}$, the corresponding transportation polytope is the Birkhoff polytope.

\begin{theorem}
\label{thm:tpoly} 
The toric ideal associated with the transportation polytope is generated by binomials of degree two and three, i.e., $\max_{(\Br,\Bc)\in \N A(I,J)} {\rm MD}(A(I,J)_{(\Br,\Bc)})=3$.
\end{theorem}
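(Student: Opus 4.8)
The plan is to establish the two inequalities $\max_{(\Br,\Bc)} {\rm MD}(A(I,J)_{(\Br,\Bc)}) \ge 3$ and $\le 3$ separately. The lower bound is immediate: taking $I=J$ and $\Br=\Bc=(1,\dots,1)\trans$ gives the Birkhoff polytope, whose Markov degree is known to be exactly three by \cite{YamaguchiOgawaTakemura}, so degree-two moves alone cannot form a Markov basis. The substance of the theorem is therefore the upper bound, and I would prove it by the same distance-reduction strategy used in Theorems \ref{thm:k4-noloop} and \ref{thmmd3}. Concretely, take two multisets $\{\Bg_1,\dots,\Bg_N\}$ and $\{\hat\Bg_1,\dots,\hat\Bg_N\}$ in the same fiber $\cF_{A(I,J)_{(\Br,\Bc)},\Be}$, set $S=\sum_k |\Bg_k - \hat\Bg_k|$, and show that whenever $S>0$ there is an exchange of edges among at most three of the graphs that keeps all graphs in $T_{\Br,\Bc}$, preserves the column-sum condition $\sum_k g_k(ij)=e_{ij}$, and strictly decreases $S$.

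The key structural input is that each difference $\Bz_k = \Bg_k - \hat\Bg_k$ lies in $\kerz A(I,J)$, so its support decomposes into cycles of the complete bipartite graph $K_{I,J}$ with alternating signs; the indispensable moves for $A(I,J)$ are the $4$-cycles (degree-two binomials of the $I\times J$ independence model). First I would locate an edge $(i,j)$ with $z_k(ij)>0$ for some $k$; since $\sum_k z_k(ij)=0$, there is another layer $k'$ with $z_{k'}(ij)<0$. Following the alternating cycle structure on layer $k$, I would trace out the neighboring edges that are forced to differ, and then attempt a two-graph swap moving a $4$-cycle's worth of weight between layers $k$ and $k'$. When such a direct exchange is blocked by non-negativity, I would adapt the arguments of \cite{YamaguchiOgawaTakemura} for the Birkhoff case, which handle exactly this obstruction by rerouting through a third graph, producing a degree-three move. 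The crucial point inherited from that paper is that three graphs always suffice: the exchange can be localized to a sub-bipartite-graph on a bounded configuration of rows and columns, so the combinatorial case analysis does not grow with $I$ and $J$.

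The main obstacle I anticipate is precisely the non-negativity bookkeeping that forces degree three rather than degree two, mirroring the special role of $\Bb=(2,2,2)$ in Theorem \ref{thmmd3} and of the $3\times 3$ minor in the Birkhoff analysis. The difficulty is that a naive $4$-cycle swap between two graphs may drive some weight negative, and one must argue that a suitable third graph carrying the complementary pattern is always available — this is where the fiber condition $\Bg_k\in T_{\Br,\Bc}$ (all graphs sharing the fixed margins $\Br,\Bc$) is essential, just as the common-fiber condition was essential in distinguishing the ${\rm MD}=3$ bound from the larger ${\rm MC}=5$ bound in the previous section. I would organize the proof by reducing to the case where all pairwise two-graph swaps fail, show this forces a rigid local pattern analogous to the $\Ba_1 + 2\Ba_4 = \Ba_2+\Ba_3+\Ba_5$ relation, and then exhibit the degree-three move that resolves it. Verifying that this pattern analysis is exhaustive, and that it collapses uniformly for all $I,J$ to a constant-type move, is the step I expect to require the most care and to consume the bulk of the proof.
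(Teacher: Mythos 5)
Your lower bound is exactly the paper's (the Birkhoff polytope is the case $I=J$, $\Br=\Bc=(1,\dots,1)\trans$, whose Markov degree is three by \cite{YamaguchiOgawaTakemura}), and your instinct to adapt that paper is also the paper's starting point. But the concrete plan you give for the upper bound is not the mechanism that makes the adaptation work, and it has a genuine gap at its center. You propose a global distance reduction with case analysis: assume all two-graph swaps fail, show this forces a ``rigid local pattern'' of bounded size, and resolve it by a single degree-three move, asserting that the analysis ``collapses uniformly for all $I,J$ to a constant-type move.'' That assertion is precisely what cannot be taken for granted. The pattern analysis of Theorem \ref{thmmd3} is feasible only because the base configuration there has a Graver basis of $20$ elements; for $A(I,J)$ the conformal building blocks of a difference $\Bg_{k}-\hat{\Bg}_{k}$ are alternating cycles of $K_{I,J}$ of length up to $2\min(I,J)$, and the ways such cycles on different layers can interact with the non-negativity constraints grow with $I$ and $J$. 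Nothing in your sketch shows that the obstruction localizes to a bounded sub-bipartite graph, and you yourself defer this step --- but it is the whole content of the theorem.

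The paper avoids this case analysis entirely by a different mechanism, inherited from \cite{YamaguchiOgawaTakemura} (which also does not argue via rigid local patterns). It relaxes the state space: besides proper multisets it allows \emph{improper} multisets, in which one graph has a single entry equal to $-1$ (Definition \ref{def:improper}), and graphs with \emph{collisions}, in which one row sum exceeds $r_{i}$ by one (Definition \ref{def:collision}). Lemmas \ref{lem:collision}--\ref{lem:key2} then show that one can fix a target graph $\hat{\Bg}_{k^{\prime}}$ and make one graph of $\cP$ identical to it using swap operations among only \emph{two} graphs, provided improper multisets are allowed as intermediate states; long cycles are handled uniformly there by a path decomposition of a symbol-matching graph, not by a finite list of cases. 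The degree-three moves are produced only a posteriori, by bookkeeping: every intermediate improper multiset carries a resolvable pair compatible with the adjacent swaps, so two consecutive two-graph swap operations sharing a common graph can be merged into a single swap operation among three graphs connecting proper multisets. This relaxation-then-merge idea is the ingredient missing from your proposal; without it, or a genuine substitute for the unbounded case analysis you postpone, your plan does not yield a proof.
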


The rest of this subsection is devoted to the proof of Theorem \ref{thm:tpoly}.
Our proof is a direct extension of the proof for the Birkhoff polytope in \cite{YamaguchiOgawaTakemura}.
We modify the terminologies in \cite{YamaguchiOgawaTakemura} to be suitable for our setting.

\begin{definition}
\label{def:proper}
An $I\times J$ integer matrix $\Bg=(g(ij))$ is a {\em proper graph} if $\Bg$ is an element of $T_{\Br,\Bc}$.
A multiset $\{ \Bg_{1},\ldots,\Bg_{N}\}$ is {\em proper} if each $\Bg_{k}, k=1,\ldots,N$, is a proper graph.
\end{definition}
For two proper graphs $\Bg$ and $\hat{\Bg}$, we call $D_{\Bg,\hat{\Bg}}:=\sum_{i,j}|g(ij)-\hat{g}(ij)|$ the {\em size of differences}.
\begin{definition}
\label{def:improper}
An $I\times J$ integer matrix $\Bg=(g(ij))$ is an {\em improper graph} if $\Bg$ has the row sum $\Br$ and column sum $\Bc$, and there exists a unique edge $(i^* , j^*) \in [I] \times [J]$ such that
\begin{align*}
g(i^* j^*)=-1, \quad g(ij) \geq 0 , \ \forall (i, j) \neq (i^* , j^*).
\end{align*}
We call $g(i^* j^*)$ an {\em improper edge} of $\Bg$.
A multiset $\{ \Bg_{1},\ldots,\Bg_{N}\}$ is {\em improper} if one of $\{ \Bg_{1},\ldots,\Bg_{N}\}$ is an improper graph, the others are proper graphs, and $\sum_{k=1}^N g_{k}(ij) \geq 0, \forall i,j$. 
\end{definition}

\begin{definition}
\label{def:collision}
An $I\times J$ integer matrix $\Bg=(g(ij))$ is a {\em graph with collision} if $g(ij)\geq 0, \forall i,j$, the column sum of $\Bg$ is $\Bc$ and there exists $i^* \in [I]$ such that 
\begin{align*}
\sum_{j=1}^J g(i^{*}j)=r_{i^{*}}+1, \quad \sum_{j=1}^J g(ij)\leq r_{i}+1, \ \forall i\neq i^*.
\end{align*}
In this case we also say that the graph $\Bg$ contains a {\em collision} or the vertex $i^*$ {\em collides} in $\Bg$.
\end{definition}

We often denote a multiset $\{ \Bg_{1},\ldots,\Bg_{N} \}$ of $I\times J$ integer matrices by $\cS$ if $\sum_{k=1}^{N}g_{k}(ij) \geq 0, \forall i,j$, and each element $\Bg_{k},\forall k$, is one of the graphs defined in Definitions \ref{def:proper}--\ref{def:collision}.
The multiset $\cS$ is denoted by $\cP$ (resp. $\cI$) when $\cS$ is proper (resp. improper) and we want to emphasize it.

We now introduce some operations. 
Let $\cS=\{ \Bg_{1},\ldots,\Bg_{N}\}$ be a multiset of graphs in Definitions \ref{def:proper}--\ref{def:collision}.
Consider a pair of distinct graphs in $\cS$, say $\Bg_{k_1}=(g_{k_{1}}(ij))$ and $\Bg_{k_2}=(g_{k_{2}}(ij))$. 
Fix $i_{1},i_{2}\in[I]$ and $j^{*}\in[J]$ arbitrarily and set the two matrices $\Bz_{k_{1}}=(z_{k_{1}}(ij))$ and $\Bz_{k_{2}}=(z_{k_{2}}(ij))$ as
\begin{align*}
z_{k_{1}}(ij) = 
\begin{cases}
+1,& (i,j)=(i_{2},j^{*}), \\
-1,& (i,j)=(i_{1},j^{*}), \\
0,& \text{otherwise},
\end{cases}
\qquad
z_{k_{2}}(ij) = 
\begin{cases}
+1,& (i,j)=(i_{1},j^{*}), \\
-1,& (i,j)=(i_{2},j^{*}), \\
0,& \text{otherwise}.
\end{cases}
\end{align*}
The {\em swap} $\{k_1,k_2 \} : i_1 \swapj{j^*} i_2$ for $\cS$ is an operation transforming $\cS$ into another multiset $\cS^{\prime}$ of matrices defined by
\begin{align*}
\cS^{\prime} = ( \cS \setminus \{ \Bg_{k_{1}} , \Bg_{k_{2}} \} )
\cup \{ \Bg_{k_{1}}+\Bz_{k_{1}}, \Bg_{k_{2}}+\Bz_{k_{2}} \}.
\end{align*}
Note that the resulting $\cS^{\prime}$ has the same sums of weights of each edge as the original $\cS$, although the elements $\cS^{\prime}$ may not be graphs in Definitions \ref{def:proper}--\ref{def:collision}.

Let us consider $n\in\N$ swaps on the same pair of graphs $g_{k_{1}},g_{k_{2}}\in\cS$ and denote them as
\begin{align*}
(\Bz_{k_{1}}^{(1)},\Bz_{k_{2}}^{(1)}),\ldots,(\Bz_{k_{1}}^{(n)},\Bz_{k_{2}}^{(n)}).
\end{align*}
Consider the following operation, which transforms a multiset $\cS$ into another multiset $\cS^{\prime}$ without changing sums of weights of each edge:
\begin{align*}
\cS^{\prime} = ( \cS \setminus \{ \Bg_{k_{1}} , \Bg_{k_{2}} \} )
\cup \{ \Bg_{k_{1}}+\sum_{l=1}^{n}\Bz_{k_{1}}^{(l)}, \Bg_{k_{2}}+\sum_{l=1}^{n}\Bz_{k_{2}}^{(l)} \}.
\end{align*}
We call this operation a {\em swap operation among two graphs of $\cS$} and denote it as $\cS\swapoperation{\{k_{1},k_{2}\}}\cS^{\prime}$ or merely $\cS\swapoperation{}\cS^{\prime}$.
If both of $\cS$ and $\cS^{\prime}$ are proper, the operation is nothing but the move of degree two.

\begin{lemma}
\label{lem:collision}
Let $\cS=\{ \Bg_{1},\ldots,\Bg_{N}\}$ be a multiset of graphs without any improper edge and suppose that the $k$th and the $k^{\prime}$th graphs contain some collisions.
If $\sum_{j=1}^{J} (g_{k}(ij)+g_{k^{\prime}}(ij))=2r_{i}$ for each $i\in[I]$, we can resolve all the collisions by a swap operation among these two graphs.
\end{lemma}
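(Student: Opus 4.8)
=== BEGIN PROOF PROPOSAL ===

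\textbf{Setup and overall strategy.} The plan is to argue constructively: given two graphs with collisions $\Bg_k, \Bg_{k'}$ satisfying $\sum_j (g_k(ij)+g_{k'}(ij)) = 2r_i$ for every $i$, I would produce an explicit sequence of swaps on the pair $\{k,k'\}$ that removes every collision while keeping the multiset free of improper edges, and then package this sequence as a single swap operation $\cS \swapoperation{\{k,k'\}} \cS'$. Let $i^*$ be the colliding vertex in $\Bg_k$, so $\sum_j g_k(i^* j) = r_{i^*}+1$. The hypothesis $\sum_j(g_k(i^* j)+g_{k'}(i^* j)) = 2r_{i^*}$ then forces $\sum_j g_{k'}(i^* j) = r_{i^*}-1$, i.e.\ wherever $\Bg_k$ has a surplus row, $\Bg_{k'}$ has a matching deficit, and vice versa. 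The core idea is therefore a \emph{transfer}: move the excess unit of $\Bg_k$ at its colliding row down into the deficient row of $\Bg_{k'}$, routed through a shared column so that column sums $\Bc$ are preserved by each individual swap.

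\textbf{Key steps.} First I would record the ``surplus/deficit'' structure. For each $i$ let $\delta_k(i) = \sum_j g_k(ij) - r_i$ and $\delta_{k'}(i) = \sum_j g_{k'}(ij) - r_i$; the column sums of both graphs are already $\Bc$, the weights are non-negative, and the hypothesis gives $\delta_k(i) + \delta_{k'}(i) = 0$ for all $i$, so the two deviation patterns are exact negatives. Each colliding vertex contributes $\delta = +1$, and since the totals balance, the $+1$ rows of $\Bg_k$ are paired off against $+1$ rows of $\Bg_{k'}$ (equivalently $-1$ rows of $\Bg_k$). Second, I would handle one collision at a time: pick a row $i_1$ with $\delta_k(i_1)=+1$ (colliding in $\Bg_k$) and a row $i_2$ with $\delta_k(i_2)=-1$ (so $\delta_{k'}(i_2)=+1$, colliding in $\Bg_{k'}$). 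I need a column $j^*$ in which I can simultaneously decrease $g_k(i_1 j^*)$ and increase $g_k(i_2 j^*)$ (with the opposite changes in $\Bg_{k'}$) without creating a negative entry; the swap $\{k,k'\}: i_1 \swapj{j^*} i_2$ does exactly this and lowers $\delta_k(i_1)$ to $0$ while raising $\delta_k(i_2)$ to $0$, resolving both collisions at once. Third, I would iterate over all colliding pairs; because each swap zeroes out one $+1$ and one $-1$ of the deviation vector and leaves all other rows' sums untouched, after finitely many swaps every row sum of both graphs equals $r_i$, so both become proper and all collisions are gone. Finally I would assemble these swaps, all performed on the single pair $\{k,k'\}$, into one swap operation via the construction $\Bg_{k}+\sum_l \Bz_k^{(l)}$, $\Bg_{k'}+\sum_l \Bz_{k'}^{(l)}$ preceding the lemma.

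\textbf{The main obstacle.} The delicate point is \emph{non-negativity}: I must guarantee the existence, at each step, of a column $j^*$ where the move does not push any weight below zero, and that intermediate multisets never acquire an improper edge (a $-1$ entry). Decreasing $g_k(i_1 j^*)$ needs $g_k(i_1 j^*) \ge 1$, while increasing $g_{k'}(i_1 j^*)$ is harmless, and symmetrically at row $i_2$ I decrease $g_{k'}(i_2 j^*)$, needing $g_{k'}(i_2 j^*) \ge 1$. So the real requirement is a column $j^*$ with $g_k(i_1 j^*) \ge 1$ \emph{and} $g_{k'}(i_2 j^*) \ge 1$ simultaneously. I expect to establish this by a counting or pigeonhole argument: since row $i_1$ of $\Bg_k$ has total $r_{i_1}+1 \ge 1$ it is supported on some columns, and since row $i_2$ of $\Bg_{k'}$ has total $r_{i_2}+1 \ge 1$ it too has support; if these supports were disjoint from a common usable column one derives a contradiction with the column-sum constraint $\Bc$ and the balancing hypothesis. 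Making this existence claim airtight — possibly by choosing an alternating path of edges through shared columns when no single column works, and showing such a path stays non-negative — is the step I would spend the most care on; the rest is bookkeeping on the deviation vector $\delta$.

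=== END PROOF PROPOSAL ===
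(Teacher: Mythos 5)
Your overall plan has the right shape, but the step you yourself flag as ``the main obstacle'' is not merely delicate --- the existence claim you hope to prove there is \emph{false}, and the fallback you mention only in passing (alternating paths) is in fact the entire content of the paper's proof. Concretely, your scheme resolves a collision pair $(i_1,i_2)$ by a single swap $\{k,k'\}\colon i_1 \swapj{j^*} i_2$, which requires a column $j^*$ with $g_{k}(i_1 j^*)\ge 1$ \emph{and} $g_{k'}(i_2 j^*)\ge 1$. Such a column need not exist, and no pigeonhole argument can produce one. Take $I=3$, $J=2$, $\Br=(1,2,1)$, $\Bc=(2,2)$ and
\begin{equation*}
\Bg_{k}=\begin{pmatrix} 2 & 0\\ 0 & 2\\ 0 & 0\end{pmatrix},\qquad
\Bg_{k'}=\begin{pmatrix} 0 & 0\\ 2 & 0\\ 0 & 2\end{pmatrix}.
\end{equation*}
Both have column sums $\Bc$, all hypotheses of the lemma hold ($2+0=2r_1$, $2+2=2r_2$, $0+2=2r_3$), vertex $1$ collides in $\Bg_{k}$ (row sum $2=r_1+1$) and vertex $3$ collides in $\Bg_{k'}$. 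But row $1$ of $\Bg_{k}$ is supported only on column $1$, while row $3$ of $\Bg_{k'}$ is supported only on column $2$, so \emph{no} single swap between rows $1$ and $3$ preserves non-negativity (and the swap in the opposite orientation only worsens the collisions). The resolution must be routed through the intermediate row $2$: first $\{k,k'\}\colon 1 \swapj{1} 2$, then $\{k,k'\}\colon 2 \swapj{2} 3$. Note that after the first of these swaps the surplus has merely moved to row $2$ of $\Bg_{k}$, creating a new temporary collision; this also breaks your invariant that ``each swap zeroes out one $+1$ and one $-1$ of the deviation vector and leaves all other rows' sums untouched.''

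What is missing, then, is precisely a proof that a suitable chain of columns and intermediate rows always exists and terminates at the matching deficit row, without getting stuck on non-negativity. This is what the paper supplies: it relabels the vertex symbols so that (essentially) every symbol occurs at most once in each of the two graphs, encodes the pair as a $2\times s$ matrix of symbols with $s=\sum_j c_j$, and observes that the induced directed graph on the $s$ slots decomposes into disjoint paths and cycles; the component starting at an excess copy of the colliding symbol is a path, and performing the swaps along that path resolves the collision without creating any new one, after which the argument is repeated and all swaps are combined into one swap operation. Your surplus/deficit bookkeeping ($\delta_k(i)+\delta_{k'}(i)=0$ for all $i$, all deviations in $\{-1,0,+1\}$) is correct and matches the paper's setup, but without the path/cycle decomposition (or an equivalent argument) the proof does not go through.
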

\begin{proof}
We may assume $g_{k}(ij)=0$ or $g_{k^{\prime}}(ij)=0$ for each $i\in[I], j\in[J]$.
Let $\bar{\Bg}_{k}=(\bar{g}_{k}(j))$ and $\bar{\Bg}_{k^{\prime}}=(\bar{g}_{k^{\prime}}(j))$ be the $J$-dimensional row vectors whose $j$th elements $\bar{g}_{k}(j)$ and $\bar{g}_{k^{\prime}}(j)$ are the multisets of symbols defined by
\begin{align*}
\bar{g}_{k}(j) :=
\{ \underbrace{1,\ldots,1}_{g_{k}(1j)},\ldots, \underbrace{I,\ldots,I}_{g_{k}(Ij)} \}, \quad
\bar{g}_{k^{\prime}}(j) :=
\{ \underbrace{1,\ldots,1}_{g_{k^{\prime}}(1j)},\ldots, \underbrace{I,\ldots,I}_{g_{k^{\prime}}(Ij)} \},
\qquad j\in[J].
\end{align*}
Suppose that the vertex $i\in[I]$ collides in $\Bg_{k}$.
This means that the symbol $i$ appears $(r_{i}+1)$ times in $\bar{\Bg}_{k}$ and $(r_{i}-1)$ times in $\bar{\Bg}_{k^{\prime}}$.
To resolve the collision of $i$, we temporarily assign the different labels to vertices as follows.
First, we assign $i_{1},\ldots,i_{r_{i}-1}$ to $(r_{i}-1)$ $i$'s in each of $\bar{\Bg}_{k}$ and $\bar{\Bg}_{k^{\prime}}$.
Second, for each vertex not colliding in these two graphs, say $i^{\prime}$, we assign $i_{1}^{\prime},\ldots,i_{r_{i^{\prime}}}^{\prime}$ to $r_{i^{\prime}}$ $i^{\prime}$'s in each of $\bar{\Bg}_{k}$ and $\bar{\Bg}_{k^{\prime}}$.
Finally, for each colliding vertex different from $i$, say $\idp$, we assign $\idp_{1},\ldots,\idp_{r_{\idp}-1}$ to $(r_{\idp}-1)$ $\idp$'s in each graph and $\hat{i}_{1}^{\prime\prime},\hat{i}_{2}^{\prime\prime}$ to the remaining two $\idp$'s.
At this point, each symbol except $i$ appears once in each of $\bar{\Bg}_{k}$ and $\bar{\Bg}_{k^{\prime}}$.

Let $s=\sum_{j=1}^{J}c_{j}$ and define the $2\times s$ matrix $D=(d_{K\alpha})$ satisfying the following equations as multisets:
\begin{align*}
\{d_{K\alpha},\ldots,d_{K(\alpha+c_{j}-1)}\}=\bar{g}_{K}(j),\quad K=k,k^{\prime}, \ \alpha=\sum_{m=1}^{j-1}c_{m}+1, \ j=1,\ldots,J.
\end{align*}
Let $G$ be a graph on the vertex set $[s]$ defined as follows: a directed edge $(\alpha,\beta )$ exists for $\alpha,\beta \in[s]$ if and only if $d_{k\beta}=d_{k^{\prime}\alpha}$.
The graph $G$ consists of disjoint directed paths and cycles.
Then, there exists a path starting from a vertex $\gamma\in[s]$ with $d_{k\gamma}=i$.
This path defines the swap operation among $\Bg_{k}$ and $\Bg_{k^{\prime}}$ with the original labels of vertices, which resolves the collision of $i$ without causing any new collision.
Repeatedly applying this discussion, we obtain the sequence of the swap operations resolving collisions among the two graphs.
Combining them into one swap operation, we obtain the desired swap operation among $\Bg_{k}$ and $\Bg_{k^{\prime}}$.
\end{proof}

\begin{lemma}
\label{lem:resolve-improper}
Let $\cI=\{ \Bg_{1}, \ldots, \Bg_{N} \}$ be an improper multiset with $g_{k}(ij)=-1$.
Then, by a swap operation among two graphs, $\cI$ can be transformed to a proper multiset.
\end{lemma}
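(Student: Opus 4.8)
The plan is to reduce the statement to Lemma~\ref{lem:collision}: with a single swap I will turn the improper graph, together with one of its proper partners, into two graphs with collision, and then let Lemma~\ref{lem:collision} finish the job. First I would locate a partner graph. Since $\cI$ is improper with $g_k(ij)=-1$ and every other graph proper, the multiset condition $\sum_{l} g_l(ij)\ge 0$ forces some proper graph $\Bg_{k'}$, $k'\neq k$, to satisfy $g_{k'}(ij)\ge 1$; in particular $r_i\ge 1$. Moreover, because $\Bg_k$ has column sum $\Bc$ and $g_k(ij)=-1$, we have $\sum_{i''\neq i} g_k(i''j)=c_j+1\ge 1$, so there is a row $i'\neq i$ with $g_k(i'j)\ge 1$.

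Next I would apply the single swap $\{k,k'\}: i'\swapj{j} i$. In $\Bg_k$ this raises $g_k(ij)$ from $-1$ to $0$ and lowers $g_k(i'j)$ by one (which stays $\ge 0$), so $\Bg_k$ becomes non-negative; its column sums are unchanged, while row $i$ now sums to $r_i+1$ and row $i'$ to $r_{i'}-1$. Thus $\Bg_k$ is a graph with a collision at $i$ in the sense of Definition~\ref{def:collision}. Symmetrically $\Bg_{k'}$ stays non-negative (here $g_{k'}(ij)\ge 1$ and $r_i\ge 1$ are used), keeps column sum $\Bc$, and acquires a single collision at $i'$. Every other graph is untouched and remains proper, so after this one swap the multiset has no improper edge and exactly two graphs with collision.

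Finally, since a swap preserves $\Bg_k+\Bg_{k'}$ and both graphs originally had row sum $\Br$, the balance identity $\sum_{j} (g_k(i''j)+g_{k'}(i''j))=2r_{i''}$ holds for every $i''$. Hence the two collision graphs $\Bg_k,\Bg_{k'}$ meet the hypotheses of Lemma~\ref{lem:collision}, which supplies a swap operation among these two graphs resolving both collisions and returning them to proper graphs. Composing this with the initial swap yields a single swap operation among the two graphs $k,k'$ transforming $\cI$ into a proper multiset, as required.

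The routine part is the bookkeeping in the last two paragraphs: checking that the single swap really produces collision graphs (non-negativity together with the ``exactly one excess vertex'' structure) and that the $2r_{i''}$ balance required by Lemma~\ref{lem:collision} survives. Both are immediate from the invariance of column sums and of $\Bg_k+\Bg_{k'}$ under swaps. The only genuine input is the existence of the partner $\Bg_{k'}$ and of the row $i'$, which come directly from the multiset condition and the column-sum identity; the real work of untangling the collisions is then delegated to Lemma~\ref{lem:collision}, which I take as already proved. I therefore expect the \emph{recognition} that one swap converts the improper configuration into a pair of balanced collision graphs—rather than any hard new estimate—to be the crux of the argument.
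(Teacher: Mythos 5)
Your proof is correct and takes essentially the same route as the paper: locate a partner graph $\Bg_{k'}$ with $g_{k'}(ij)>0$ via $\sum_l g_l(ij)\geq 0$, locate a row $i'\neq i$ with $g_k(i'j)>0$ via the column-sum identity, perform a single swap in column $j$ between graphs $k$ and $k'$ to repair the improper edge at the cost of one collision in each graph, and then invoke Lemma \ref{lem:collision} (whose $2r_i$ balance hypothesis holds since the swap preserves $\Bg_k+\Bg_{k'}$) to finish with one combined swap operation. Incidentally, your swap direction $i'\swapj{j}i$ is the one that actually keeps both graphs non-negative under the paper's sign convention for swaps (the paper writes $i\swapj{j}i'$, which read literally would drive $g_k(ij)$ to $-2$), so your bookkeeping is if anything the more careful rendering of the same argument.
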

\begin{proof}
Choose $i^{\prime}\in[I]$ with $i^{\prime}\neq i$ and $g_{k}(i^{\prime}j)>0$.
Since $\sum_{l=1}^{N}g_{l}(ij)\geq 0$, there exists $k^{\prime}\in[N]$ with $g_{k^{\prime}}(ij)>0$.
Perform a swap $\{k,k^{\prime}\}:i\swapj{j}i^{\prime}$ to resolve the improper element.
Then, $i$ collides in $\Bg_{k^{\prime}}$ and $i^{\prime}$ collides in $\Bg_{k}$.
Since $i$ (resp. $i^{\prime}$) appears $2r_{i}$ (resp. $2r_{i^{\prime}}$) times in the first two graphs in total, we can resolve these collisions by Lemma \ref{lem:collision} by a swap operation among these two graph.
Combining the process, we obtain a swap operation among two graphs transforming $\cI$ to a proper multiset.
\end{proof}

\begin{definition}
\label{def:resolvable}
We call the pair of two graphs $\Bg_{k}$ and $\Bg_{k^{\prime}}$ in Lemma \ref{lem:resolve-improper} a {\em resolvable pair} and denote it as $[\kim,\kpr]$.
\end{definition}

\begin{definition}
\label{def:compatible}
A swap operation among two graphs labeled by $A=\{k,k^{\prime}\}$ in $\cI \swapoperation{A} \cI^{\prime}$ is {\em compatible} with improper multisets $\cI$ and $\cI^{\prime}$ if there exists a common resolvable pair $[\kim,\kpr]$ of $\cI$ and $\cI^{\prime}$ such that $A\cap\{\kim,\kpr\}\neq\emptyset$.
\end{definition}

\begin{lemma}
\label{lem:key1}
Let $\cP=\{ \Bg_{1}, \ldots, \Bg_{N} \},\hat{\cP}=\{ \hat{\Bg}_{1}, \ldots, \hat{\Bg}_{N} \}$ be two proper multisets in $\cF_{A(I,J)_{(\Br, \Bc)},\Be}$ and suppose $\Bg_{k}\neq \hat{\Bg}_{k^{\prime}}$ for some $k,k^{\prime}$.
Then, their size $D_{\Bg_{k}, \hat{\Bg}_{k^{\prime}}}$ of differences can be decreased by a swap operation among two graphs of $\cP$, such that if the resulting multiset is not proper, it is improper and its improper graph and the $k$th graph form a resolvable pair.
\end{lemma}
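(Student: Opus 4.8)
The plan is to realize the reduction as a rectangle (an alternating $4$-cycle) move on $\Bg_k$, executed as two swaps on a single pair of graphs. First I would form the difference $\Bz=\Bg_k-\hat{\Bg}_{k'}$, which is a nonzero integer matrix with vanishing row and column sums because both graphs are proper with margins $\Br,\Bc$. Starting from any edge $(i_1,j_1)$ with $z(i_1,j_1)>0$ (an excess of $\Bg_k$), the vanishing column sum of $\Bz$ in column $j_1$ yields a row $i_2\neq i_1$ with $z(i_2,j_1)<0$, and then the vanishing row sum of $\Bz$ in row $i_2$ yields a column $j_2\neq j_1$ with $z(i_2,j_2)>0$. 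This produces the four distinct corners $(i_1,j_1),(i_2,j_1),(i_2,j_2),(i_1,j_2)$ of a rectangle, with $\Bg_k$ in excess at the two diagonal corners $(i_1,j_1)$ and $(i_2,j_2)$.

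Next I would move $\Bg_k$ around this rectangle, subtracting $1$ at the two excess corners and adding $1$ at the other two. Since an excess corner has $g_k\ge 1$, the matrix stays non-negative, and since a rectangle move has vanishing margins, $\Bg_k$ remains proper. The three corners $(i_1,j_1),(i_2,j_1),(i_2,j_2)$ each lower $D_{\Bg_k,\hat{\Bg}_{k'}}$ by one, while the last corner $(i_1,j_2)$ changes it by at most one in the other direction, so $D_{\Bg_k,\hat{\Bg}_{k'}}$ strictly decreases. To present this as a swap operation among two graphs I would perform the two swaps $\{k,k''\}\colon i_1\swapj{j_1}i_2$ and $\{k,k''\}\colon i_2\swapj{j_2}i_1$; their mirror images on $\Bg_{k''}$ again trace a rectangle, so $\Bg_{k''}$ retains the margins $\Br,\Bc$. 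The partner index $k''$ is selected through the fiber constraint $\sum_m\Bg_m=\Be$: at the shortage corner $(i_2,j_1)$ we have $e_{i_2j_1}=\sum_m g_m(i_2,j_1)\ge\hat g_{k'}(i_2,j_1)>g_k(i_2,j_1)$, so $\sum_{m\neq k}g_m(i_2,j_1)>0$ and some $k''\neq k$ has $g_{k''}(i_2,j_1)>0$, whence the swap leaves $\Bg_{k''}$ non-negative there.

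The only remaining place where $\Bg_{k''}$ can go negative is the corner $(i_1,j_2)$, where the mirror move subtracts $1$. Hence $\Bg_{k''}$ acquires at most a single entry equal to $-1$, so the resulting multiset is either proper or improper with unique improper edge $(i_1,j_2)$. In the improper case the $k$th graph has just gained a unit there, so $g_k(i_1,j_2)\ge 1>0$, which is precisely the condition making the improper graph $\Bg_{k''}$ together with $\Bg_k$ a resolvable pair in the sense of Lemma \ref{lem:resolve-improper} and Definition \ref{def:resolvable}. I expect the main obstacle to be exactly this bookkeeping at the fourth corner: one must confirm that no second entry of $\Bg_{k''}$ is driven negative and that $\Bg_k$ supplies the positive weight required for resolvability, both of which rest on the careful choice of the rectangle and of the partner $k''$ dictated by $\sum_m\Bg_m=\Be$.
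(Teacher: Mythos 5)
Your proposal is correct and follows essentially the same route as the paper's proof: a rectangle move on $\Bg_k$ realized as two swaps on a single pair $\{k,k''\}$, with the partner $k''$ chosen via the fiber constraint $\sum_m \Bg_m=\Be$ to be positive at the shortage corner, and with the fourth corner of the partner possibly dropping to $-1$, in which case it forms a resolvable pair with $\Bg_k$, which has just gained a unit there. If anything, your choice of three sign-determined corners of $\Bz=\Bg_k-\hat{\Bg}_{k'}$ (excess, shortage, excess) guarantees the strict decrease of $D_{\Bg_k,\hat{\Bg}_{k'}}$ slightly more cleanly than the paper, which picks the shortage and excess in one row and then only requires $g_k(i'j)>0$ at the third corner; read literally that ensures just non-increase, and needs $i'$ with $g_k(i'j)>\hat{g}_{k'}(i'j)$ to match your bound.
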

\begin{proof}
Since $\Bg_{k}\neq \hat{\Bg}_{k^{\prime}}$, there exist $i\in[I]$ and  $j,j^{\prime}\in[J]$ satisfying $g_{k}(ij)<\hat{g}_{k^{\prime}}(ij)$ and $g_{k}(ij^{\prime})>\hat{g}_{k^{\prime}}(ij^{\prime})$.
Since $\cP$ and $\cP^{\prime}$ belong to $\cF_{A(I,J)_{(\Br, \Bc)},\Be}$, there exists $\kdp\in [N]$ with $\kdp \neq k$ and $g_{\kdp}(ij)>0$.
Choose $i^{\prime}\in[I]$ satisfying $i^{\prime}\neq i$ and $g_{k}(i^{\prime}j)>0$ and consider a swap operation $\{k,\kdp\} : i^{\prime}\swapj{j} i \swapj{j^{\prime}} i^{\prime}$ to $\cP$.
This operation decreases $D_{\Bg_{k}, \hat{\Bg}_{k^{\prime}}}$.
When $g_{\kdp}(i^{\prime}j^{\prime})>0$, the resulting multiset is proper.
Otherwise, the resulting multiset is improper where $[\kdp,k]$ forms a resolvable pair.
This proves the claim.
\end{proof}

\begin{lemma}
\label{lem:key2}
Let $\cI=\{ \Bg_{1}, \ldots, \Bg_{N} \}$ be an improper multiset and $\hat{\cP}=\{ \hat{\Bg}_{1}, \ldots, \hat{\Bg}_{N} \}$ be a proper multiset with the same sums $\Be$ of weights of edges.
Consider the $k^{\prime}$th graph $\hat{\Bg}_{k^{\prime}}$ of $\hat{\cP}$ and choose any resolvable pair $[\kim,\kpr]$ of $\cI$.
Then, by at most two swap operations among two graphs of $\cI$, we can 
(i) decrease the size $D_{\Bg_{\kpr},\hat{\Bg}_{k^{\prime}}}$ of differences, or 
(ii) make $\cI$ proper without changing $\Bg_{\kpr}$.
Furthermore, if the resulting multiset is not proper, then it is an improper multiset with a resolvable pair consisting of its improper graph and the $\kpr$th graph, and each intermediate swap operation between two consecutive improper multisets is compatible with them.
\end{lemma}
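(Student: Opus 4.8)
The plan is to resolve the unique improper edge of $\cI$ while simultaneously driving the pivot graph $\Bg_{\kpr}$ toward the target $\hat{\Bg}_{k^{\prime}}$, extending the argument of Lemma~\ref{lem:key1} and using Lemmas~\ref{lem:collision} and~\ref{lem:resolve-improper} to account for the collisions that such a resolution creates. Write $(i^{*},j^{*})$ for the improper edge, so $g_{\kim}(i^{*}j^{*})=-1$, and by the defining property of the resolvable pair $[\kim,\kpr]$ we have $g_{\kpr}(i^{*}j^{*})>0$. The central bookkeeping device is the edge-sum identity $\sum_{l}g_{l}(i^{*}j^{*})=\sum_{l}\hat{g}_{l}(i^{*}j^{*})=e_{i^{*}j^{*}}$, available because $\cI$ and $\hat{\cP}$ have the same sums $\Be$ of weights of edges. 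Throughout, every swap operation I introduce is performed on a pair of graphs containing $\kpr$, and the improper edge is always carried on a graph paired with $\kpr$; this is exactly what makes each intermediate operation compatible in the sense of Definition~\ref{def:compatible} and keeps $\kpr$ in the resolvable pair of any non-proper outcome, as the final clause demands.

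First I would settle the alternative leading to outcome (ii). Suppose some graph $\Bg_{m}$ with $m\neq\kpr$ satisfies $g_{m}(i^{*}j^{*})>0$. Then $[\kim,m]$ is again a resolvable pair, and Lemma~\ref{lem:resolve-improper} turns $\cI$ into a proper multiset by a single swap operation among $\Bg_{\kim}$ and $\Bg_{m}$, leaving $\Bg_{\kpr}$ untouched; this is outcome (ii). Such an $m$ must exist whenever $\Bg_{\kpr}=\hat{\Bg}_{k^{\prime}}$: otherwise $\kpr$ is the only graph carrying positive weight at $(i^{*},j^{*})$, forcing $e_{i^{*}j^{*}}=g_{\kpr}(i^{*}j^{*})-1$, while the edge-sum identity and non-negativity of $\hat{\cP}$ give $e_{i^{*}j^{*}}\ge\hat{g}_{k^{\prime}}(i^{*}j^{*})=g_{\kpr}(i^{*}j^{*})$, a contradiction. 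Hence I may assume that $\kpr$ is the unique carrier of positive weight at $(i^{*},j^{*})$, and in particular $\Bg_{\kpr}\neq\hat{\Bg}_{k^{\prime}}$, so that outcome (i) is the relevant target.

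In this case the same count gives $\hat{g}_{k^{\prime}}(i^{*}j^{*})\le e_{i^{*}j^{*}}=g_{\kpr}(i^{*}j^{*})-1<g_{\kpr}(i^{*}j^{*})$, so lowering $g_{\kpr}(i^{*}j^{*})$ by one is a step toward $\hat{\Bg}_{k^{\prime}}$; moreover, since the $j^{*}$-columns of $\Bg_{\kpr}$ and $\hat{\Bg}_{k^{\prime}}$ share the sum $c_{j^{*}}$, there is a row $\tilde{i}$ with $g_{\kpr}(\tilde{i}j^{*})<\hat{g}_{k^{\prime}}(\tilde{i}j^{*})$ into which weight can be moved favorably. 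I would therefore resolve the improper edge by pairing $\kim$ with $\kpr$ as in Lemma~\ref{lem:resolve-improper}: the forced first swap raises $g_{\kim}(i^{*}j^{*})$ from $-1$ to $0$ and lowers $g_{\kpr}(i^{*}j^{*})$ by one, producing a collision in each of $\Bg_{\kim}$ and $\Bg_{\kpr}$ whose total multiplicities meet the hypothesis of Lemma~\ref{lem:collision}. The point is then to exploit the freedom in the resolving directed path of Lemma~\ref{lem:collision} so that the net rearrangement of $\Bg_{\kpr}$ lowers only entries where $g_{\kpr}>\hat{g}_{k^{\prime}}$ and raises only entries where $g_{\kpr}<\hat{g}_{k^{\prime}}$ (in particular routing the freed weight into a favorable row such as $\tilde{i}$), so that $D_{\Bg_{\kpr},\hat{\Bg}_{k^{\prime}}}$ strictly decreases. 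Exactly as in Lemma~\ref{lem:key1}, the resolution terminates in a proper multiset when the partner graph keeps non-negative weights; otherwise a single entry of a graph paired with $\kpr$ drops to $-1$, and that graph together with $\Bg_{\kpr}$ is the new resolvable pair, so the outcome is an improper multiset of the prescribed form and the operation joining the two improper multisets shares the pair containing $\kpr$, hence is compatible. A distance-reducing swap operation possibly relocating the improper edge, together with the resolution of the residual impropriety, constitute the at most two permitted swap operations among two graphs.

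The step I expect to be the main obstacle is precisely this routing inside Lemma~\ref{lem:collision}: one must show that the directed path resolving the collision can always be chosen so that the induced net change in $\Bg_{\kpr}$ is monotone toward $\hat{\Bg}_{k^{\prime}}$ (guaranteeing the strict decrease of $D_{\Bg_{\kpr},\hat{\Bg}_{k^{\prime}}}$), while ensuring that at most one new improper entry is created and that it is carried by a graph paired with $\kpr$, all within the budget of two swap operations. Matching the combinatorics of the path in the auxiliary graph of Lemma~\ref{lem:collision} to the sign pattern of $\Bg_{\kpr}-\hat{\Bg}_{k^{\prime}}$, column by column, and verifying that the compatibility requirement of the final clause survives each such step, is the delicate part; it is the transportation-polytope refinement of the Birkhoff-polytope bookkeeping of \cite{YamaguchiOgawaTakemura}.
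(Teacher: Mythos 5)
Your opening reduction is sound: splitting on whether some $\Bg_{m}$, $m\neq\kim,\kpr$, has $g_{m}(i^{*}j^{*})>0$ (in which case Lemma \ref{lem:resolve-improper} applied to the resolvable pair $[\kim,m]$ gives outcome (ii) by one swap operation, leaving $\Bg_{\kpr}$ untouched), and the count showing that otherwise $\hat{g}_{k^{\prime}}(i^{*}j^{*})\le e_{i^{*}j^{*}}=g_{\kpr}(i^{*}j^{*})-1<g_{\kpr}(i^{*}j^{*})$, are both correct. The genuine gap is in the remaining case, which is the whole content of the lemma. There you resolve the improper edge by pairing $\Bg_{\kim}$ with $\Bg_{\kpr}$ and then invoke Lemma \ref{lem:collision}, claiming the collision-resolving path can be chosen so that its net effect on $\Bg_{\kpr}$ lowers only entries where $g_{\kpr}>\hat{g}_{k^{\prime}}$ and raises only entries where $g_{\kpr}<\hat{g}_{k^{\prime}}$. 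You flag this yourself as the main obstacle, and it is not proved; nor can it be taken for granted. The path in Lemma \ref{lem:collision} is dictated by where the two graphs actually carry weight, not by the sign pattern of $\Bg_{\kpr}-\hat{\Bg}_{k^{\prime}}$: already the first swap must send the weight of $\Bg_{\kim}$ out of a row $i_{0}$ with $g_{\kim}(i_{0}j^{*})>0$ (otherwise $\Bg_{\kim}$ acquires a new negative entry and Lemma \ref{lem:collision}, whose graphs must be non-negative, no longer applies), and nothing forces such a row to coincide with your favorable row $\tilde{i}$. In general the induced rearrangement of $\Bg_{\kpr}$ may raise entries that are already too large, so the strict decrease of $D_{\Bg_{\kpr},\hat{\Bg}_{k^{\prime}}}$, the claim that at most one new improper entry arises and sits opposite $\kpr$, and the compatibility bookkeeping all remain unestablished.

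The paper closes this case without any such routing, and this is the argument you need. Work inside row $i^{*}$: since $g_{\kpr}(i^{*}j^{*})>\hat{g}_{k^{\prime}}(i^{*}j^{*})$ and both rows sum to $r_{i^{*}}$, there is a column $j^{\prime}$ with $g_{\kpr}(i^{*}j^{\prime})<\hat{g}_{k^{\prime}}(i^{*}j^{\prime})$, and then by column sums a row $i^{\prime}$ with $g_{\kpr}(i^{\prime}j^{\prime})>\hat{g}_{k^{\prime}}(i^{\prime}j^{\prime})$. If $g_{\kim}(i^{*}j^{\prime})>0$, one swap operation between $\Bg_{\kpr}$ and $\Bg_{\kim}$, consisting of the two swaps on column $j^{*}$ and on column $j^{\prime}$ in rows $i^{*},i^{\prime}$, acts on $\Bg_{\kpr}$ as a rectangle move: it lowers $(i^{*},j^{*})$ and $(i^{\prime},j^{\prime})$ and raises $(i^{*},j^{\prime})$ and $(i^{\prime},j^{*})$, so $D_{\Bg_{\kpr},\hat{\Bg}_{k^{\prime}}}$ drops by at least two, the row and column sums of both graphs are preserved exactly (no collision ever arises, so Lemma \ref{lem:collision} is not needed at all here), and the result is either proper or has its unique improper edge at $(i^{\prime},j^{*})$ of $\Bg_{\kim}$, with $[\kim,\kpr]$ still resolvable because $g_{\kpr}(i^{\prime}j^{*})$ was just increased. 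If instead $g_{\kim}(i^{*}j^{\prime})=0$, the identity $\sum_{l}g_{l}(i^{*}j^{\prime})=\sum_{l}\hat{g}_{l}(i^{*}j^{\prime})\ge\hat{g}_{k^{\prime}}(i^{*}j^{\prime})>g_{\kpr}(i^{*}j^{\prime})$ yields a third graph $\kdp$ with $g_{\kdp}(i^{*}j^{\prime})>0$, and the first of the two allowed swap operations is spent between $\Bg_{\kim}$ and $\Bg_{\kdp}$ --- never touching $\Bg_{\kpr}$, so no distance control is required there --- to make $g_{\kim}(i^{*}j^{\prime})$ positive while keeping the improper edge at $(i^{*},j^{*})$; the second operation is then the rectangle step above, and compatibility holds throughout since every intermediate improper multiset keeps $[\kim,\kpr]$ as a resolvable pair and every operation involves $\kim$.
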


\begin{proof}
We may suppose $g_{\kim}(ij)=-1$ and $g_{\kpr}(ij)>0$ for some $i\in[I]$ and $j\in[J]$.
In the cases below, when a resulting multiset is improper, $[\kim,\kpr]$ will be a resolvable pair.
\begin{description}
\item[Case 1] $\hat{g}_{k^{\prime}}(ij)\geq g_{\kpr}(ij)$.\\
Since $\sum_{l=1}^{N}g_{l}(ij) = \sum_{l=1}^{N}\hat{g}_{l}(ij)>g_{\kpr}(ij)$, there exists $k\in[N]$ such that $k\neq \kim,\kpr$ and $g_{k}(ij)>0$.
Then, $[\kim,k]$ is a resolvable pair and $\cI$ can be transformed to a proper multiset without changing $\Bg_{\kpr}$ by Lemma \ref{lem:resolve-improper}.
This corresponds to (ii) of the lemma and summarized as $\cI \swapoperation{\{ \kim,k \} } \cP$.

\item[Case 2] $\hat{g}_{k^{\prime}}(ij)<g_{\kpr}(ij)$.\\
Since $\sum_{t=1}^{J}\hat{g}_{k^{\prime}}(it)=\sum_{t=1}^{J}\hat{g}_{\kpr}(it)$, there exists $j^{\prime}\in[J]$ with $\hat{g}_{k^{\prime}}(ij^{\prime})>g_{\kpr}(ij^{\prime})$.
Fix some $i^{\prime}\in[I]$ with $g_{\kpr}(i^{\prime}j^{\prime})>\hat{g}_{k^{\prime}}(i^{\prime}j^{\prime})$ arbitrarily.
\begin{description}
\item[Case 2-1]  $g_{\kim}(ij^{\prime})>0$.\\
We perform the swap operations $\{\kpr,\kim\}: i\swapj{j}i^{\prime}$ and $\{\kim,\kpr\}: i^{\prime}\swapj{j^{\prime}}i$ to $\cI$ at the same time, which decrease $D_{\Bg_{\kpr},\hat{\Bg}_{k^{\prime}}}$.
If $g_{\kim}(i^{\prime}j)>0$, the resulting multiset is proper.
Otherwise, the resulting multiset is improper.
This corresponds to (i) of the lemma and is summarized as $\cI \swapoperation{\{ \kpr,\kim \} } \cP$ or $\cI \swapoperation{\{ \kpr,\kim \} } \cI$.
\item[Case 2-2] $g_{\kim}(ij^{\prime})=0$.\\
Since $\hat{g}_{k^{\prime}}(ij^{\prime})>g_{\kpr}(ij^{\prime})$, there exists $\kdp\in[N]$ such that $\kdp\neq \kpr,\kim$ and $\hat{g}_{k^{\prime}}(ij^{\prime})>0$.
Fix $\idp\in[I]$ with $x_{\idp\kim}>0$ arbitrarily.
Consider the swap $\{\kdp,\kim\}:i\swapj{j^{\prime}}\idp$.
Then, $i$ collides in the $\kim$th graph and $\idp$ collides in the $\kdp$th graph.
By the similar argument as the proof of Lemma \ref{lem:collision}, we can resolve these collisions by a swap operation among the $\kim$th and the $\kdp$th graphs, which leaves $g_{\kim}(ij)=-1$ and makes $g_{\kim}(ij^{\prime})$ positive.
These operation can be done by a singe swap operation among the two graphs.
After that, this case reduces to Case 2-1.
Together with the subsequent operation of Case 2-1, Case 2-2 is summarized as $\cI \swapoperation{\{ \kim,\kdp \} } \cI \swapoperation{\{ \kpr,\kim \} } \cP$ or $\cI \swapoperation{\{ \kim,\kdp \} } \cI \swapoperation{\{ \kpr,\kim \} } \cI$.
\end{description}
\end{description} 
\end{proof}

We now give a proof of Theorem \ref{thm:tpoly} by the similar argument as \cite{YamaguchiOgawaTakemura}.
Let $\cP$ and $\hat{\cP}$ be two proper multisets belonging to the same fiber $\cF_{A(I,J)_{(\Br, \Bc)},\Be}$.
Choose any $k$th graph $\Bg_{k}$ of $\cP$ and any $k^{\prime}$th graph $\hat{\Bg}_{k^{\prime}}$ of $\hat{\cP}$ with $\Bg_{k}\neq \hat{\Bg}_{k^{\prime}}$.
Thanks to Lemmas \ref{lem:key1} and \ref{lem:key2}, allowing some intermediate improper multisets, we can make $\Bg_{k}$ identical with $\hat{\Bg}_{k^{\prime}}$ by a sequence of swap operations among two graphs of $\cP$.
We throw away this common graph from the two multisets and repeat the procedure.
In the end, $\cP$ can be fully transformed to $\hat{\cP}$.
Let us decompose the whole process of transforming $\cP$ to $\hat{\cP}$ into segments that consist of transformations from a proper multiset to another proper multiset with improper intermediate steps.
One segment is depicted as
$\cP_{1}\longleftrightarrow \cI_{1}\longleftrightarrow \cdots \longleftrightarrow \cI_{m}\longleftrightarrow \cP_{m}$
where each $\longleftrightarrow$ denotes a swap operation among two graphs in Lemmas \ref{lem:key1} or \ref{lem:key2}.
Then, for any consecutive multisets $\cI_{l}$ and $\cI_{l+1}, l=1,\ldots,m-1$, there exist proper multisets $\cP_{l}, \cP_{l}^{\prime}, l=1,\ldots, m-1$, satisfying
\begin{align*}
& \cP_{l}\longleftrightarrow \cI_{l}\longleftrightarrow \cI_{l+1} \longleftrightarrow \cP_{l+1}^{\prime}, \\
& \cP_{l}^{\prime}\longleftrightarrow \cI_{l} \longleftrightarrow \cP_{l}.
\end{align*}
By the compatibility of the swap operation in $\cI_{l}\longleftrightarrow \cI_{l+1}$, $\cP_{l}$ can be transformed to $\cP_{l+1}^{\prime}$ by a swap operation among three graphs.
Since $\cP_{l}^{\prime}\longleftrightarrow \cI_{l}$ and $\cI_{l} \longleftrightarrow \cP_{l}$ involve a common improper graph, 
$\cP_{l}^{\prime}$ can also be transformed to $\cP_{l}$ by a swap operation among three graphs.
Therefore, the process from $\cP_{1}$ to $\cP_{m}$ is realized by swap operations among three graphs as
\begin{center}
\includegraphics[width=6.5cm]{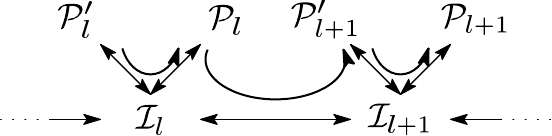}
\end{center}
This proves Theorem \ref{thm:tpoly}.

\subsection{Lower bound of Markov complexity for complete bipartite graphs}
\label{subsec:lower-MC}

In this section we give a lower bound for ${\rm MC}(A(I,J))$, $3\le I\le J$.

\begin{proposition}
\label{prop:lower-bound-IJ} For $3\leq I\leq J$,
\begin{align}
{\rm MC}(A(I,J))\geq(I-2)(J^2-1)/4+J-1 .\label{lower bound}
\end{align}
\end{proposition}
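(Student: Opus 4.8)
The governing principle is that the Markov complexity dominates the type of \emph{every} indispensable move of every Lawrence lifting: if $\Bz$ is an indispensable move for some $A(I,J)^{(N)}$, then $\Bz$ lies in every Markov basis of $A(I,J)^{(N)}$, so any set of moves of type at most $m$ that forms a Markov basis must contain $\Bz$, forcing $m\ge {\rm type}(\Bz)$. Hence ${\rm MC}(A(I,J))\ge {\rm type}(\Bz)$. The plan is therefore to exhibit a \emph{single} indispensable move $\Bz=(\Bz_1,\dots,\Bz_N)$ for a suitable $A(I,J)^{(N)}$ whose type $N$ equals the right-hand side of \eqref{lower bound}.

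The natural slices to use are the indispensable moves of the base configuration $A(I,J)$ itself. For the incidence matrix of a complete bipartite graph the unique minimal Markov basis is the set of basic degree-two moves (the $2\times 2$ swaps, i.e.\ the $4$-cycles)
\[
\Be_{ij}+\Be_{i'j'}-\Be_{ij'}-\Be_{i'j},\qquad i\neq i',\ j\neq j',
\]
and these are precisely its nonzero indispensable moves. Consequently, any move $\Bz$ of $A(I,J)^{(N)}$ all of whose slices are such $4$-cycles meets the hypothesis of Proposition \ref{lem:indispensable-for-lawrence}: it is indispensable if and only if $\sum_{k\in M}\Bz_k\neq 0$ for every nonempty proper subset $M\subset[N]$, and in that case ${\rm type}(\Bz)=N$ because every slice is nonzero. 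This reduces the whole problem to constructing $N=(I-2)(J^2-1)/4+J-1$ oriented $4$-cycles summing to zero with no vanishing proper subsum.

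For the construction I would adapt the Graver-complexity lower-bound constructions of \cite{berstein09} and \cite{kudo-takemura} to the indispensable setting, laying out the $4$-cycles in blocks indexed by the rows of $[I]\times[J]$: a two-row base block contributing the $J-1$ term, and a block of $(J^2-1)/4$ four-cycles for each of the remaining $I-2$ rows, with orientations chosen so that every edge weight cancels in the total sum. The main obstacle, and the real content of the proof, is verifying that no proper partial sum vanishes. Here I would invoke the remark following Proposition \ref{lem:indispensable-for-lawrence}: it suffices to start from one designated slice and show that any union of slices containing it, short of the whole, leaves some edge weight unbalanced. Concretely, one arranges that each slice carries a marked edge that can be cancelled only by the uniquely prescribed neighbouring slice, so that a zero-sum subset is forced to propagate through all $N$ slices; the bipartite incidence structure together with the alternating signs of the $4$-cycles makes this chaining argument go through. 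Once this is established, the slice count is a routine summation yielding exactly the bound in \eqref{lower bound}.
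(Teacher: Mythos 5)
Your framework is exactly the paper's: a single indispensable move of some $A(I,J)^{(N)}$ forces ${\rm MC}(A(I,J))\ge{\rm type}(\Bz)$, the slices should be the $2\times 2$ swaps (four-cycles, which are indeed the indispensable moves of $A(I,J)$), and Proposition \ref{lem:indispensable-for-lawrence} with the remark following it converts indispensability into non-vanishing of proper subsums. But at the point where the actual work begins, you stop. You never exhibit the family of four-cycles; you only describe its intended block shape and assert that orientations ``can be chosen'' so that the total sum vanishes while no proper subsum does, and that a chaining argument ``goes through.'' That existence claim \emph{is} the proposition: the paper's entire proof consists of writing down the explicit slices (with $d=\lfloor J/2\rfloor$: the slices $\Bz(1,I;j,J-d+j)$ for $j=1,\dots,d$, the slices $\Bz(I-1,I;J-d+j+1,j)$, and the slices $\Bz(i,i+1;j+1,j)$ taken with multiplicity $j$, etc.) and then verifying, group by group, that cancellation of each edge forces the inclusion of the next group of slices. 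Appealing to the Graver-complexity constructions of Berstein--Onn and Kudo--Takemura does not fill this in, since Graver basis elements need not be indispensable; upgrading such a construction to an indispensable one is precisely the nontrivial step.

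Two concrete points show this is not merely a gap of exposition. First, your target count cannot literally be realized: for even $J$ the quantity $(J^2-1)/4$ is not an integer, so there is no block of ``$(J^2-1)/4$ four-cycles per remaining row.'' The paper instead constructs a move of type $(I-2)(J-d)d+2d$, which equals the right-hand side of \eqref{lower bound} only for odd $J$ and strictly exceeds it for even $J$; the bound is obtained as an inequality, not an equality of counts. Second, your proposed mechanism for ruling out vanishing subsums --- each slice carrying a marked edge cancellable only by a \emph{uniquely} prescribed neighbouring slice --- is incompatible with any construction of this size: to reach type of order $IJ^2$ using only four-cycles, slices must be repeated with multiplicities (in the paper, $\Bz(i,i+1;j+1,j)$ occurs $j$ times), and then an edge in one slice is cancelled \emph{jointly} by several identical slices. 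The paper's verification is accordingly a counting argument (``the sum of $(I-1,2)$-elements is $-2$, so we need $\Bz(I-2,I-1;3,2)\times 2$''), not a chain of unique dependencies. So the skeleton of your argument is correct and matches the paper, but the proof is missing its body.
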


For the rest of this subsection we give a proof of Proposition \ref{prop:lower-bound-IJ}.
Let $d=\lfloor J/2\rfloor$.
We display $I\times J$ two-dimensional slice as follows:
\begin{align}
\begin{array}{c|c|c|c|c|c|c|c|}
 \multicolumn{1}{c}{} & \multicolumn{1}{c}{1} & \multicolumn{1}{c}{\cdots} & \multicolumn{1}{c}{d} & \multicolumn{1}{c}{(r)} & \multicolumn{1}{c}{J-d+1} & \multicolumn{1}{c}{\cdots} & \multicolumn{1}{c}{J}\\\cline{2-8}
   1 & \multicolumn{1}{c}{z_{1,1}} & \multicolumn{1}{c}{\cdots} & \multicolumn{1}{c}{z_{1,d}} & \multicolumn{1}{c}{(z_{1,r})} & \multicolumn{1}{c}{z_{1,J-d+1}} & \multicolumn{1}{c}{\cdots} & z_{1,J}   \\
   \vdots & \multicolumn{1}{c}{\vdots} & \multicolumn{1}{c}{} & \multicolumn{1}{c}{} & \multicolumn{1}{c}{} & \multicolumn{1}{c}{} & \multicolumn{1}{c}{} & \vdots   \\
   I & \multicolumn{1}{c}{z_{I,1}} & \multicolumn{1}{c}{\cdots} & \multicolumn{1}{c}{z_{I,d}} & \multicolumn{1}{c}{(z_{I,r})} & \multicolumn{1}{c}{z_{I,J-d+1}} & \multicolumn{1}{c}{\cdots}   & z_{I,J} \\ \cline{2-8}   
\end{array}\ ,
\end{align}
where, $r=d+1$ if $J$ is odd and $r$ does not exist if $J$ is even.
We  define $\Bz(i_1, i_2; j_1, ,j_2)$ as the following $I\times J$ table:
\begin{align}
\begin{array}{c|c|c|}
 \multicolumn{1}{c}{} & \multicolumn{1}{c}{j_1}&\multicolumn{1}{c}{j_2} \\\cline{2-3}
   i_1 & \multicolumn{1}{c}{+1}  & -1   \\
   i_2 & \multicolumn{1}{c}{-1}   & +1  \\ \cline{2-3}
\end{array}\  , 
\end{align}
where other entries are $0$.

We give an indispensable move $\Bz^*=\{z^*(i,j,k)\}$ of for $A(I,J)^{(N)}$, where 
$N=(I-2)(J-d)d+2d$ is the type of $\Bz^*$. The $I\times J$ slices of $\Bz^*$ as follows:
\begin{align*}
&\Bz(1, I; j, J-d+j),
\quad j=1,\cdots,d, \\
&\Bz(I-1, I ;J-d+j+1, j),
\quad j=1,\cdots,d-1, \\
&\Bz(I-1, I ;j+1, j), 
\quad j=d,(r),\\
&\Bz(i, i+1; j+1, j) 
\times j,\quad i=1,\dots,I-2,\ j=1,\dots,d,\\
&\Bz(i, i+1 ;r+1, r) 
\times d, \quad i=1,\dots I-2,\\
&\Bz(i, i+1; J-j+1, J-j)
\times j, \quad i=1,\dots,I-2,\ j=1,\dots,d-1.
\end{align*}
It is easy checked that $\sum_{k=1}^N z^*(i,j,k)=0$ for all $i,j$ and $\Bz^*$ is 
a move for $A(I,J)^{(N)}$. Also all slices of $\Bz^*$ are indispensable.
Therefore, if we can show that $\Bz^*$ is indispensable move, then
\[
{\rm MC}(A(I,J)) \geq (I-2)(J-d)d+2d \geq(I-2)(J^2-1)/4+J-1.
\]

Now we again use the argument after Proposition \ref{lem:indispensable-for-lawrence}.
We start with the slice $\Bz(1, I; 1, J-d+1)$. 
Since the (sum of) $(I,1)$-element is $-1$,  we need a slice whose $(I,1)$-element  is $+1$. 
Therefore we need $\Bz(I-1, I; J-d+2, 1)$. Since the sum of $(I,J-d+2)$-elements is $-1$, 
we need $\Bz(1, I; 2, J-d+2)$. In the same way, 
we find that $\Bz(1, I; j, J-d+j)$, $j=1,\dots,d$, \ $\Bz(I-1, I; J-d+j+1, j)$, $j=1,\dots,d-1,$ \  and $\Bz(I-1, I; j+1, j)$, $j=d,r$, are needed.

The sum of slices so far is as follows:
\begin{align*}
\begin{array}{c|c|c|c|c|c|c|c|}
 \multicolumn{1}{c}{} & \multicolumn{1}{c}{1} & \multicolumn{1}{c}{\cdots} & \multicolumn{1}{c}{d} & \multicolumn{1}{c}{(r)} & \multicolumn{1}{c}{J-d+1} & \multicolumn{1}{c}{\cdots} & \multicolumn{1}{c}{J}\\\cline{2-8}
  1 & \multicolumn{1}{c}{+1} & \multicolumn{1}{c}{\cdots} & \multicolumn{1}{c}{+1} & \multicolumn{1}{c}{(0)} & \multicolumn{1}{c}{-1} & \multicolumn{1}{c}{\cdots} & -1   \\
  I-1 & \multicolumn{1}{c}{-1} & \multicolumn{1}{c}{\cdots} & \multicolumn{1}{c}{-1} & \multicolumn{1}{c}{(0)} & \multicolumn{1}{c}{+1} & \multicolumn{1}{c}{\cdots}   & +1  \\ \cline{2-8}   
\end{array}\ .
\end{align*}

Since the sum of $(I-1, 1)$-elements is $-1$, we need $\Bz(I-2, I-1; 2, 1)$.
Since the sum of $(I-1, 2)$-elements is $-2$, we need $\Bz(I-2, I-1; 3, 2)\times 2$.
In the same way, we find that $\Bz(I-2, I-1; j+1, j)\times j$, \ $j=1,\dots,r,$ and $\Bz(I-2, I-1; J-j+1, J-j)\times j$, $j=1,\dots,d-1,$ are needed.

The sum of slices so far is as follows:
\begin{align*}
\begin{array}{c|c|c|c|c|c|c|c|}
 \multicolumn{1}{c}{} & \multicolumn{1}{c}{1} & \multicolumn{1}{c}{\cdots} & \multicolumn{1}{c}{d} & \multicolumn{1}{c}{(r)} & \multicolumn{1}{c}{J-d+1} & \multicolumn{1}{c}{\cdots} & \multicolumn{1}{c}{J}\\\cline{2-8}
  1 & \multicolumn{1}{c}{+1} & \multicolumn{1}{c}{\cdots} & \multicolumn{1}{c}{+1} & \multicolumn{1}{c}{(0)} & \multicolumn{1}{c}{-1} & \multicolumn{1}{c}{\cdots} & -1   \\
  I-2 & \multicolumn{1}{c}{-1} & \multicolumn{1}{c}{\cdots} & \multicolumn{1}{c}{-1} & \multicolumn{1}{c}{(0)} & \multicolumn{1}{c}{+1} & \multicolumn{1}{c}{\cdots}   & +1  \\ \cline{2-8}   
\end{array}\ .
\end{align*}

In the way, we find that $\Bz(i, i+1, j+1, j)\times j\ (i=1,\dots,I-3,\ j=1,\dots,r)$ and $\Bz(i, i+1; J-j+1, J-j)\times j\ (i=1,\dots,I-3,\ j=1,\dots,d-1)$ are needed.
Hence all slices are needed for cancellation and 
this implies that $\Bz^*$ is an indispensable move.

\medskip

\begin{remark}
There are indispensable moves whose types are larger than the one in \eqref{lower bound} 
for specific $I$ and $J$.
One example is the following move $\Bz$ of $5\times5\times 32$ table for the case $I=J=5$.
Each $5\times 5$ slice is a move of degree two of the form $\Bz(i_1, i_2; j_1,j_2)$.
We now list these 32 slices. In the list,  $-(i_1, i_2; j_1, j_2)$ denotes
$-\Bz(i_1, i_2; j_1, j_2)=\Bz(i_1, i_2; j_2, j_1)$.

\bigskip
{\small
\setlength{\tabcolsep}{3pt}
\begin{tabular}{|c|cccccccc|} \hline
slice & 1 & 2 & 3 & 4 & 5 & 6 & 7 & 8\\
move  & (1,5;1,5) & $-$(1,2;1,2) & $-$(1,3;2,3) & $-$(1,2;3,4) & $-$(1,3;4,5) & $-$(2,3;1,3) & (2,4;2,4) & $-$(2,4;3,5)\\ \hline
slice & 9 & 10 & 11 & 12 & 13 & 14 & 15 & 16\\
move & $-$(2,4;3,5) & (2,5;4,5) & (2,5;4,5) & $-$(3,4;1,2) & $-$(3,5;2,4) & $-$(3,5;2,4) & (3,5;3,5) & (3,5;3,5)\\ \hline
slice & 17 & 18 & 19& 20& 21& 22& 23& 24\\
move & $-$(3,4;4,5) & $-$(3,4;4,5) & $-$(3,4;4,5) & $-$(4,5;1,3) & (4,5;2,5) & (4,5;2,5) & $-$(4,5;3,4) & $-$(4,5;3,4) \\ \hline
slice & 25 & 26 & 27& 28& 29& 30& 31& 32\\
move & $-$(4,5;3,4) & $-$(4,5;4,5) & $-$(4,5;4,5) & $-$(4,5;4,5) & $-$(4,5;4,5) & $-$(4,5;4,5) & $-$(4,5;4,5) & $-$(4,5;4,5)  \\ \hline
\end{tabular}
}
\bigskip

\noindent
\begin{itemize}
\setlength{\itemsep}{-1pt}
\item Since $(1,1)$-element of slice 1 is $+1$, we need slice 2.
\item Since the sum of $(1,2)$-elements from slice 1 and 2 is $+1$, we need slice 3. \\
$\dots$ 
\item Since the sum of $(2,2)$-elements from slice 1 through slice 6 is $-1$, we need slice 7. 
\item Since the sum of $(2,3)$-elements from slice 1 through slice 7 is $+2$, we need slice 7 and 8.\\
$\dots$
\item Since the sum of $(4,4)$-elements from slice 1 through slice 25 is $+7$, we need slice 26 through slice 32.
\end{itemize}
Therefore, this move is indispensable.
\end{remark}

\section{Discussion}
\label{sec:discussion}
In this paper we investigated a series of configurations $A_\Bb$
defined by fibers of a given base configuration $A$. We proved that the
maximum Markov degree of the configurations is bounded from above by the Markov complexity of $A$.
From our examples, the equality between the maximum Markov degree and the Markov complexity
seems to hold only in special simple cases.  As discussed after the statement of Theorem \ref{thm:main}, 
the equality holds because the set of fibers for $A_\Bb$ is a subset of fibers of the
higher Lawrence lifting $A^{(N)}$ of $A$.  The strict inequality suggests that the former is a small
subset of the latter.
In particular for the case of incidence matrix complete bipartite graphs $K_{m,n}$, the maximum Markov 
degree for $A_b$ is three independently of $m$ and $n$, whereas the Markov complexity grows at least 
polynomially in $m$ and $n$ as shown in Section 4.2. 
Hence the discrepancy is large for this case.

Another interesting topic to investigate is the dependence of the Markov degree of $A_\Bb$ on $\Bb$.
The results of Haase and Paffenholz (\cite{haase2009poly}) suggest that for generic $\Bb$,
the Markov degree of $A_\Bb$ may be smaller than the maximum Markov degree.
The result of our \ref{thmmd3} on the specific $\Bb=(2,2,2)$ suggests that 
this may a general phenomenon.

\section*{Acknowledgment}
This work is partially supported by Grant-in-Aid for JSPS Fellows (No. 12J07561) from Japan Society for the Promotion of Science (JSPS).  We are grateful to Hidefumi Ohsugi 
for very useful suggestions and a proof mentioned in Remark \ref{rem:ohsugi}.

\bibliographystyle{abbrv}
\bibliography{fiber-configuration}

\end{document}